\documentclass[reqno]{amsart}
\usepackage[margin=1.15in]{geometry}
\usepackage{amssymb}
\usepackage[backref=page]{hyperref}
\usepackage{cite}
\usepackage{enumitem}
\theoremstyle{plain}
\newtheorem{theorem}{Theorem}
\newtheorem{proposition}[theorem]{Proposition}
\newtheorem{corollary}[theorem]{Corollary}
\newtheorem{lemma}[theorem]{Lemma}
\theoremstyle{definition}
\newtheorem{definition}[theorem]{Definition}
\theoremstyle{remark}
\newtheorem{remark}[theorem]{Remark}
\allowdisplaybreaks

\newcommand{\R}{\mathbb{R}}
\newcommand{\N}{\mathbb{N}}
\newcommand{\eps}{\varepsilon}
\newcommand{\dist}{\operatorname{dist}}
\newcommand{\supp}{\operatorname{supp}}
\newcommand{\Lap}[1]{\Delta_{#1}}

\begin{document}
% authors
\author{Phuong Le}
\address{Phuong Le$^{1,2}$ (ORCID: 0000-0003-4724-7118)\newline
	$^1$Faculty of Economic Mathematics, University of Economics and Law, Ho Chi Minh City, Vietnam; \newline
	$^2$Vietnam National University, Ho Chi Minh City, Vietnam}
\email{phuongl@uel.edu.vn}
% classifications
\subjclass[2020]{35B51, 35J92, 35B65, 35B50, 35J70}
\keywords{$p$-Laplace equations; strong comparison principle; weak Harnack inequality;
weighted Sobolev inequalities; Morrey estimates; critical set; Moser iteration}
% other infos
%\date{March 13, 2021}
%\dedicatory{Dedicated to our adviser}
%\thanks{}
% title & abstract
\title[An improved strong comparison principle]{An improved strong comparison principle for
singular $p$-Laplace equations, with applications}

\begin{abstract}
We consider positive weak solutions of $-\Delta_p u=f(u)$, with $f$ positive and locally Lipschitz
continuous. In the singular case $1<p<2$, the Harnack-type inequalities, the strong maximum
principle for the linearized operator, the description of the critical set and the strong comparison
principle established by Damascelli and Sciunzi [Calc.\ Var.\ Partial Differential Equations
\textbf{25} (2006), 139--159] have been available, for more than twenty years, only under the
restriction $\frac{2N+2}{N+2}<p<2$, a range that shrinks to the empty set as $N\to\infty$. We remove
the dependence on the dimension and prove all of these results for $\frac32<p<2$.

The new ingredient is a uniform Morrey estimate for negative powers of the gradient: for every
$0\le\sigma<p-1$ there is $\kappa>0$ with
$\int_{B_r(x_0)}|Du|^{-\sigma}dy\le C\,r^{N-2+\kappa}$ for \emph{all} centres $x_0$ and all small
$r$, whereas the Riesz-potential bounds in the literature stop at $N-2$.
Combined with a trace inequality of Adams' type it yields a Sobolev inequality carrying the singular
weight $\rho=|Du|^{p-2}$ on \emph{both} sides, so that Moser's iteration can be run with respect to
$\rho\,dx$ rather than the Lebesgue measure. This replaces the requirement $\rho\in L^{t}$, $t>N/2$,
exactly what forced the exponent $\frac{2N+2}{N+2}$, by the local integrability of $\rho$, which
holds precisely for $p>\frac32$. We also show that $\frac32$ is optimal for
\emph{any} argument based on the weighted space $H^{1,2}_{\rho}$: already for $-\Delta_pu=1$ on an
annulus, with zero Dirichlet data, the critical set is a smooth hypersurface and $\rho\notin L^1$
whenever $1<p\le\frac32$; and we isolate an abstract Morrey criterion which, for instance,
gives the strong comparison principle in a ball for every $p>\frac43$.

Since the strong comparison principle is used as a black box throughout the qualitative theory of
$p$-Laplace equations, the improvement propagates. We show in particular that the assumption $\frac{2N+2}{N+2}<p<2$ may
be replaced by $\frac32<p<2$ in the resolution of Gibbons' conjecture for $-\Delta_pu=f(u)$ by
Esposito, Farina, Montoro and Sciunzi [Math.\ Ann.\ \textbf{382} (2022), 943--974] and in their
monotonicity theorem in half-spaces for changing-sign nonlinearities [Calc.\ Var.\ Partial
Differential Equations \textbf{61} (2022), art.\ 154]; further applications are given in the body of
the paper.
\end{abstract}
\maketitle
%\tableofcontents

%%%%%%%%%%%%%%%%%%%%%%%%%%%%%%%%%%%%%%%%%%%%%%%%%%%%%%%%%%%%%%%%%%%%%%%%%%%%%%%
\section{Introduction and statement of the results}
%%%%%%%%%%%%%%%%%%%%%%%%%%%%%%%%%%%%%%%%%%%%%%%%%%%%%%%%%%%%%%%%%%%%%%%%%%%%%%%

Let $\Omega\subseteq\R^N$, $N\ge2$, be a domain and let $1<p<\infty$. We consider weak solutions
$u\in C^1(\Omega)$ of
\begin{equation}\label{eq:main}
-\Lap{p}(u)=f(u),\qquad u>0\quad\text{in }\Omega,
\end{equation}
where $\Lap{p}(u)=\operatorname{div}(|Du|^{p-2}Du)$ is the $p$-Laplace operator and $f$ satisfies
\begin{itemize}[leftmargin=2.2em]
\item[$(F)$] $f:[0,\infty)\to\R$ is continuous, positive on $(0,\infty)$ and locally Lipschitz
continuous in $(0,\infty)$.
\end{itemize}
When a boundary condition is relevant we shall consider, as in \cite{DS1,DS2}, the Dirichlet problem
\begin{equation}\label{eq:dirichlet}
-\Lap{p}(u)=f(u)\ \text{ in }\Omega,\qquad u>0\ \text{ in }\Omega,\qquad u=0\ \text{ on }\partial\Omega,
\end{equation}
in a bounded smooth domain $\Omega$, with $u\in C^1(\overline\Omega)$.

The operator $\Lap{p}$ is singular ($1<p<2$) or degenerate ($p>2$) on the critical set
\begin{equation}\label{eq:Z}
Z_u:=\{x\in\Omega:\ Du(x)=0\},
\end{equation}
and solutions of \eqref{eq:main} are in general only of class $C^{1,\tau}$ \cite{DiB,Tol,Lie}. Two
of the basic tools of linear elliptic theory, the strong maximum principle for the linearized
operator and the strong comparison principle for the equation, are therefore delicate, and both
are classical only away from $Z_u$, where the operator is uniformly elliptic with continuous
coefficients and the linear theory applies, see \cite{Dam1,PucciSerrin}. Across $Z_u$ they may
genuinely fail, and a counterexample to the strong comparison principle (with
the weak one in force) is given in \cite{CT}. Partial results under structural restrictions on the
nonlinearity were obtained in \cite{CT,LP}, for sign-changing $f$ in \cite{RS}, and we refer to
\cite{Vaz} for the strong maximum principle in the quasilinear setting and to \cite{DamPac}
for the moving plane method in the singular case $1<p<2$.

\subsection{The state of the art and the restriction \texorpdfstring{$p>\frac{2N+2}{N+2}$}{p>(2N+2)/(N+2)}}
In the fundamental papers \cite{DS1,DS2}, Damascelli and Sciunzi introduced the point of view that
has dominated the subject ever since: one regards
\begin{equation}\label{eq:rho}
\rho:=|Du|^{p-2}
\end{equation}
as a \emph{weight}, and one studies the linearized operator
\begin{equation}\label{eq:linearized}
L_u(v,\varphi):=\int_\Omega\Big[|Du|^{p-2}(Dv,D\varphi)+(p-2)|Du|^{p-4}(Du,Dv)(Du,D\varphi)\Big]dx
-\int_\Omega f'(u)v\varphi\,dx
\end{equation}
in the weighted Sobolev space $H^{1,2}_{\rho}$. Since
\begin{equation}\label{eq:ellipticity}
\min\{1,p-1\}\,\rho\,|\xi|^2\ \le\ |Du|^{p-2}|\xi|^2+(p-2)|Du|^{p-4}(Du,\xi)^2\ \le\ \max\{1,p-1\}\,\rho\,|\xi|^2 ,
\end{equation}
$L_u$ is a linear operator whose ellipticity is comparable, from above and from below, with the
\emph{scalar} weight $\rho$. It is therefore a degenerate (or singular) elliptic operator of the kind
studied by Trudinger \cite{Tru1} and, in the Muckenhoupt setting, by Fabes, Kenig and Serapioni
\cite{FKS}; the difficulty is that the weight $\rho$ is not given a priori but is produced by the
solution itself. The crucial regularity input of \cite{DS1} is the summability of
negative powers of $|Du|$: for $u$ as in \eqref{eq:main} with $f$ satisfying $(F)$,
\begin{equation}\label{eq:DSsummability}
\int_{\Omega'}\frac{1}{|Du|^{(p-1)r}}\,\frac{1}{|x-y|^{\gamma}}\,dy\le C\qquad
\text{uniformly in }x ,
\end{equation}
for every $r<1$, every $\gamma<N-2$ (and $\gamma=0$ if $N=2$), and every $\Omega'\Subset\Omega$.
Using \eqref{eq:DSsummability} and the version, due to Trudinger \cite{Tru2}, of Moser's iteration
\cite{Moser}, the authors
of \cite{DS2} proved a weak Harnack inequality for $L_u$, a Harnack comparison inequality for two
solutions, a strong maximum principle for $L_u$, a description of $Z_u$ in convex symmetric domains
and, finally, the following strong comparison principle
(\cite[Theorem 1.4]{DS2}).

\medskip
\noindent\textbf{Theorem A (Damascelli--Sciunzi).}
\emph{Let $u,v\in C^1(\Omega)$, $\Omega\subset\R^N$ a bounded smooth domain, with
$\frac{2N+2}{N+2}<p<2$ or $p>2$. Assume that either $u$ or $v$ is a weak solution of
\eqref{eq:dirichlet} with $f$ satisfying $(F)$, and that}
\[
-\Lap{p}(u)+\Lambda u\le-\Lap{p}(v)+\Lambda v,\qquad u\le v \quad\text{in }\Omega ,
\]
\emph{for some $\Lambda\in\R$. Then $u\equiv v$ in $\Omega$, unless $u<v$ in $\Omega$.}
\medskip

The asymmetry between the two admissible ranges is striking. For $p>2$ the result is unrestricted;
for $1<p<2$ one is confined to
\[
\frac{2N+2}{N+2}=2-\frac{2}{N+2}<p<2 ,
\]
an interval whose length tends to $0$ as $N\to\infty$: already for $N=10$ one needs $p>1.8\overline{3}$,
and for $N=100$ one needs $p>1.98$. There is no structural reason to expect the singular
range to depend on the dimension in this way, and the restriction is universally regarded as
technical.

To the best of our knowledge, this restriction has never been relaxed. It reappears verbatim in
Sciunzi \cite[Theorems 1.3--1.4]{Sci2}, where the strong comparison principle is extended to
possibly vanishing source terms but where one of the two functions is still required to solve the
equation and the exponent is still constrained to $\frac{2N+2}{N+2}<p<\infty$; in Merchán--Montoro--Sciunzi \cite[Theorems 1.1--1.2]{MMS}, where
first-order terms are allowed; and it is inherited by all the applications of these Harnack-type
inequalities, see e.g. \cite{FMS,Gatti,Sci1} and Section \ref{sec:applications} below. The reason
is always the same, and can be read off
from \cite[Case (b) of Theorem 3.1]{MMS}: in the singular case one uses the \emph{unweighted}
Sobolev inequality on the left-hand side of the Caccioppoli inequality (which is legitimate because
$\rho\ge\lambda_0>0$ when $|Du|$ is bounded), but then the cut-off term
\[
\int_\Omega \rho\, w^2 |D\eta|^2\,dx
\]
on the right-hand side has to be treated by H\"older's inequality, and a gain of integrability is
obtained \emph{only if} $\rho\in L^t$ with
\begin{equation}\label{eq:oldcondition}
t>\frac{N}{2}.
\end{equation}
By \eqref{eq:DSsummability}, $\rho=|Du|^{-(2-p)}\in L^t_{\rm loc}$ precisely for
$t<\frac{p-1}{2-p}$, and $\frac{p-1}{2-p}>\frac N2$ if and only if $p>\frac{2N+2}{N+2}$. The
dimension enters only through \eqref{eq:oldcondition}.

\subsection{Main results}
The purpose of this paper is to show that condition \eqref{eq:oldcondition} is an artefact of the
way the Caccioppoli inequality is exploited, and that it can be replaced by a Morrey-type condition
on the measure $\rho\,dx$ which, for solutions of \eqref{eq:main}, is satisfied as soon as
$\rho\in L^{t}_{\rm loc}$ for some $t>1$, that is, as soon as the weight is locally integrable in
the mildest quantitative sense, which is the minimal requirement for the whole weighted framework to
make sense at all. We obtain the following
dimension-free statements. Throughout, $B_r(x)$ denotes the open ball of radius $r$ centred at $x$,
and $\Omega'\Subset\Omega$ means $\overline{\Omega'}$ is compact and contained in $\Omega$.

\begin{theorem}[Weak Harnack inequality for the linearized operator]\label{thm:main1}
Let $\Omega\subseteq\R^N$ be a domain, $N\ge2$, let $f$ satisfy $(F)$, let
\[
\tfrac32<p<2 ,
\]
and let $u\in C^1(\Omega)$ be a weak solution of \eqref{eq:main}. Set $\rho=|Du|^{p-2}$ and
$d\mu=\rho\,dx$. Let $\Omega'\Subset\Omega$ and $\overline{B_{6\delta}(x)}\subset\Omega'$. Then there
exist $\chi>1$ and, for every $0<s<\chi$, a constant $C>0$ such that for every nonnegative bounded
weak supersolution $v\in H^{1,2}_{\rho}(\Omega')$ of $L_u(v,\cdot)=0$,
\begin{equation}\label{eq:weakharnack1}
\Big(\int_{B_{2\delta}(x)}v^{s}\,d\mu\Big)^{1/s}\le C\,\inf_{B_{\delta}(x)}v .
\end{equation}
In particular the same inequality holds with $d\mu$ replaced by $dx$.
\end{theorem}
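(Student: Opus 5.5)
The plan is to run Trudinger's version of Moser's iteration entirely with respect to the measure $d\mu=\rho\,dx$, replacing the unweighted Sobolev inequality by a weighted one with $\rho$ on both sides. The new input is the Morrey estimate announced in the abstract: for $0\le\sigma<p-1$ there is $\kappa>0$ with $\int_{B_r(y)}|Du|^{-\sigma}\,dz\le C r^{N-2+\kappa}$, uniformly for $y\in\Omega'$ and small $r$.

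\textbf{Step 1: Morrey bound for $\mu$.} Since $p>\frac32$ we have $2-p<p-1$, so we may take $\sigma=2-p$, and also $\sigma=(2-p)t$ for some $t>1$. This gives
\[
\mu(B_r(y))\le C r^{N-2+\kappa}
\qquad\text{and}\qquad
\int_{B_r(y)}\rho^t\,dz\le C r^{N-2+\kappa},
\]
uniformly in $y$. The bound for $\rho^t$ is needed to get a doubling-type control. We also have a lower bound: $\rho\ge\lambda_0>0$ because $|Du|$ is bounded on $\Omega'$, so $\mu(B_r)\ge c r^N$.

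\textbf{Step 2: weighted Sobolev inequality.} For $\varphi\in C^1_c(B_r)$ we write $|\varphi|\le C\, I_1(|D\varphi|)$. Adams' trace inequality for Riesz potentials with respect to a measure obeying $\mu(B_r)\le Cr^{N-2+\kappa}$ gives
\[
\Big(\int|\varphi|^{2\chi}d\mu\Big)^{1/(2\chi)}\le C\Big(\int |D\varphi|^{q}dx\Big)^{1/q}
\]
for some $q<2$ and some $\chi>1$. Now split $|D\varphi|^q=(\rho^{1/2}|D\varphi|)^q\rho^{-q/2}$ and apply H\"older. This bounds the right side by $\big(\int\rho|D\varphi|^2dx\big)^{1/2}$ times $\big(\int_{B_r}\rho^{-q/(2-q)}\big)^{(2-q)/(2q)}$, and the last factor is harmless since $\rho^{-1}=|Du|^{2-p}$ is bounded. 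The result is
\[
\Big(\int|\varphi|^{2\chi}d\mu\Big)^{1/(2\chi)}\le C_r\Big(\int\rho|D\varphi|^2dx\Big)^{1/2},
\]
with $r$-dependence tracked through the Morrey exponent. Getting the exponents to close, meaning $\chi>1$ together with the scaling in $r$ needed for the iteration, is the step I expect to be the main obstacle. If the direct Adams route is too lossy, I would fall back on the pointwise bound $|\varphi(y)|\le C\int|D\varphi|\,|y-z|^{1-N}dz$, insert $\rho^{1/2}\rho^{-1/2}$, and estimate the kernel against $\mu$ using Step 1 via a Schur/Hedberg-type argument.

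\textbf{Step 3: Moser iteration.} Test $L_u(v,\cdot)\ge0$ with $\varphi=\eta^2 (v+\epsilon)^{\beta}$ for $\beta<0$, and also use the $\log$ test function. By \eqref{eq:ellipticity}, with $w=(v+\epsilon)^{(\beta+1)/2}$, we get the Caccioppoli inequality
\[
\int\eta^2\rho|Dw|^2\le C\Big(\int\rho w^2|D\eta|^2+\int|f'(u)|\,w^2\eta^2\Big).
\]
Here $f'(u)$ is bounded since $u\ge\min_{\Omega'}u>0$. The zero-order term is bounded by $C\lambda_0^{-1}\int w^2\eta^2d\mu$. Hence every term on the right is an integral against $d\mu$, and no $L^t$ condition with $t>N/2$ on $\rho$ is needed. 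Step 2 then gives the reverse H\"older gain $\|w\|_{L^{2\chi}(\mu)}\lesssim\|w\|_{L^2(\mu)}$ on shrinking balls between $B_\delta$ and $B_{2\delta}$.

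\textbf{Step 4: crossover and conclusion.} Iterating with negative exponents yields $\inf_{B_\delta}v\ge c\,(\int_{B_{3\delta/2}}v^{-s_0}d\mu)^{-1/s_0}$, and iterating with small positive exponents controls $(\int v^s d\mu)^{1/s}$ for $s<\chi$ by a small positive power. The two are linked by a John--Nirenberg lemma for $\log v$ with respect to $\mu$. The BMO bound for $\log v$ comes from the log test function, the weighted Poincar\'e inequality from Step 2, and the doubling/Morrey control of $\mu$ on the balls inside $B_{6\delta}$. Alternatively, the crossover can be done with the Bombieri--Giusti lemma, which only needs the measure bounds from Step 1. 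Finally, since $\rho\ge\lambda_0$, we have $dx\le\lambda_0^{-1}d\mu$, which gives the unweighted version of the inequality.
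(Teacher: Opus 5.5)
Steps 1--3 are essentially the paper's argument. These steps are the Morrey estimate with $\sigma=2-p<p-1$, Adams' trace inequality to put $\mu$ on the left, $\rho\ge\lambda_0$ to put $\mu$ on the right, and the Caccioppoli inequality for $\eta^2(v+\varsigma)^{\beta}$, in which every term becomes a $\mu$-integral. Adams' theorem needs the Morrey bound for all centres and all radii. You get this by restricting $\mu$ to a subdomain $\Omega''$ and using $\mu(\Omega'')<\infty$ for large $r$; your proposal should state this step.

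The ``scaling in $r$'' you worry about in Step 2 is not needed. The constant in \eqref{eq:weakharnack1} may depend on $\delta$. So one Sobolev inequality for functions supported in $\Omega'$, with a single constant, suffices, and only the cut-off factor $(h''-h')^{-2}$ enters the iteration. For $N\ge3$ you can use exponent $2$ in Adams' theorem directly, which gives $\chi=1+\frac{\kappa}{N-2}$; the H\"older step with $q<2$ is only needed when $N=2$.

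The real difference from the paper is the crossover, and there your first option does not work as stated. John--Nirenberg with respect to $\mu$ needs two things. First, $\mu$ must be doubling. Second, $\log v$ must satisfy a BMO bound on \emph{every} sub-ball $B_r(y)\subset B_{5\delta}(x)$, which requires a scale-invariant weighted Poincar\'e inequality $\int_{B_r}|g-c|^2d\mu\lesssim r^2\int_{B_r}\rho|Dg|^2dx$. Neither is available:
\begin{itemize}
\item All your Morrey bounds, for $\rho$ and for $\rho^t$, are upper bounds $\lesssim r^{N-2+\kappa}$, while the only lower bound is $\mu(B_r)\ge\lambda_0|B_r|$. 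So $\mu(B_{2r})/\mu(B_r)$ is controlled only by $Cr^{\kappa-2}$, and the $\rho^t$ estimate gives no doubling-type control; it is superfluous.
\item The Adams-based Poincar\'e inequality discards the weight on the right through $\rho\ge\lambda_0$, so it is not of the required scale-invariant form.
\end{itemize}

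Your fallback, the Bombieri--Giusti lemma, does work, because it only uses data on fixed nested balls. Its inputs are:
\begin{itemize}
\item the small-positive and negative-exponent chains, whose constants have the admissible form $(C/(h''-h'))^{c/s}$;
\item a weak-$L^1(\mu)$ bound for $\log v-c$ on one ball, which follows from the logarithmic Caccioppoli inequality and the fixed-ball Poincar\'e inequality \eqref{eq:sobolev-mean}.
\end{itemize}

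The paper instead follows Moser--Trudinger. It proves a Caccioppoli inequality for powers $|\log(w/k)|^{\beta}$ and iterates it to get $\|\log(w/k)\|_{L^m(B_{5\delta/2},\mu)}\le C(1+m)$. It then sums the exponential series to obtain $\int w^{r_0}d\mu\int w^{-r_0}d\mu\le C$. Both routes avoid doubling. Bombieri--Giusti saves the extra Caccioppoli inequality for log-powers, while the paper's route is self-contained.
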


\begin{theorem}[Strong maximum principle for the linearized operator]\label{thm:main2}
Let $\Omega$, $f$, $p$ and $u$ be as in Theorem \ref{thm:main1}. Let $v\in H^{1,2}_{\rho,\rm loc}(\Omega)\cap C^0(\Omega)$
be a weak supersolution of $L_u(v,\cdot)=0$. Then for every \emph{connected} subdomain
$\Omega_0\subseteq\Omega$ with $v\ge0$ in $\Omega_0$ we have either $v\equiv0$ in $\Omega_0$ or
$v>0$ in $\Omega_0$.

In particular, for every $i\in\{1,\dots,N\}$ and every connected subdomain $\Omega_0$ with
$u_{x_i}\ge0$ in $\Omega_0$, either $u_{x_i}\equiv0$ or $u_{x_i}>0$ in $\Omega_0$.
\end{theorem}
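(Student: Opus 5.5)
The plan is the standard connectedness argument built on the weak Harnack inequality of Theorem \ref{thm:main1}. Fix a connected subdomain $\Omega_0$ with $v\ge0$ in $\Omega_0$. Consider the zero set
\[
Z:=\{x\in\Omega_0:\ v(x)=0\}.
\]
It is relatively closed in $\Omega_0$ because $v$ is continuous. If we show that $Z$ is also open, connectedness gives either $Z=\emptyset$ (so $v>0$) or $Z=\Omega_0$ (so $v\equiv0$).

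For openness, take $x_0\in Z$ and choose $\delta>0$ so small that $\overline{B_{6\delta}(x_0)}\subset\Omega'\Subset\Omega_0$ for some smooth $\Omega'$. On $\Omega'$ the function $v$ is bounded, since it is continuous on $\overline{\Omega'}$. It is nonnegative, and it lies in $H^{1,2}_\rho(\Omega')$ by the local membership assumption. It is also a weak supersolution of $L_u(v,\cdot)=0$ there, because restricting the test functions preserves the supersolution inequality. Theorem \ref{thm:main1} with, say, $s=1<\chi$ then gives
\[
\int_{B_{2\delta}(x_0)}v\,dx\le C\inf_{B_\delta(x_0)}v=C\,v(x_0)\cdot0\ \text{-type bound}=0,
\]
more precisely the right-hand side is $C\inf_{B_\delta(x_0)}v\le C\,v(x_0)=0$. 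Since $v\ge0$ and $v$ is continuous, $v\equiv0$ on $B_{2\delta}(x_0)$, so $Z$ is open.

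One point needs care. Theorem \ref{thm:main1} is stated for supersolutions in $H^{1,2}_\rho(\Omega')$ with the test class on $\Omega'$, so we must check that the local hypothesis $v\in H^{1,2}_{\rho,\rm loc}(\Omega)$ meets it. We take $\Omega'$ to be a smooth subdomain compactly contained in $\Omega_0$, and use the dx-version of \eqref{eq:weakharnack1}. Continuity then upgrades the conclusion from almost everywhere to pointwise.

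For the second assertion we apply the first with $v=u_{x_i}$. Here $u_{x_i}\in C^0(\Omega)$, and we need two facts:
\begin{itemize}[leftmargin=2.2em]
\item[(i)] $u_{x_i}\in H^{1,2}_{\rho,\rm loc}(\Omega)$;
\item[(ii)] $u_{x_i}$ is a weak (super)solution of the linearized equation $L_u(u_{x_i},\varphi)=0$.
\end{itemize}
Both are regularity results from \cite{DS1}: second derivatives satisfy $\int\rho|D^2u|^2<\infty$ locally, and differentiating the equation gives $L_u(u_{x_i},\cdot)=0$. These hold for every $1<p<2$ and do not depend on the $\frac{2N+2}{N+2}$ restriction; I would verify this in the cited proofs. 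This verification is the main potential obstacle. The summability of $\rho|D^2u|^2$ for $p$ close to $\frac32$, and the justification of the linearization across $Z_u$, must not secretly use $\rho\in L^t$ with $t>N/2$.

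I expect them not to. In \cite{DS1} they follow from difference-quotient estimates with test functions of the form $|Du|^{\ldots}$ regularized, which use only $f$ locally Lipschitz and the $C^{1,\tau}$ regularity of $u$. Since $u>0$ and $u$ is bounded on compact subsets, $f'(u)$ is locally bounded, so the zero-order term is harmless.
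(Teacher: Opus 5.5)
Your proposal is correct and follows the paper's proof: the zero set $\{v=0\}\cap\Omega_0$ is closed by continuity and open by the weak Harnack inequality of Theorem~\ref{thm:main1}, applied on balls $B_{6\delta}(x)$ contained in $\Omega_0$; the second assertion then follows because $u_{x_i}$ satisfies $L_u(u_{x_i},\cdot)=0$ by \eqref{eq:linear-eq}. The point you flag as the main obstacle is settled in the paper by citing Theorem~\ref{thm:DSlin}. Its statements are purely local and hold for every $1<p<2$, so there is no hidden use of $\rho\in L^t$ with $t>N/2$.
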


\begin{theorem}[Strong comparison principle]\label{thm:main4}
Let $\Omega\subseteq\R^N$ be a domain, $N\ge2$, and let $\frac32<p<2$.
Let $u,v\in C^1(\Omega)$ and assume that either $u$ or $v$ is a weak solution of \eqref{eq:main}
with $f$ satisfying $(F)$. Assume that, for some $\Lambda\in\R$,
\begin{equation}\label{eq:scp-hyp}
-\Lap{p}(u)+\Lambda u\ \le\ -\Lap{p}(v)+\Lambda v,\qquad u\le v\quad\text{in }\Omega
\end{equation}
in the weak distributional sense. Then, in every connected subdomain $\Omega_0\subseteq\Omega$,
either $u\equiv v$ or $u<v$.

The same conclusion holds if \eqref{eq:scp-hyp} is replaced by
$-\Lap{p}(u)-f(u)\le-\Lap{p}(v)-f(v)$ and $u\le v$ in $\Omega$.
\end{theorem}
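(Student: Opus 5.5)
Following \cite{DS2}, we prove Theorem~\ref{thm:main4} by a Harnack comparison inequality for the difference $w:=v-u\ge0$, now run with the weighted measure $d\mu=\rho\,dx$, exactly as in the proof of Theorem~\ref{thm:main1}. Fix a connected $\Omega_0$ and suppose $w(x_0)=0$ at some $x_0\in\Omega_0$. Since $w$ is continuous, the set $\{w=0\}\cap\Omega_0$ is relatively closed. By connectedness it suffices to show it is open. So we fix $x_0$ with $w(x_0)=0$ and aim to prove $w\equiv0$ on a small ball $B_\delta(x_0)$.

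Away from $Z_u\cup Z_v$ the operator is uniformly elliptic, and the classical strong comparison principle applies \cite{Dam1,PucciSerrin}. So the interesting case is $Du(x_0)=Dv(x_0)=0$. If only one gradient vanishes there, the other is nonzero nearby and the difference quotient below is still uniformly elliptic. Assume, say, that $u$ solves \eqref{eq:main}.

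\textbf{Step 1: linear inequality for $w$.} Write
\[
|Dv|^{p-2}Dv-|Du|^{p-2}Du=A(x)Dw,\qquad A(x)=\int_0^1 \big|D u_t\big|^{p-2}\Big(I+(p-2)\tfrac{Du_t\otimes Du_t}{|Du_t|^2}\Big)dt,
\]
with $u_t=u+t(v-u)$. From \eqref{eq:scp-hyp}, $w$ is a weak supersolution of $-\operatorname{div}(A Dw)+\Lambda w\ge0$; in the variant hypothesis the term $\Lambda w$ is replaced by $-c(x)w$, with $c$ bounded by the Lipschitz constant of $f$ near $u(x_0)>0$. Since $1<p<2$, the coefficients satisfy $A\ge (p-1)\int_0^1|Du_t|^{p-2}dt\,I$, and this is bounded below by $c\,(|Du|+|Dv|)^{p-2}I\ge c\,\rho_{u}$-comparable quantities only up to the issue that the upper bound is \emph{not} controlled by $\rho=|Du|^{p-2}$.

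\textbf{Step 2: the upper bound, which is the main obstacle.} The standard fix from \cite{DS2} is the estimate
\[
\int_0^1|Du_t|^{p-2}dt\le C\,(|Du|+|Dv|)^{p-2}\le C\,|Du|^{p-2}\quad\text{when }p<2 .
\]
This holds because $\int_0^1|a+t(b-a)|^{p-2}dt\simeq(|a|+|b|)^{p-2}$ for $p>1$. Thus $c_1(|Du|+|Dv|)^{p-2}I\le A\le C\rho I$. The degeneracy of the lower bound is the real difficulty: we need ellipticity from below comparable to $\rho$.

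On the weak weighted side, since $|Dv|$ is locally bounded, $(|Du|+|Dv|)^{p-2}\ge c>0$. We therefore run the Moser iteration with the weight $\rho$ appearing only where it does in Theorem~\ref{thm:main1}. The Caccioppoli inequality for $w^{\beta}$ gives
\[
\int \eta^2 |D(w^{\beta})|^2\,dx\lesssim\int\rho\, w^{2\beta}|D\eta|^2\,dx+\int w^{2\beta}\eta^2\,dx .
\]
The cutoff term on the right is handled exactly as in Theorem~\ref{thm:main1}. There, the Morrey estimate $\int_{B_r}|Du|^{-\sigma}\le Cr^{N-2+\kappa}$ with $\sigma=2-p<p-1$ (this is where $p>\frac32$ enters), combined with the Adams-type trace inequality, controls $\int\rho\,\varphi^2$ by $\int|D\varphi|^2$ with a gain of exponent. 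If instead one insists on the two-sided weighted inequality, we use the alternative from \cite{DS2}: compare with the solution $u$ only, i.e.\ bound $A\ge c\,\rho$ on the set where $|Dv|\le 2|Du|$, and use that $w$ is a supersolution there. I expect this is where care is needed, and I would first try the unweighted-left/weighted-right route just described, since it requires only the Morrey bound for $\rho$ itself.

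\textbf{Step 3: conclusion.} The iteration yields a weak Harnack inequality $\big(\int_{B_{2\delta}}w^s\,dx\big)^{1/s}\le C\inf_{B_\delta}w=C\,w(x_0)=0$. Hence $w\equiv0$ on $B_{2\delta}(x_0)$, so the zero set is open, and connectedness of $\Omega_0$ gives $u\equiv v$ there, unless $u<v$ throughout $\Omega_0$.
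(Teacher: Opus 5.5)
Although you announce that the iteration is run in $d\mu=\rho\,dx$, Step~2 does not do this. The scheme it describes instead is the one that fails. You keep Lebesgue norms on the left of the Caccioppoli inequality, propose to ``handle'' the cut-off term $\int\rho\,w^{2\beta}|D\eta|^2dx$ on the right by the trace inequality, and end with a weak Harnack inequality in $dx$.

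Applying $\int\rho\varphi^2\lesssim\int|D\varphi|^2$ to $\varphi=w^{\beta}|D\eta|$ produces $\int|D\eta|^2|D(w^{\beta})|^2$ on the annulus where $D\eta\neq0$. The left-hand side $\int\eta^2|D(w^{\beta})|^2$ does not control this, and a larger cut-off only reproduces the same term one scale out. The other option, bounding the cut-off term by a Lebesgue norm of $w^{2\beta}$ weaker than the one gained on the left, forces H\"older with $\rho\in L^t$, $t>N/2$. That is exactly the old restriction $p>\frac{2N+2}{N+2}$.

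The missing idea is to use the trace inequality on the \emph{left}-hand side. Applied to $\varphi=\eta w^{\beta}$, and combined with $\rho\ge\lambda_0$ on the right, it gives
\[
\|\eta w^{\beta}\|_{L^{q}(\mu)}^{2}\ \le\ C\int|D(\eta w^{\beta})|^{2}\,dx\ \le\ C_\beta\,\big\|w^{\beta}(\eta+|D\eta|)\big\|_{L^{2}(\mu)}^{2},\qquad q>2 ,
\]
with $C_\beta$ polynomial in $|\beta|$. Every norm in the scheme is then an $L^s(\mu)$ norm, and the reverse H\"older chain closes with $\chi=q/2$. The weak Harnack inequality is proved in $\mu$, and the $dx$ version follows from $d\mu\ge\lambda_0\,dx$.

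Your outline also omits the crossover from negative to positive exponents, which the strong comparison principle needs. Here too the logarithmic Caccioppoli estimate must be paired with a Poincar\'e inequality carrying $\mu$ on the left. It also needs an exponential-integrability bound for $\log w_\varsigma$ that is uniform in $\varsigma$, as in the logarithmic step of the proof of Theorem~\ref{thm:abstract}.

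You also misidentify the obstacle. The comparison $\int_0^1|a+t(b-a)|^{p-2}dt\simeq(|a|+|b|)^{p-2}$ that you quote (or directly \eqref{eq:vector}--\eqref{eq:vector2}) shows that $A$ is comparable \emph{from both sides} to $\tilde\rho:=(|Du|+|Dv|)^{p-2}$. This is the weight to use, so there is no degenerate lower bound relative to $|Du|^{p-2}$ to repair. On compact subsets $\tilde\rho\ge(\|Du\|_\infty+\|Dv\|_\infty)^{p-2}>0$. Since $\tilde\rho\le|Du|^{p-2}$ pointwise ($u$ being the solution), $\tilde\rho$ inherits the uniform Morrey bound of Theorem~\ref{thm:morrey} with $\sigma=2-p$, which is all Adams' inequality requires.

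The proposed alternative of restricting to $\{|Dv|\le2|Du|\}$ does not help. At a touching point $x_0$ one has $Dw(x_0)=0$, hence $Du(x_0)=Dv(x_0)$, so your case ``only one gradient vanishes'' never occurs either. In the interesting case $Du(x_0)=Dv(x_0)=0$, that set need not contain any ball around $x_0$, whereas the iteration requires full balls.
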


\begin{theorem}[Structure of the critical set]\label{thm:main3}
Let $\Omega\subset\R^N$ be a bounded smooth domain, $N\ge2$, let $\frac32<p<2$, let $f$ be positive
and locally Lipschitz continuous on $[0,\infty)$, and let $u\in C^1(\overline\Omega)$ be a weak
solution of \eqref{eq:dirichlet}. If $\Omega$ is convex in the directions $e_1,\dots,e_N$ and
symmetric with respect to the hyperplanes $\{x_i=0\}$, $i=1,\dots,N$, then
\[
Z_u=\{x\in\Omega:\ Du(x)=0\}=\{0\}, \qquad\text{and consequently}\qquad u\in C^2(\Omega\setminus\{0\}).
\]
\end{theorem}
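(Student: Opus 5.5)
We follow the scheme of Damascelli--Sciunzi \cite{DS2}: the only place where $p>\frac{2N+2}{N+2}$ enters their argument is through the strong maximum principle for the linearized operator. We replace it by Theorem \ref{thm:main2}, which is valid for $\frac32<p<2$.

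\emph{Step 1: monotonicity.} By the moving plane method for singular $p$-Laplace equations \cite{DamPac}, $u$ is symmetric with respect to each hyperplane $\{x_i=0\}$. This step needs $f$ positive and locally Lipschitz on $[0,\infty)$, and the method works for every $1<p<2$. It also gives $u_{x_i}\ge0$ in $\Omega_i^-:=\{x\in\Omega:\ x_i<0\}$. Each $\Omega_i^-$ is connected, since $\Omega$ is convex in the direction $e_i$ and symmetric.

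\emph{Step 2: strict monotonicity.} The derivative $u_{x_i}$ lies in $H^{1,2}_{\rho,\rm loc}(\Omega)\cap C^0(\Omega)$ and solves $L_u(u_{x_i},\cdot)=0$. This is the regularity of \cite{DS1}, which rests on the summability \eqref{eq:DSsummability} and is available for all $1<p<2$. By Theorem \ref{thm:main2}, either $u_{x_i}\equiv0$ in $\Omega_i^-$ or $u_{x_i}>0$ in $\Omega_i^-$. The first alternative is impossible. It would make $u$ constant along every segment in the $e_i$ direction inside $\Omega_i^-$. Since $u=0$ on $\partial\Omega$, this gives $u\equiv0$ near boundary points of $\Omega_i^-$ lying on $\partial\Omega$, contradicting $u>0$. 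Hence $u_{x_i}>0$ in $\Omega_i^-$, and by symmetry $u_{x_i}<0$ in $\Omega_i^+$.

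\emph{Step 3: identifying $Z_u$.} Let $x\in\Omega$ with $x\neq0$. Some coordinate satisfies $x_i\ne0$, so $u_{x_i}(x)\neq0$ and $x\notin Z_u$. Conversely, $u_{x_i}$ is odd in $x_i$ by symmetry and continuous, so $u_{x_i}(0)=0$ for every $i$, and $0\in Z_u$. Therefore $Z_u=\{0\}$.

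\emph{Step 4: regularity.} On $\Omega\setminus\{0\}$ the gradient does not vanish, so locally $|Du|\ge c>0$. There the equation is uniformly elliptic with $C^{0,\tau}$ coefficients. Standard elliptic regularity, via difference quotients and Schauder theory applied to the equations satisfied by the derivatives of $u$, gives $u\in C^{2}(\Omega\setminus\{0\})$.

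The main obstacle is Step 2: justifying that $u_{x_i}$ is an admissible supersolution in the weighted space, and getting connectedness of $\Omega_i^-$ together with the boundary argument. The first point is exactly where earlier work was restricted. Here it requires $\rho\in L^1_{\rm loc}$, which holds because $p>\frac32$, and this is what lets Theorem \ref{thm:main2} be applied.
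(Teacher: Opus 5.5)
Your proof is correct and follows the paper's route. Monotonicity comes from the moving plane method; the paper cites \cite[Theorem 2.3, Corollary 2.1]{DS2} on the optimal caps $\Omega^{\pm e_i}_{0}$. Then Theorem \ref{thm:main2} is applied to $u_{x_i}$, with the boundary condition ruling out $u_{x_i}\equiv0$, so $Z_u\subset\{0\}$.

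The differences are cosmetic. The paper works on each connected component of the cap rather than proving $\Omega_i^-$ connected, and it gets $Z_u\neq\emptyset$ from the interior maximum rather than from the oddness of $u_{x_i}$. One correction to your closing remark: the old range restriction entered through the weak Harnack inequality underlying Theorem \ref{thm:main2}, not through the admissibility of $u_{x_i}$, which \cite{DS1} provides for every $p>1$.
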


Theorems \ref{thm:main1}--\ref{thm:main3} improve, respectively, Theorems 1.1, 1.2, 1.4 and 1.3 of
\cite{DS2}: the range $\frac{2N+2}{N+2}<p<2$ is replaced by $\frac32<p<2$ in every dimension. We have stated
all four results only in the singular range, since for $p>2$ the corresponding statements are those
of \cite[Theorems 1.1--1.4]{DS2} and already carry no restriction on the exponent; the degenerate
case is not touched here. Since
$\frac{2N+2}{N+2}=\frac32$ if and only if $N=2$, the new threshold is exactly the one that
\cite{DS2} obtained in the plane, now valid in all dimensions. Moreover Theorems \ref{thm:main1},
\ref{thm:main2} and \ref{thm:main4} are stated for \emph{local} solutions on an arbitrary domain,
with no boundary condition and no boundedness assumption on $\Omega$; this is a further (minor)
improvement over \cite{DS2}, where the Dirichlet problem in a bounded smooth domain is assumed
throughout.

\subsection{The mechanism, and an abstract criterion}
The proof of Theorems \ref{thm:main1}--\ref{thm:main3} rests on two facts. The first is a new
regularity estimate, which we believe to be of independent interest.

\begin{theorem}[Uniform Morrey estimate for negative powers of the gradient]\label{thm:morrey}
Let $\Omega\subseteq\R^N$ be a domain, $N\ge2$, $1<p<2$, let $f$ satisfy $(F)$ and let
$u\in C^1(\Omega)$ be a weak solution of \eqref{eq:main}. Let $\Omega''\Subset\Omega'\Subset\Omega$
and let $0\le\sigma<p-1$. Then there exist $\kappa>0$, $\bar r>0$ and $C>0$ such that
\begin{equation}\label{eq:morrey}
\int_{B_r(x_0)}\frac{dy}{|Du(y)|^{\sigma}}\ \le\ C\,r^{\,N-2+\kappa}
\qquad\text{for every }x_0\in\Omega''\text{ and every }0<r<\bar r .
\end{equation}
One may take $\bar r=\min\{1,\frac18\dist(\Omega'',\partial\Omega')\}$ and
$\kappa=\min\{1,\tau(p-\beta)\}$, where $\tau\in(0,1]$ is the H\"older exponent of $Du$ on
$\overline{\Omega'}$ and $\beta:=\max\{0,\sigma+2-p\}<1$; the constant $C$ depends only on
$N,p,\sigma$ and on
\[
\|Du\|_{L^{\infty}(\Omega')},\qquad [Du]_{C^{0,\tau}(\overline{\Omega'})},\qquad
\min_{\overline{\Omega'}}f(u),\qquad \sup_{\overline{\Omega'}}|f'(u)| .
\]
\end{theorem}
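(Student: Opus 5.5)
The plan is to combine the equation, tested against a negative power of the gradient, with a localized weighted Hessian estimate, and to extract the gain $\kappa$ from the H\"older continuity of $Du$ near $Z_u$. Fix $x_0\in\Omega''$ and $0<r<\bar r$, so that $B_{4r}(x_0)\subset\Omega'$. Let $\psi\in C^\infty_c(B_{2r}(x_0))$ satisfy $0\le\psi\le1$, $\psi=1$ on $B_r(x_0)$ and $|D\psi|\le C/r$. Set $\beta=\max\{0,\sigma+2-p\}$. We take for granted the regularity from \cite{DS1}: $u\in W^{2,2}_{\rm loc}$, and $|Du|^{p-2-\beta}|D^2u|^2\in L^1_{\rm loc}$ for $\beta<1$. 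All computations are done with $|Du|$ replaced by $|Du|+\eps$ and then $\eps\to0$ by Fatou's lemma.

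\emph{Step 1 (case split).} Let $m=\min_{\overline{B_{4r}(x_0)}}|Du|$. If $m\ge r^\tau$, then directly
\[
\int_{B_r}|Du|^{-\sigma}\le Cr^{N-\tau\sigma}\le Cr^{N-2+\kappa},
\]
because $\tau\sigma<1$ and $\kappa\le1$. Otherwise $Du$ is H\"older continuous and $|Du|\le r^\tau$ at some point of $B_{4r}$, so $|Du|\le Cr^\tau$ on $B_{4r}$. Only this second case needs work.

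\emph{Step 2 (weighted Hessian estimate in Morrey form).} Differentiate the equation and test the linearized equation for $u_{x_i}$ with $u_{x_i}\psi^2(\eps+|Du|)^{-\beta}$. Using \eqref{eq:ellipticity} and the identity $\sum_i u_{x_i}D u_{x_i}=|Du|\,D|Du|$, the term generated by differentiating $(\eps+|Du|)^{-\beta}$ has a good sign up to a factor $(1-\beta)$; this is where $\beta<1$ is used. Summing over $i$ and absorbing the cross terms by Young's inequality gives
\[
H:=\int\psi^2|Du|^{p-2-\beta}|D^2u|^2\le C\int_{B_{2r}}|Du|^{p-\beta}|D\psi|^2+C\sup|f'(u)|\int_{B_{2r}}|Du|^{2-\beta}.
\]
In the second case of Step 1 the first term is at most $C\,r^{\tau(p-\beta)}r^{N-2}$, and the second is at most $Cr^N$. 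Hence $H\le Cr^{N-2+\kappa}$.

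\emph{Step 3 (testing the equation).} Since $f(u)\ge c_0:=\min_{\overline{\Omega'}}f(u)>0$, we test the equation with $\psi^2(\eps+|Du|)^{-\sigma}$. This gives
\[
c_0\,I_\psi:=c_0\int\psi^2|Du|^{-\sigma}\le \int|Du|^{p-1-\sigma}2\psi|D\psi|+\sigma\int\psi^2|Du|^{p-2-\sigma}|D^2u|.
\]
The first term is at most $Cr^{N-1}\le Cr^{N-2+\kappa}$, because $p-1-\sigma>0$ and $|Du|$ is bounded. In the second term we write $|Du|^{p-2-\sigma}=|Du|^{(p-2-\beta)/2}\,|Du|^{(p-2-2\sigma+\beta)/2}$. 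When $\beta=\sigma+2-p$ one has $p-2-2\sigma+\beta=-\sigma$. When $\beta=0$ one has $p-2-2\sigma\ge-\sigma$ up to a bounded factor, since $\sigma+2-p\le0$ in that case. Cauchy--Schwarz therefore gives
\[
\int\psi^2|Du|^{p-2-\sigma}|D^2u|\le C\,H^{1/2}I_\psi^{1/2}.
\]
By Young's inequality we absorb $\tfrac{c_0}{2}I_\psi$ into the left-hand side and obtain $I_\psi\le C(r^{N-2+\kappa}+H)\le Cr^{N-2+\kappa}$. This yields \eqref{eq:morrey}, with constants depending only on the quantities listed in the statement.

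The main obstacle is Step 2. We must verify that the linearized identity for $u_{x_i}$ can be tested against $u_{x_i}\psi^2(\eps+|Du|)^{-\beta}$ with every term controlled uniformly in $\eps$: the singular coefficient $|Du|^{p-4}$ appears, and the sign argument requires $\beta<1$. We must also ensure that localization to $B_{2r}$ produces only the cut-off term, which carries the decisive factor $|Du|^{p-\beta}\le Cr^{\tau(p-\beta)}$. We would follow the regularization scheme of \cite{DS1}, working away from $Z_u$ with $G_\eps$-type cut-offs and passing to the limit.
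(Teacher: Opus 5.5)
Your proposal is correct and follows essentially the paper's proof. It uses the same H\"older-based dichotomy on the size of $|Du|$ near $x_0$ and the same localized weighted Hessian estimate, whose cut-off term $r^{-2}\int_{B_{2r}}|Du|^{p-\beta}$ is small in the degenerate case. It also tests the equation with $\psi^2$ times a regularized $|Du|^{-\sigma}$ and closes with Cauchy--Schwarz at $\beta=\sigma+2-p$ followed by absorption. The only variation is your Hessian test function $\psi^2u_{x_i}(\eps+|Du|)^{-\beta}$, summed over $i$, where the paper uses $\psi^2T_\eps(u_{x_i})$ with $T_\eps(t)\approx t|t|^{-\beta}$ and then $|u_{x_i}|^{-\beta}\ge|Du|^{-\beta}$; this works, needs no $G_\eps$ cut-offs, and requires stating the Kato-type bound $(A\,D|Du|,D|Du|)\le\sum_i(A\,Du_{x_i},Du_{x_i})$, which is Cauchy--Schwarz in the $A$-inner product. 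Also, for $1<p<2$ and $\sigma\ge0$ the case $\beta=0$ never occurs, so that branch of your Step 3 is vacuous.
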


The two subdomains play different roles: the estimate is asserted for centres $x_0$ in the inner
subdomain $\Omega''$, while $\Omega'$ is the (slightly larger) set on which the four quantities above
are measured. The restriction $r<\bar r$ guarantees that
$\overline{B_{4r}(x_0)}\subset\Omega'$ for every $x_0\in\Omega''$, so that the whole argument takes
place inside $\Omega'$; in particular the ball $B_r(x_0)$ in \eqref{eq:morrey} is always contained in
$\Omega'$.

The point is the \emph{exponent} $N-2+\kappa>N-2$, and the \emph{uniformity in the centre $x_0$}.
Estimate \eqref{eq:DSsummability} gives, by taking $y=x_0$ and bounding $|x-x_0|^{-\gamma}\ge
r^{-\gamma}$ on $B_r(x_0)$, only
$\int_{B_r(x_0)}|Du|^{-\sigma}\le Cr^{\gamma}$ with $\gamma<N-2$; the threshold $N-2$ is exactly the
one that must be crossed. (For $x_0\in Z_u$ a bound of the type \eqref{eq:morrey} can be extracted
from the refined estimates of \cite{Sci2}, whose test functions carry the extra factor
$|x-x_0|^{-\mu}$. The novelty here is that \eqref{eq:morrey} holds \emph{uniformly over all
centres}, which is what a trace inequality requires, and that the proof is elementary and
self-contained.)

The second fact is that a Morrey bound of the form \eqref{eq:morrey} is exactly what is needed for a
\emph{trace inequality of Adams' type} \cite{Adams73,AdamsHedberg}, and that such a trace inequality
upgrades to a Sobolev inequality carrying the weight $\rho$ on both sides. This allows one to run
Moser's iteration in the measure $\mu=\rho\,dx$, so that the offending cut-off term
$\int\rho\,w^2|D\eta|^2\,dx$ becomes a perfectly harmless $\int w^2(\eta+|D\eta|)^2\,d\mu$. We
isolate the mechanism as follows.

\begin{definition}\label{def:Mkappa}
Let $\Omega'\subset\R^N$ be a bounded domain, $\lambda_0>0$, $\Lambda_0>0$, $\kappa>0$. We say that a
measurable weight $\rho:\Omega'\to(0,\infty]$ satisfies condition $(\mathcal M_\kappa)$ in $\Omega'$
if
\[
\rho\ge\lambda_0\ \text{ a.e. in }\Omega',\qquad\text{and}\qquad
\mu(B_r(x))\le \Lambda_0\, r^{\,N-2+\kappa}\ \ \ \forall x\in\R^N,\ \forall r>0 ,
\]
where $\mu$ is the measure $\rho\,\chi_{\Omega'}\,dx$.
\end{definition}

\begin{theorem}[Abstract weak Harnack comparison inequality]\label{thm:abstract}
Let $u,v\in C^1(\Omega)$ with $u\le v$ and assume \eqref{eq:scp-hyp}. Let $\Omega'\Subset\Omega$ and
suppose that the weight $\rho:=(|Du|+|Dv|)^{p-2}$ satisfies $(\mathcal M_\kappa)$ in $\Omega'$ for
some $\kappa>0$. Then there exists $\chi>1$ such that, whenever
$\overline{B_{6\delta}(x)}\subset\Omega'$ and $0<s<\chi$, there is a constant $C>0$ with
\[
\Big(\int_{B_{2\delta}(x)}(v-u)^s\,d\mu\Big)^{1/s}\le C\,\inf_{B_{\delta}(x)}(v-u).
\]
Consequently the strong comparison principle holds in $\Omega'$.
\end{theorem}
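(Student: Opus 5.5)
We prove Theorem \ref{thm:abstract} by Moser's iteration carried out entirely in the measure $d\mu=\rho\,\chi_{\Omega'}\,dx$, with $w:=v-u\ge0$. The plan has three steps, which we take in order.

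\emph{Step 1: $w$ is a weak supersolution of a linear equation with weight comparable to $\rho$.} Subtracting the two weak inequalities in \eqref{eq:scp-hyp}, we get for every nonnegative test function $\varphi$
\[
\int_\Omega (A(x)Dw,D\varphi)\,dx+\Lambda\int_\Omega w\varphi\,dx\ \ge\ 0,\qquad
A(x):=\int_0^1 D_\xi\big(|\xi|^{p-2}\xi\big)\big|_{\xi=Du+t(Dv-Du)}\,dt .
\]
The standard vector inequalities for $1<p<2$ give $c_1(|Du|+|Dv|)^{p-2}|\xi|^2\le (A\xi,\xi)$ and $|A|\le c_2(|Du|+|Dv|)^{p-2}$. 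Hence $A$ is elliptic with respect to the scalar weight $\rho$. Moreover $\rho\ge\lambda_0$ because the gradients are bounded on $\Omega'$.

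\emph{Step 2: a Sobolev inequality with $\rho$ on both sides.} By Adams' trace inequality \cite{Adams73,AdamsHedberg}, the bound $\mu(B_r)\le\Lambda_0r^{N-2+\kappa}$ gives, for $\varphi\in C_c^1(B_{6\delta}(x))$,
\[
\Big(\int|\varphi|^{2\chi}d\mu\Big)^{1/(2\chi)}\le C\Big(\int|D\varphi|^{q}dx\Big)^{1/q}
\]
for some $q<2$ close to $2$ and some $\chi>1$. Here $\chi$ is determined by $\kappa$, through the condition $\frac{N-2+\kappa}{2\chi}>\frac{N-q}{q}\cdot\frac{1}{1}$ adjusted appropriately, i.e. $2\chi\le q(N-2+\kappa)/(N-q)$. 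The right-hand side is then bounded by $\lambda_0^{-1/2}\|D\varphi\|_{L^2(dx)}\le C(\int|D\varphi|^2 d\mu)^{1/2}$, up to a constant depending on the size of the ball, using $\rho\ge\lambda_0$ and Hölder's inequality on a bounded ball. This is the weighted Sobolev inequality
\[
\|\varphi\|_{L^{2\chi}(\mu)}\le C\|D\varphi\|_{L^2(\mu)} .
\]
We expect this step to be the main obstacle. The delicate points are:
\begin{itemize}[leftmargin=2em]
\item checking the admissible exponents in Adams' theorem, namely that $N-2+\kappa>N-2$ really gives $2\chi>2$ once $q<2$ is taken close enough to $2$;
\item handling the lower-order term $\Lambda\int w\varphi\,dx$, which we bound by $\lambda_0^{-1}|\Lambda|\int w\varphi\,d\mu$.
\end{itemize}

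\emph{Step 3: Moser iteration and the Harnack/crossover argument.} We test with $\varphi=\eta^2(w+\epsilon)^{\beta}$ for $\beta<0$, and with the logarithmic test function $\eta^2(w+\epsilon)^{-1}$. The Caccioppoli inequalities then read
\[
\int\eta^2|D(w+\epsilon)^{\gamma}|^2\,d\mu\le C\int(w+\epsilon)^{2\gamma}(\eta+|D\eta|)^2\,d\mu .
\]
The cut-off term is now harmless because it is measured in $\mu$. Combined with Step 2, this iterates with gain $\chi$ per step. This gives $\inf_{B_\delta}(w+\epsilon)\ge c(\int_{B_{2\delta}}(w+\epsilon)^{-s}d\mu)^{-1/s}$.

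For positive exponents $s<\chi$, iteration from exponent $s$ upward gives the reverse estimate. The two are bridged by a John--Nirenberg/BMO estimate for $\log(w+\epsilon)$ with respect to $\mu$. This requires $\mu$ to be doubling; if it is not, we replace the John--Nirenberg step by the Bombieri--Giusti lemma, which needs only the Poincaré-type bound that the log test function provides. Letting $\epsilon\to0$ yields the stated inequality.

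Finally, the strong comparison principle follows from a covering argument. If $w(x_1)=0$ at some point, the inequality forces $w=0$ $\mu$-a.e. on a ball, hence $dx$-a.e. since $\rho>0$. So the zero set of $w$ is open. It is closed by continuity, and connectedness then gives $w\equiv0$ in $\Omega'$.
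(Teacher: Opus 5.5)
Your proposal is correct. Apart from one step it follows the paper's proof:
\begin{itemize}
\item the Sobolev inequality with $\mu$ on both sides, obtained from Adams' trace inequality and $\rho\ge\lambda_0$ (you use an exponent $q<2$ in every dimension, which the paper does only for $N=2$; this just gives a slightly smaller $\chi$);
\item the Caccioppoli inequality for $\eta^2(w+\epsilon)^{\beta}$, with the cut-off term measured in $\mu$;
\item the iteration to the infimum, followed by finitely many positive steps up to $s<\chi$;
\item the open--closed argument for the strong comparison principle.
\end{itemize}

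The genuine difference is the crossover between small positive and small negative exponents. The paper follows Trudinger. With $\widetilde w=\log(w_\varsigma/k_\varsigma)$ it proves a second Caccioppoli inequality for $|\widetilde w|^{\beta}$, $\beta\ge1$, using the test function $\eta^2\Psi(\widetilde w)/w_\varsigma$. It iterates this to the moment bounds $\Phi(m,\tfrac{5\delta}{2},\widetilde w)\le C(1+m)$ and then sums the exponential series.

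You instead invoke the Bombieri--Giusti lemma for $w_\epsilon/k$ and $k/w_\epsilon$. Its hypotheses are two things you essentially have:
\begin{itemize}
\item the two reverse-H\"older chains, whose constants take the required form $(C(\sigma-\sigma')^{-\gamma})^{1/\alpha-1/\alpha_0}$ because the positive chain stays below $s/\chi<1$, so $|\beta|$ is bounded away from $0$;
\item a weak-$L^1(\mu)$ bound for $\log(w_\epsilon/k)$ on a single ball.
\end{itemize}
This spares the second iteration, at the cost of quoting an abstract lemma. Like the paper's route, it needs no doubling.

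Two corrections are needed.
\begin{itemize}
\item Drop John--Nirenberg altogether rather than keeping it as a first option. $(\mathcal M_\kappa)$ gives neither doubling nor the scale-invariant Poincar\'e inequality on all sub-balls that a BMO bound would require, so you must commit to Bombieri--Giusti.
\item The log test function only yields $\int_{B_{5\delta}}\rho\,|D\log w_\epsilon|^2\,dx\le C$. For the weak-$L^1$ bound you also need the mean-oscillation form of your Step 2, namely $\|g-\overline g_B\|_{L^{2\chi}(B,\mu)}\le C\|Dg\|_{L^2(B,\mu)}$. This follows from $|g-\overline g_B|\le C\,I_1(|Dg|\chi_B)$ and the same trace inequality. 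It is \eqref{eq:sobolev-mean}, which the paper uses in exactly this role.
\end{itemize}
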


Theorem \ref{thm:main4} follows by combining Theorem \ref{thm:abstract} with Theorem
\ref{thm:morrey}: for $\frac32<p<2$ one has $\sigma:=2-p<p-1$, so \eqref{eq:morrey} applies to
$\rho=|Du|^{-(2-p)}$ and, a fortiori, to $(|Du|+|Dv|)^{p-2}\le|Du|^{p-2}$.

\subsection{Applications}
The strong comparison principle and the strong maximum principle for the linearized operator are
used as black boxes in a substantial part of the qualitative theory of $p$-Laplace equations, and
the restriction $\frac{2N+2}{N+2}<p<2$ propagates to every such result. In Section
\ref{sec:applications} we make this systematic and obtain, in particular: Gibbons' conjecture for
$-\Delta_pu=f(u)$ in $\R^N$ (Esposito--Farina--Montoro--Sciunzi \cite{EFMS1}) for
$\frac32<p<2$ (Theorem \ref{thm:gibbons}); the monotonicity in half-spaces for changing-sign
nonlinearities of \cite{EFMS2} for $\frac32<p<2$ (Theorem \ref{thm:halfspace}); the Harnack
comparison inequality with a first-order term of Merch\'an--Montoro--Sciunzi \cite{MMS} and the
strong comparison and maximum principles with a gradient term of \cite{Le}, again for
$\frac32<p<2$ (Theorem \ref{thm:firstorder}). Along the way we
also relax hypothesis $(F)$: only the constant sign of the composition $f(u)$ on the subdomain
under consideration is needed (Proposition \ref{prop:H}), which is what the applications actually
provide.

\subsection{An abstract criterion in action}
The abstract criterion is flexible, and gives more than $p>\frac32$ when extra information on the
solution is available. As an illustration:

\begin{corollary}[The radial case]\label{cor:ball}
Let $\Omega=B_R(0)\subset\R^N$, $N\ge2$, let $f$ satisfy $(F)$ and let $u\in C^1(\overline\Omega)$ be
a weak solution of \eqref{eq:dirichlet}. If $\frac43<p<2$, then the conclusions of Theorems \ref{thm:main1}, \ref{thm:main2} and
\ref{thm:main4} hold in $\Omega$.
\end{corollary}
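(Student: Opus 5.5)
The plan is to verify the abstract condition $(\mathcal M_\kappa)$ of Definition \ref{def:Mkappa} directly for the weight $\rho=|Du|^{p-2}$, using the explicit radial structure of $u$. We then feed it into Theorem \ref{thm:abstract}, and into the Moser iteration behind Theorems \ref{thm:main1} and \ref{thm:main2}, in place of Theorem \ref{thm:morrey}. The argument must not use Theorem \ref{thm:morrey} itself: that would need $\sigma=2-p<p-1$, i.e.\ $p>\frac32$.

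\emph{Step 1 (symmetry).} First I will invoke the symmetry result for the singular case in the ball (Damascelli--Pacella \cite{DamPac}, valid for all $1<p<2$). It gives that $u$ is radial, $u=u(r)$, with $u'(r)<0$ for $0<r<R$. Hence $Z_u=\{0\}$. By Hopf's lemma, $|Du|$ is also bounded away from zero near $\partial B_R$.

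\emph{Step 2 (asymptotics at the origin).} Integrating the radial equation
\[
-(r^{N-1}|u'|^{p-2}u')'=r^{N-1}f(u)
\]
from $0$ gives
\[
|u'(r)|^{p-1}=r^{1-N}\int_0^r s^{N-1}f(u(s))\,ds .
\]
Since $f(u)$ is continuous and positive near $r=0$, this yields $c_1 r\le|u'(r)|^{p-1}\le c_2r$ on $(0,r_0)$. Therefore, with
\[
a:=\frac{2-p}{p-1},
\]
we get $\rho(y)\le C|y|^{-a}$ near $0$, while $\rho\le C$ on every compact subset of $\overline{B_R}\setminus\{0\}$. Boundedness of $Du$ gives the lower bound $\rho\ge\lambda_0>0$.

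\emph{Step 3 (Morrey bound).} For every $x\in\R^N$ and $r>0$, the elementary rearrangement inequality (the integral of the radially decreasing function $|y|^{-a}$ over a ball is maximal when the ball is centred at $0$) gives
\[
\int_{B_r(x)}|y|^{-a}\,dy\le\int_{B_r(0)}|y|^{-a}\,dy=Cr^{N-a},
\]
provided $a<N$. Hence $\mu(B_r(x))\le\Lambda_0r^{N-2+\kappa}$ with $\kappa=2-a$ for small $r$; large $r$ is absorbed since $\mu$ is finite. We need $\kappa>0$, i.e.\ $2-p<2(p-1)$, which is exactly $p>\frac43$. So $\rho$ satisfies $(\mathcal M_\kappa)$ in every $\Omega'\Subset\Omega$. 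For the comparison principle, $(|Du|+|Dv|)^{p-2}\le|Du|^{p-2}$, and this weight is bounded below by $(\|Du\|_\infty+\|Dv\|_\infty)^{p-2}$ on $\Omega'$, so it inherits $(\mathcal M_\kappa)$. Theorem \ref{thm:abstract} then yields the conclusion of Theorem \ref{thm:main4}.

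\emph{Step 4 (linearized operator).} For Theorems \ref{thm:main1} and \ref{thm:main2}, I will rerun their proofs, which I expect to use the solution only through three inputs: the ellipticity \eqref{eq:ellipticity}, boundedness of $f'(u)$ on $\Omega'$ (true since $u>0$ there and $f$ is locally Lipschitz on $(0,\infty)$), and the weighted Sobolev inequality obtained from the Adams-type trace inequality under $(\mathcal M_\kappa)$. Substituting Step 3 for Theorem \ref{thm:morrey} gives the weak Harnack inequality, and then the strong maximum principle by the usual connectedness argument.

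The main obstacle is this last point. One must check that the proofs of Theorems \ref{thm:main1}--\ref{thm:main2} really factor through $(\mathcal M_\kappa)$ and nothing else. In particular, the restriction $p>\frac32$ must not enter anywhere else, for instance through the requirement that $v\in H^{1,2}_\rho$ or that $\rho\in L^1_{\rm loc}$. In the radial case the latter is automatic because $a<2\le N$.
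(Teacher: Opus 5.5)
Your proposal is correct and is essentially the paper's proof. Radial symmetry and integration of the radial ODE give $|Du(x)|\ge c\,|x|^{\frac1{p-1}}$ on compact subsets, hence $\rho\lesssim|x|^{-\theta}$ with $\theta=\frac{2-p}{p-1}<2$ exactly when $p>\frac43$; so $(\mathcal M_\kappa)$ holds with $\kappa=2-\theta$, and Theorems \ref{thm:abstract} and \ref{thm:sobolev} (and their linearized version) apply.

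The differences are only cosmetic. You cite \cite{DamPac} instead of \cite[Corollary 2.1]{DS2} for the symmetry. You also bound $\int_{B_r(x)}|y|^{-\theta}\,dy$ for all centres at once via the rearrangement inequality, whereas the paper splits into the cases $|x_0|\ge2r$ and $|x_0|<2r$.
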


\subsection{Optimality of the threshold \texorpdfstring{$\frac32$}{3/2}}
Finally we show that $\frac32$ is not an artefact of our proof, but the natural limit of the
weighted-space approach altogether.

\begin{proposition}\label{prop:sharp}
Let $1<p<2$ and let $p'=\frac{p}{p-1}$. For $c>0$ set
\begin{equation}\label{eq:example}
u(x)=c-\frac{|x_1|^{p'}}{p'},\qquad x\in\R^N .
\end{equation}
Then $u\in C^1(\R^N)$, $-\Lap{p}(u)=1$ in $\R^N$, and $u>0$ on any ball $B$ on which
$|x_1|^{p'}<p'c$; thus $u$ is a positive solution of \eqref{eq:main} on $B$ with $f\equiv1$
satisfying $(F)$. Moreover $|Du(x)|=|x_1|^{\frac1{p-1}}$, $Z_u=\{x_1=0\}$ and:
\begin{enumerate}[label=\rm(\roman*)]
\item $\int_B|Du|^{-\sigma}dx<\infty$ if and only if $\sigma<p-1$; hence the summability exponent in
\eqref{eq:DSsummability} is optimal;
\item $\rho=|Du|^{p-2}\in L^1(B)$ if and only if $p>\frac32$. In particular, for
$1<p\le\frac32$ the weight $\rho$ need not be locally integrable, and the weighted space
$H^{1,2}_{\rho}$ is not defined;
\item if $p>\frac32$, then $\int_{B_r(x_0)}\rho\,dx\simeq r^{\,N-\theta}$ for $x_0\in Z_u$, where
$\theta:=\frac{2-p}{p-1}\in(0,1)$; hence $\rho$ satisfies $(\mathcal M_\kappa)$ with
$\kappa=2-\theta>1$. If $p\le\frac32$ that integral is $+\infty$ for every $r>0$.
\end{enumerate}
\end{proposition}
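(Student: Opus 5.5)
The whole statement reduces to explicit one-variable computations, because $u$ depends only on $x_1$.

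\emph{Regularity and the equation.} Since $p'>1$, the function $g(t)=|t|^{p'}/p'$ is $C^1$ on $\R$ with $g'(t)=|t|^{p'-2}t$. Hence $Du=-|x_1|^{p'-2}x_1\,e_1$ is continuous and $|Du|=|x_1|^{p'-1}=|x_1|^{\frac1{p-1}}$. In particular $Z_u=\{x_1=0\}$.

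Next I compute the flux. Using $(p'-1)(p-1)=1$,
\[
|Du|^{p-2}Du=-|x_1|^{(p'-1)(p-2)}|x_1|^{p'-2}x_1\,e_1=-x_1e_1,
\]
because $(p'-1)(p-2)+p'-1=(p'-1)(p-1)=1$. The field $-x_1e_1$ is smooth, so $-\Lap{p}(u)=\operatorname{div}(x_1e_1)=1$ holds classically for the flux, hence weakly. Integrating by parts against $\varphi\in C_c^\infty$ is legitimate since the flux is Lipschitz.

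Positivity on $B$ is immediate from the formula for $u$. The constant $f\equiv1$ clearly satisfies $(F)$.

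\emph{Integrability, (i) and (ii).} We have $|Du|^{-\sigma}=|x_1|^{-\sigma/(p-1)}$. On a ball $B$ we may use Fubini, slicing in $x_1$. If $B$ does not meet $Z_u$, the integrand is bounded; the intended statement concerns balls meeting $\{x_1=0\}$, which I assume, as is implicit in the optimality claim.

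For a ball that contains a point of $Z_u$ in its interior, the cross-sections in $x'$ have measure bounded above and below near the hyperplane $\{x_1=0\}$. So finiteness is equivalent to
\[
\int_{-\eps}^{\eps}|t|^{-\sigma/(p-1)}\,dt<\infty,
\]
that is, to $\sigma/(p-1)<1$, i.e.\ $\sigma<p-1$. This proves (i).

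For (ii), take $\sigma=2-p$. The condition $2-p<p-1$ is equivalent to $p>\tfrac32$.

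\emph{Morrey behaviour, (iii).} For $x_0\in Z_u$, the ball $B_r(x_0)$ contains the cylinder $(-r/2,r/2)\times B'_{r/2}$ and is contained in $(-r,r)\times B'_r$. Therefore
\[
\int_{B_r(x_0)}\rho\simeq r^{N-1}\int_0^r t^{-\theta}\,dt\simeq r^{N-\theta},
\]
with $\theta=\frac{2-p}{p-1}$. For $\tfrac32<p<2$ we have $\theta\in(0,1)$, and for $p\le\tfrac32$ the integral diverges.

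For the $(\mathcal M_\kappa)$ condition we need the bound uniformly over all centres $x$. The same slicing gives
\[
\mu(B_r(x))\le Cr^{N-1}\int_{|t-x_1|<r}|t|^{-\theta}\,dt\le Cr^{N-1}\int_{-r}^{r}|t|^{-\theta}\,dt,
\]
where the last step holds because $|t|^{-\theta}$ is symmetric decreasing and its integral over an interval of length $2r$ is maximised when the interval is centred at $0$. This gives $\mu(B_r(x))\le Cr^{N-\theta}=Cr^{N-2+(2-\theta)}$ for $r\le1$.

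For large $r$ we use that $\mu$ is supported in the bounded set $B$ (or $\Omega'$), so $\mu(B_r(x))\le\mu(B)\le\mu(B)\,r^{N-2+\kappa}$ when $r\ge1$.

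The lower bound $\rho\ge\lambda_0$ holds on bounded sets because $|Du|$ is bounded there and $p<2$. Hence $\rho$ satisfies $(\mathcal M_\kappa)$ with $\kappa=2-\theta>1$.

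\emph{Main point of care.} The only delicate step is this uniformity in the centre, which the symmetric-decreasing rearrangement remark settles. Everything else is routine.
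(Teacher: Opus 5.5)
Your proposal is correct and follows the same route as the paper: the explicit flux $|Du|^{p-2}Du=-x_1e_1$ gives the equation, and Fubini slicing in $x_1$ reduces (i)--(iii) to the integrability of $|t|^{-\sigma/(p-1)}$ and $|t|^{-\theta}$ near $t=0$. Your symmetric-decreasing bound $\int_{|t-x_1|<r}|t|^{-\theta}\,dt\le\int_{-r}^{r}|t|^{-\theta}\,dt$ is a useful addition, since it gives the uniformity over \emph{all} centres that $(\mathcal M_\kappa)$ requires, which the paper checks explicitly only for $x_0\in Z_u$ (the bound in fact holds for every $r>0$, so your separate large-$r$ case is not needed).
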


The solution \eqref{eq:example} is a local solution of \eqref{eq:main}, which is the setting of
Theorems \ref{thm:main1}, \ref{thm:main2} and \ref{thm:main4}. One might hope that the phenomenon
disappears for solutions of the Dirichlet problem \eqref{eq:dirichlet} in a bounded smooth domain,
whose critical set could conceivably always be small. It does not: it suffices to look at an
annulus.

\begin{proposition}\label{prop:annulus}
Let $1<p<2$, let $0<a<b$ and let $\Omega:=\{x\in\R^N:\ a<|x|<b\}$, a bounded smooth domain. Let
$u\in C^1(\overline\Omega)$ be the unique weak solution of
\begin{equation}\label{eq:annulus}
-\Lap{p}(u)=1\ \text{ in }\Omega,\qquad u=0\ \text{ on }\partial\Omega .
\end{equation}
Then $u>0$ in $\Omega$, $u$ is radially symmetric, and there is a unique $r_0\in(a,b)$ such that
\[
Z_u=\{x:\ |x|=r_0\},\qquad
\lim_{r\to r_0}\frac{|Du(x)|}{\big||x|-r_0\big|^{\frac1{p-1}}}=1\quad (r=|x|) ,
\]
while $|Du|$ is bounded away from $0$ outside any neighbourhood of $Z_u$. Consequently the critical
set of $u$ is a smooth hypersurface, and, with $\theta:=\frac{2-p}{p-1}$,
\[
\rho=|Du|^{p-2}\in L^1(\Omega)\iff p>\tfrac32,\qquad
\int_{B_r(x_0)}\rho\,dx\simeq r^{\,N-\theta}\ \ (x_0\in Z_u)\ \text{ when }p>\tfrac32 ,
\]
whereas for $1<p\le\frac32$ one has $\int_{B_r(x_0)}\rho\,dx=+\infty$ for every $x_0\in Z_u$ and
every $r>0$.
\end{proposition}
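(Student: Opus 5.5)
The plan is to reduce \eqref{eq:annulus} to an ODE by radial symmetry, integrate it explicitly, and then read off the behaviour of $u'$ near the unique zero of the flux.

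\textbf{Existence, uniqueness, positivity, symmetry.} Existence and uniqueness of $u$ follow from strict monotonicity of $-\Lap{p}$ (minimize the convex functional $\frac1p\int|Du|^p-\int u$ on $W^{1,p}_0(\Omega)$). Since the equation and $\Omega$ are rotation invariant, $u\circ R$ is again a solution for every $R\in O(N)$, so by uniqueness $u$ is radial, $u=U(|x|)$. Positivity follows from the weak comparison principle with $0$ (a subsolution) together with the strong maximum principle of V\'azquez \cite{Vaz}. Regularity $u\in C^1(\overline\Omega)$ is classical \cite{Lie}.

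\textbf{The radial ODE.} In radial form the equation reads $-(r^{N-1}|U'|^{p-2}U')'=r^{N-1}$. Integrating gives
\[
|U'(r)|^{p-2}U'(r)=\frac{1}{r^{N-1}}\Big(C_0-\frac{r^N}{N}\Big)
\]
for some constant $C_0$. Put $g(r):=\big(C_0-r^N/N\big)r^{1-N}$; then $g$ is strictly decreasing on $(a,b)$. Since $U(a)=U(b)=0$ and $U>0$ inside, $U'$ cannot keep one sign, so $g$ must change sign. Hence $C_0\in(a^N/N,\,b^N/N)$, and the unique zero of $g$ is $r_0=(NC_0)^{1/N}\in(a,b)$. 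Consequently $U'\neq0$ for $r\ne r_0$, and by continuity $|U'|$ is bounded away from $0$ on $[a,b]$ minus any neighbourhood of $r_0$. This gives $Z_u=\{|x|=r_0\}$, a sphere, which is a smooth hypersurface.

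\textbf{Asymptotics at $r_0$.} We have $g(r_0)=0$ and $g'(r_0)=-r_0^{N}r_0^{1-N}=-r_0$ (the other term vanishes because $C_0-r_0^N/N=0$). So $|U'|^{p-1}=|g|=r_0|r-r_0|(1+o(1))$, and therefore
\[
|Du|=\big(r_0|r-r_0|\big)^{\frac1{p-1}}(1+o(1)).
\]
The limit in the statement thus equals $r_0^{1/(p-1)}$. This is $1$ only after normalization, so I would either note that the limiting constant is $r_0^{1/(p-1)}>0$ or, as the statement intends, read the claimed limit up to this positive constant. This constant is the one point I expect to need care in reconciling with the statement as written.

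\textbf{Integrability and Morrey growth.} We get $\rho=|Du|^{p-2}\simeq|r-r_0|^{-\theta}$ with $\theta=\frac{2-p}{p-1}$, and $\rho$ is bounded away from $Z_u$. In polar coordinates, integrability of $\rho$ on $\Omega$ reduces to $\int_{r_0-\eps}^{r_0+\eps}|r-r_0|^{-\theta}\,dr<\infty$, which holds iff $\theta<1$, i.e.\ iff $p>\frac32$. For $x_0\in Z_u$ and small $r$, flatten the sphere locally by a diffeomorphism with Jacobian close to $1$ that sends the distance to $Z_u$ to a coordinate $t$. Then
\[
\int_{B_r(x_0)}\rho\,dx\simeq r^{N-1}\int_{-r}^{r}|t|^{-\theta}\,dt\simeq r^{N-\theta}
\]
when $\theta<1$, exactly as in Proposition \ref{prop:sharp}(iii). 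When $\theta\ge1$ the $t$-integral diverges for every $r>0$, so $\int_{B_r(x_0)}\rho\,dx=+\infty$. For large $r$, comparing $B_r(x_0)\cap\Omega$ with a fixed small ball gives the same infinite lower bound.
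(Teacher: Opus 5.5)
Your route is the paper's route: radial symmetry from uniqueness, the first integral $\Phi(U')=g$ with $g(r)=\frac{r_0^N-r^N}{N r^{N-1}}$, the sign of $g$ on either side of $r_0$, and then $\rho\simeq|r-r_0|^{-\theta}$ integrated in polar coordinates or after flattening the sphere. However, the asymptotic step contains an arithmetic error, and the conclusion you draw from it is wrong.

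The error is in $g'(r_0)$. You have $\frac{d}{dr}\big(C_0-\frac{r^N}{N}\big)=-r^{N-1}$, not $-r^{N}$, so
\[
g'(r_0)=-r_0^{N-1}\,r_0^{1-N}+\Big(C_0-\frac{r_0^N}{N}\Big)(1-N)\,r_0^{-N}=-1 .
\]
Hence $|g(r)|/|r-r_0|\to1$, and
\[
\frac{|Du(x)|}{\big||x|-r_0\big|^{\frac1{p-1}}}=\Big(\frac{|g(r)|}{|r-r_0|}\Big)^{\frac1{p-1}}\longrightarrow1
\]
exactly, as the statement asserts. There is nothing to reconcile and no normalization is involved. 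Your claimed limit $r_0^{1/(p-1)}$ is false whenever $r_0\neq1$. As written, your proposal therefore does not prove the stated asymptotics; it contradicts them.

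Once this slip is corrected, everything else goes through. The integrability criterion $p>\frac32$, the growth $\int_{B_r(x_0)}\rho\,dx\simeq r^{N-\theta}$, and the divergence for $p\le\frac32$ only use the two-sided comparability $|Du|\simeq\big||x|-r_0\big|^{\frac1{p-1}}$ near $Z_u$. They are unaffected by the value of the constant.

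A minor point: you do not need weak comparison or V\'azquez's strong maximum principle to get positivity.
\begin{itemize}
\item $U'=\Phi^{-1}(g)$ is strictly decreasing and $\int_a^bU'\,dr=U(b)-U(a)=0$, so $U'$ must change sign. This locates $r_0\in(a,b)$ without using $U>0$.
\item Then $U'>0$ on $(a,r_0)$, $U'<0$ on $(r_0,b)$ and $U(a)=U(b)=0$ give $U>0$ directly.
\end{itemize}
This is how the paper argues.
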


Thus the obstruction is not an artefact of working locally: already for the model problem
\eqref{eq:annulus} on a smooth bounded domain, the weight fails to be integrable as soon as
$p\le\frac32$. Incidentally, Proposition \ref{prop:annulus} also shows that the convexity assumption
cannot be removed from Theorem \ref{thm:main3}: on an annulus the critical set of a solution of
\eqref{eq:dirichlet} is a hypersurface, not a point.

Item (ii) of Proposition \ref{prop:sharp} shows that no argument formulated in $H^{1,2}_{\rho}$ can
reach below $p=\frac32$; item
(iii) shows that, in the same example, our hypothesis $(\mathcal M_\kappa)$ holds in exactly the
range $p>\frac32$, so that Theorem \ref{thm:abstract} is being applied under an optimal assumption.
The better threshold $\frac43$ of Corollary \ref{cor:ball} reflects the fact that there the critical
set is a single point rather than a hypersurface; see Remark \ref{rem:minkowski}. Whether the strong
comparison principle survives for $1<p\le\frac32$ is, to our knowledge, completely open; we discuss
this in Section \ref{sec:remarks}. Let us stress that Propositions \ref{prop:sharp} and
\ref{prop:annulus} are not counterexamples to the strong comparison principle: they only certify
that a genuinely different technique, one that does not linearize in $H^{1,2}_{\rho}$, would
be required.

\subsection{Organization} Section \ref{sec:prelim} collects notation and known results. Section
\ref{sec:morrey} contains the proof of Theorem \ref{thm:morrey}. Section \ref{sec:sobolev} develops
the weighted Sobolev inequality with gain and the Moser iteration in the measure $\mu$, proving
Theorem \ref{thm:abstract}. Section \ref{sec:proofs} contains the proofs of the main theorems and of
Corollary \ref{cor:ball}. Section \ref{sec:applications} is devoted to the applications, and
Section \ref{sec:remarks} to Propositions \ref{prop:sharp} and \ref{prop:annulus} and to some open
questions.

%%%%%%%%%%%%%%%%%%%%%%%%%%%%%%%%%%%%%%%%%%%%%%%%%%%%%%%%%%%%%%%%%%%%%%%%%%%%%%%
\section{Notation and preliminaries}\label{sec:prelim}
%%%%%%%%%%%%%%%%%%%%%%%%%%%%%%%%%%%%%%%%%%%%%%%%%%%%%%%%%%%%%%%%%%%%%%%%%%%%%%%

Throughout, $C$ denotes a positive constant which may change from line to line and whose dependence
is indicated when relevant. We write $u_{x_i}=\partial u/\partial x_i$, $Du=(u_{x_1},\dots,u_{x_N})$,
$D^2u$ for the Hessian and $\|D^2u\|^2=\sum_{i}|Du_{x_i}|^2$.

For a weight $0<\rho\in L^1(\Omega')$ and $1\le s<\infty$, $H^{1,s}_{\rho}(\Omega')$ is the space of
functions with distributional gradient for which
$\|v\|_{H^{1,s}_\rho}=\|v\|_{L^s(\Omega')}+\|Dv\|_{L^s(\Omega',\rho)}$ is finite, where
$\|Dv\|^s_{L^s(\Omega',\rho)}=\int_{\Omega'}\rho|Dv|^s$; $H^{1,s}_{0,\rho}(\Omega')$ is the closure of
$C_c^\infty(\Omega')$ in that norm. See \cite{DS1,HKM,Tru2}.

We shall systematically use the following elementary facts. If $1<p<2$ and
$M:=\|Du\|_{L^\infty(\Omega')}<\infty$, then, since $p-2<0$,
\begin{equation}\label{eq:lowerweight}
\rho=|Du|^{p-2}\ \ge\ M^{p-2}=:\lambda_0>0\qquad\text{a.e. in }\Omega' .
\end{equation}
Consequently $\mu:=\rho\,dx$ and the Lebesgue measure are mutually absolutely continuous on
$\Omega'$, $H^{1,2}_{0,\rho}(\Omega')\subseteq H^{1,2}_0(\Omega')$, and essential infima and suprema
with respect to $\mu$ and to $dx$ coincide.

We recall the standard vector inequalities (see e.g. \cite[Lemma 2.1]{Dam1}): for every $p>1$ there
are $\hat C,\check C>0$ such that for all $\eta,\eta'\in\R^N$ with $|\eta|+|\eta'|>0$,
\begin{equation}\label{eq:vector}
\big[|\eta|^{p-2}\eta-|\eta'|^{p-2}\eta'\big]\cdot(\eta-\eta')\ \ge\ \hat C(|\eta|+|\eta'|)^{p-2}|\eta-\eta'|^2 ,
\end{equation}
\begin{equation}\label{eq:vector2}
\big||\eta|^{p-2}\eta-|\eta'|^{p-2}\eta'\big|\ \le\ \check C(|\eta|+|\eta'|)^{p-2}|\eta-\eta'| .
\end{equation}

Finally, in the proof of Theorem \ref{thm:main3} we shall use the notation of the moving plane
method, which we recall from \cite[\S2]{DS2}. For a direction $\nu\in S^{N-1}$ and $\lambda\in\R$ set
\[
T^\nu_\lambda:=\{x\in\R^N:\ x\cdot\nu=\lambda\},\qquad
\Omega^\nu_\lambda:=\{x\in\Omega:\ x\cdot\nu<\lambda\},\qquad
x^\nu_\lambda:=x+2(\lambda-x\cdot\nu)\nu ,
\]
let $a(\nu):=\inf_{x\in\Omega}x\cdot\nu$ and
$(\Omega^\nu_\lambda)':=\{x^\nu_\lambda:\ x\in\Omega^\nu_\lambda\}$. The \emph{optimal cap} is
$\Omega^\nu_{\lambda_2(\nu)}$, where
\[
\lambda_2(\nu):=\sup\big\{\lambda>a(\nu):\ (\Omega^\nu_\mu)'\subseteq\Omega\ \text{ for all }\mu\in(a(\nu),\lambda]\big\} .
\]
If $\Omega$ is convex in the direction $\nu$ and symmetric with respect to $T^\nu_0$, then
$\lambda_2(\nu)=\lambda_2(-\nu)=0$.

We shall use the following known results.

\begin{theorem}[\cite{DiB,Tol,Lie,Teix}]\label{thm:C1alpha}
Let $u\in C^1(\Omega)$ be a weak solution of \eqref{eq:main} with $f$ satisfying $(F)$. Then
$u\in C^{1,\tau}_{\rm loc}(\Omega)$ for some $\tau\in(0,1]$. In particular, for every
$\Omega'\Subset\Omega$ there is $L>0$ with
\begin{equation}\label{eq:holder}
|Du(x)-Du(y)|\le L|x-y|^{\tau}\qquad \forall x,y\in\overline{\Omega'} .
\end{equation}
\end{theorem}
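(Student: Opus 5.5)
Since Theorem \ref{thm:C1alpha} is the classical $C^{1,\tau}$ regularity result, the plan is to reduce it to the interior estimates of DiBenedetto \cite{DiB} and Tolksdorf \cite{Tol}, in the form given by Lieberman \cite{Lie}, for equations $-\Lap{p}(u)=g$ with bounded right-hand side. The only input needed from \eqref{eq:main} is a local $L^\infty$ bound on the source term.

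First I would freeze the nonlinearity. Set $g(x):=f(u(x))$. Since $u\in C^1(\Omega)$ is continuous and $f$ is continuous on $[0,\infty)$ by $(F)$, $g$ is continuous in $\Omega$. Fix $\Omega'\Subset\Omega$ and choose an intermediate $\Omega'\Subset\Omega_1\Subset\Omega$. Then $\|g\|_{L^\infty(\Omega_1)}\le\max_{\overline{\Omega_1}}|f(u)|<\infty$ and $\|u\|_{L^\infty(\Omega_1)}<\infty$. Thus $u\in W^{1,p}(\Omega_1)\cap L^\infty(\Omega_1)$ is a weak solution of $-\Lap{p}(u)=g$ with $g\in L^\infty(\Omega_1)$. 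The Lipschitz continuity of $f$ is not needed for this step.

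Next I would invoke the interior estimate. For $1<p<\infty$ it states the following: for every $\Omega'\Subset\Omega_1$ there are $\tau\in(0,1]$ and $L>0$ with $\|u\|_{C^{1,\tau}(\overline{\Omega'})}\le L$. Here $\tau$ and $L$ depend only on $N$, $p$, $\|u\|_{L^\infty(\Omega_1)}$, $\|g\|_{L^\infty(\Omega_1)}$ and $\dist(\Omega',\partial\Omega_1)$. The estimate is obtained by covering $\overline{\Omega'}$ with finitely many balls $B_{r}(x_i)$ with $B_{2r}(x_i)\subset\Omega_1$, which is possible by compactness, and using the local estimates on each ball. For pairs of points $x,y\in\overline{\Omega'}$ with $|x-y|\ge r$, the Hölder quotient is bounded trivially by $2\|Du\|_{L^\infty}r^{-\tau}$. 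This gives \eqref{eq:holder} uniformly on $\overline{\Omega'}$, and letting $\Omega'$ exhaust $\Omega$ gives $u\in C^{1,\tau}_{\rm loc}(\Omega)$. The exponent may depend on the compact set, which is harmless for all later uses on a fixed $\Omega'$.

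The only genuine difficulty is the regularity theory itself, namely the De Giorgi–Moser treatment of $|Du|$ near the degenerate/singular set, which is the content of \cite{DiB,Tol}. I would not reprove it. The step I would check most carefully is that the cited theorems apply to merely bounded right-hand sides, rather than requiring $g$ to depend on $u$ in a structured way. They do: Lieberman's structure conditions allow a bounded zero-order term. I would also check that the dependence of the constants is exactly as listed above, because Theorem \ref{thm:morrey} later tracks the dependence of its constants on $[Du]_{C^{0,\tau}(\overline{\Omega'})}$.
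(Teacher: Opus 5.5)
Your proposal is correct and matches the paper, which gives no independent proof of this theorem: it quotes the interior $C^{1,\tau}$ estimates of DiBenedetto, Tolksdorf and Lieberman for $-\Lap{p}(u)=g$ with $g=f(u)\in L^\infty_{\rm loc}(\Omega)$, the same observation it records in Proposition \ref{prop:H}(i). One cosmetic fix to your covering argument: take the balls with $B_{4r}(x_i)\subset\Omega_1$ (or use a Lebesgue number of the cover), so that any two points at distance less than $r$ lie in a common ball on which the local estimate holds.
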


\begin{theorem}[{\cite[Theorem 1.1 and (1.2)]{DS1}}]\label{thm:DSlin}
Let $u\in C^1(\Omega)$ be a weak solution of \eqref{eq:main} with $f$ satisfying $(F)$. Then
$u_{x_i}\in H^{1,2}_{\rho,\rm loc}(\Omega)$, $|Du|^{p-2}Du\in W^{1,2}_{\rm loc}(\Omega;\R^N)$, and
\begin{equation}\label{eq:linear-eq}
L_u(u_{x_i},\varphi)=0\qquad\text{for every }\varphi\in W^{1,2}(\Omega)\ \text{with compact support in }\Omega,
\end{equation}
for $i=1,\dots,N$, where $L_u$ is as in \eqref{eq:linearized}. Moreover, for every
$\Omega'\Subset\Omega$, every $\beta<1$ and every $\gamma<N-2$ ($\gamma=0$ if $N=2$),
\[
\int_{\Omega'}\frac{|Du|^{p-2-\beta}}{|x-y|^\gamma}\|D^2u\|^2\,dx\le C
\qquad\text{uniformly for }y\in\Omega' .
\]
Finally $|Z_u|=0$ \emph{(\cite{Lou}; it also follows from Theorem \ref{thm:morrey} above)}.
\end{theorem}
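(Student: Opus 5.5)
The plan is to prove the weighted Hessian estimate first, via a regularized problem, and then deduce the remaining assertions from it. Fix $\Omega'\Subset\Omega''\Subset\Omega$, with $\Omega''$ a smooth domain.

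\textbf{Regularization.} For $\eps\in(0,1)$ let $u_\eps$ solve
\[
-\operatorname{div}\big((\eps+|Du_\eps|^2)^{\frac{p-2}{2}}Du_\eps\big)=f(u)\ \text{ in }\Omega'',\qquad u_\eps=u\ \text{ on }\partial\Omega'' .
\]
The source $f(u)$ is frozen, so it is a fixed Hölder function. By the uniform $C^{1,\tau}$ theory of Theorem \ref{thm:C1alpha} (DiBenedetto, Tolksdorf, Lieberman), which is uniform in $\eps$, we get $u_\eps\in C^{2}\cap C^{1,\tau}$ with bounds independent of $\eps$. Moreover $u_\eps\to u$ in $C^1(\overline{\Omega'})$, by uniqueness of the limit problem (monotonicity of the operator).

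For each fixed $\eps$ the operator is uniformly elliptic. Hence we may differentiate the equation in $x_i$. The function $w_i=(u_\eps)_{x_i}$ then satisfies the linearized equation for the regularized operator, with right-hand side $\partial_i(f(u))$. Its ellipticity is comparable to $(\eps+|Du_\eps|^2)^{\frac{p-2}{2}}$, as in \eqref{eq:ellipticity}.

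\textbf{Weighted Hessian estimate.} Test the $i$-th differentiated equation with
\[
\varphi=\frac{w_i\,\psi^2}{(\eps+|Du_\eps|^2)^{\beta/2}\,|x-y|^{\gamma}} ,
\]
where $\psi$ is a cutoff, the singularity at $y$ is removed by a truncation $|x-y|_\delta$, and then we sum over $i$. The terms split as follows.
\begin{enumerate}[label=(\alph*)]
\item The principal term gives $\int (\eps+|Du_\eps|^2)^{\frac{p-2-\beta}{2}}\|D^2u_\eps\|^2\psi^2|x-y|^{-\gamma}$.
\item Differentiating the factor $(\eps+|Du_\eps|^2)^{-\beta/2}$ produces a term of the form $-\beta\,(\ldots)\,|D|Du_\eps||^2$. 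This is absorbed into (a) because $\beta<1$, leaving a factor of order $(1-\beta)$ in front of the good term; the sharp constant uses \eqref{eq:ellipticity}.
\item Differentiating $|x-y|^{-\gamma}$ gives $\tfrac12 D(|Du_\eps|^2)\cdot D|x-y|^{-\gamma}$ times the weight. After integrating by parts once more, we use $\Delta|x-y|^{-\gamma}=\gamma(\gamma+2-N)|x-y|^{-\gamma-2}\le0$ for $\gamma<N-2$ (with $\gamma=0$ when $N=2$). This makes the term either signed favourably, or controlled by $\int|Du_\eps|^{p}|x-y|^{-\gamma-2}$, which is uniformly bounded since $\gamma+2<N$.
\item The right-hand side $\partial_i f(u)$ is moved onto $\varphi$ by integrating by parts. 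It is handled with Young's inequality, the $C^1$ bounds, and the local integrability of $|x-y|^{-\gamma}$.
\end{enumerate}
All bounds are independent of $\eps$, $\delta$ and $y$. Letting $\delta\to0$, then $\eps\to0$, and applying Fatou's lemma on $\{Du\ne0\}$ gives the stated uniform estimate. (On $Z_u$ nothing needs to be said once $|Z_u|=0$ is known, or the estimate is read there with $D^2u$ in the Sobolev sense.)

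\textbf{Consequences.} Taking $\beta=0$ and $\gamma=0$ gives $\int\rho\|D^2u\|^2<\infty$, that is, $u_{x_i}\in H^{1,2}_{\rho,\rm loc}$. Next, $|D(|Du|^{p-2}Du)|\le C|Du|^{p-2}\|D^2u\|$, and
\[
|Du|^{2p-4}=|Du|^{p-2-\beta}\,|Du|^{p-2+\beta}\le C\,|Du|^{p-2-\beta}\qquad\text{whenever }2-p\le\beta<1 ,
\]
since $|Du|$ is bounded. Hence $|Du|^{p-2}Du\in W^{1,2}_{\rm loc}$. With this regularity we take the difference quotient in $x_i$ of the weak form of \eqref{eq:main} and let the increment $h\to0$, which yields \eqref{eq:linear-eq}.

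Finally, suppose $|Z_u|>0$. For a $W^{1,2}$ vector field, its gradient vanishes a.e. on the set where the field itself vanishes. Thus $\operatorname{div}(|Du|^{p-2}Du)=0$ a.e. on $Z_u$, contradicting $-\Lap{p}u=f(u)>0$ there. Hence $|Z_u|=0$.

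\textbf{Main obstacle.} I expect step (c) to be the hardest: handling the $|x-y|^{-\gamma}$ term together with the singular weight, uniformly in $y$. The sign of $\Delta|x|^{-\gamma}$ and the restriction $\gamma<N-2$ must be used precisely there.
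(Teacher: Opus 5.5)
Your architecture is the standard regularization scheme behind the Damascelli--Sciunzi result, which the paper quotes rather than reproves. It consists of a frozen source $f(u)$, $\eps$-uniform $C^{1,\tau}$ bounds, the differentiated regularized equation, a weighted test function and the limit $\eps\to0$, followed by Lou's argument for $|Z_u|=0$. The strategy is right, but two steps do not work as you describe them.

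\textbf{Step (b), not (c), is the crux, and \eqref{eq:ellipticity} alone does not give it.} Suppose you bound the principal term below by $\min\{1,p-1\}(\eps+|Du_\eps|^2)^{\frac{p-2-\beta}{2}}\|D^2u_\eps\|^2$. Suppose you bound the absolute value of the $\beta$-term above by $\beta\max\{1,p-1\}(\eps+|Du_\eps|^2)^{\frac{p-2-\beta}{2}}|D|Du_\eps||^2$. Then the comparison closes only for $\beta<\min\{1,p-1\}/\max\{1,p-1\}$, i.e.\ $\beta<p-1$ when $p<2$. The full range $\beta<1$ is what matters: it is what yields summability of $|Du|^{-\sigma}$ for every $\sigma<p-1$ (compare Lemma~\ref{lem:cacc1}, where $\beta=\sigma+2-p$).

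The missing observation is that both terms involve the same matrix. Let $H:=D^2u_\eps$, let $A_\eps$ be the coefficient matrix of the differentiated equation, and set $e:=Du_\eps/|Du_\eps|$ and $s:=|Du_\eps|^2/(\eps+|Du_\eps|^2)\le1$. After summing over $i$, the two terms equal, up to the common factor $\psi^2(\eps+|Du_\eps|^2)^{-\beta/2}|x-y|^{-\gamma}$,
\[
\sum_{j=1}^N(A_\eps Hf_j,Hf_j)-\beta s\,(A_\eps He,He)
\]
for every orthonormal basis $\{f_j\}$. Taking $f_1=e$ shows this is at least $(1-\beta)\sum_j(A_\eps Hf_j,Hf_j)\ge(1-\beta)\min\{1,p-1\}(\eps+|Du_\eps|^2)^{\frac{p-2}{2}}\|H\|^2$ for $0\le\beta<1$. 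Alternatively, use $T_\eps((u_\eps)_{x_i})$ as in Lemma~\ref{lem:cacc2}; there \eqref{eq:Teps} gives the factor $1-\beta$ for each $i$ separately.

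\textbf{The consequences must be drawn for $u_\eps$, not for the limit.} Fatou only controls the classical Hessian of $u$ on the open set $\{Du\ne0\}$. The pointwise bound $|D(|Du|^{p-2}Du)|\le C|Du|^{p-2}\|D^2u\|$ there does not exclude a singular part of the distributional derivative carried by $Z_u$. Invoking $|Z_u|=0$ does not help, for two reasons. A continuous function can have a Cantor part concentrated on a null set. And you derive $|Z_u|=0$ afterwards from $|Du|^{p-2}Du\in W^{1,2}_{\rm loc}$, so the argument is circular.

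The fix uses only your $\eps$-uniform bound. The field $V_\eps:=(\eps+|Du_\eps|^2)^{\frac{p-2}{2}}Du_\eps$ satisfies $|DV_\eps|^2\le C(\eps+|Du_\eps|^2)^{\frac{p-2-\beta}{2}}\|D^2u_\eps\|^2$ with $\beta=\max\{0,2-p\}$. Hence $V_\eps$ is bounded in $W^{1,2}(\Omega')$ and converges uniformly to $|Du|^{p-2}Du$, which therefore lies in $W^{1,2}(\Omega')$. For $1<p<2$ the weight is bounded below uniformly in $\eps$. So $(u_\eps)$ is bounded in $W^{2,2}(\Omega')$, $u\in W^{2,2}_{\rm loc}$, and $D^2u$ is a genuine function vanishing a.e.\ on $Z_u$. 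Only then are Lou's argument, the Fatou step on all of $\Omega'$, and your difference-quotient derivation of \eqref{eq:linear-eq} legitimate. For $p\ge3$ the weight degenerates and $u_{x_i}\in H^{1,2}_{\rho,\rm loc}$ needs further input; the paper only uses $1<p<2$.

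Two smaller points. In (c), Young's inequality bounds the term carrying $D|x-y|^{-\gamma}$ by a small multiple of the good term plus $C\int\psi^2(\eps+|Du_\eps|^2)^{\frac{p-\beta}{2}}|x-y|^{-\gamma-2}dx$. This is bounded uniformly in $y$ because $\gamma+2<N$, and that is the only role of $\gamma<N-2$. The sign of $\Delta|x-y|^{-\gamma}$ is irrelevant. Integration by parts would not even apply to the anisotropic part, proportional to $(Du_\eps,HDu_\eps)(Du_\eps,D|x-y|^{-\gamma})$, which is not a gradient.

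In (d), keep the derivative on $f(u)$, which is Lipschitz on $\overline{\Omega''}$ since $u$ is bounded away from $0$ there, and use $|\varphi|\le C|x-y|^{-\gamma}$. Moving the derivative onto $\varphi$ leaves $\int\psi^2(\eps+|Du_\eps|^2)^{\frac{2-p-\beta}{2}}|x-y|^{-\gamma}$. That integral is not controlled a priori once $2-p-\beta<0$.
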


\begin{remark}\label{rem:global}
Estimate \eqref{eq:DSsummability} is proved in \cite[Theorem 1.1]{DS1} under the global hypotheses
that $\Omega$ be bounded and smooth and that $u\in C^1(\overline\Omega)$ solve the Dirichlet problem
\eqref{eq:dirichlet}. \emph{We shall never use it}: Theorem \ref{thm:morrey} below reproves it, for
local solutions and with the additional Morrey information, from Theorem \ref{thm:DSlin} alone,
whose statements are purely local. This is why Theorems \ref{thm:main1}, \ref{thm:main2} and
\ref{thm:main4} can be stated for local solutions on an arbitrary domain.
\end{remark}

\begin{remark}\label{rem:W22}
If $1<p<2$ then, by Theorem \ref{thm:DSlin} with $\beta=\gamma=0$ and by \eqref{eq:lowerweight},
\[
\int_{\Omega'}\|D^2u\|^2\,dx\le\lambda_0^{-1}\int_{\Omega'}|Du|^{p-2}\|D^2u\|^2\,dx<\infty ,
\]
so that $u\in W^{2,2}_{\rm loc}(\Omega)$. This will be used freely to justify the choice of test
functions involving $|Du|$.
\end{remark}

Finally we recall the trace inequality that we shall use. For $0<\alpha<N$ let
$I_\alpha g(x)=\int_{\R^N}|x-y|^{\alpha-N}g(y)\,dy$ denote the Riesz potential.

\begin{theorem}[Adams \cite{Adams73}; see also {\cite[Theorem 7.2.1]{AdamsHedberg}} and {\cite[\S1.4]{Mazya}}]\label{thm:adams}
Let $N\ge3$, $2<q<\infty$ and let $\nu$ be a nonnegative Borel measure on $\R^N$. Then
\[
\|I_1g\|_{L^q(\nu)}\le A\|g\|_{L^2(\R^N)}\qquad\forall\, 0\le g\in L^2(\R^N)
\]
holds if (and only if) there is $\Lambda_0>0$ with
$\nu(B_r(x))\le\Lambda_0\,r^{\frac{q(N-2)}{2}}$ for all $x\in\R^N$, $r>0$; moreover $A$ depends only
on $N,q,\Lambda_0$.
\end{theorem}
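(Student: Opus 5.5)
This is a classical result, quoted from \cite{Adams73,AdamsHedberg}. If I had to prove it, I would do so by duality, reducing it to an $L^{q'}(\nu)\to L^{q}(\nu)$ bound for the potential $I_2(h\,d\nu)$, and proving that bound with a Hedberg-type splitting.

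\emph{Necessity.} I would test the inequality with $g=\chi_{B_r(x)}$. For $z\in B_r(x)$ we have $I_1g(z)\ge\int_{B_r(x)}(2r)^{1-N}dy=c_N r$. The inequality then gives $c_N r\,\nu(B_r(x))^{1/q}\le A\,|B_r|^{1/2}=C A\,r^{N/2}$, that is, $\nu(B_r(x))\le C r^{q(N-2)/2}$.

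\emph{Sufficiency: reduction by duality.} Write $s:=q(N-2)/2$, so that $\nu(B_r(x))\le\Lambda_0 r^s$. By duality it suffices to show
\[
\|I_1(h\,d\nu)\|_{L^2(\R^N)}\le A\|h\|_{L^{q'}(\nu)}\qquad\text{for all }h\ge0 .
\]
Since $N\ge3$, the semigroup identity $I_1*I_1=c_N I_2$ holds. Using Fubini,
\[
\|I_1(h\nu)\|_2^2=c_N\int I_2(h\nu)\,h\,d\nu\le c_N\|I_2(h\nu)\|_{L^q(\nu)}\|h\|_{L^{q'}(\nu)} .
\]
So everything reduces to proving $\|I_2(h\nu)\|_{L^q(\nu)}\le C\|h\|_{L^{q'}(\nu)}$.

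\emph{Sufficiency: pointwise bound.} Fix $x$ and $\delta>0$, and split $I_2(h\nu)(x)$ into the parts $|x-y|<\delta$ and $|x-y|\ge\delta$.

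For the near part I would decompose into dyadic annuli $2^{-k-1}\delta\le|x-y|<2^{-k}\delta$. Each annulus contributes at most
\[
(2^{-k-1}\delta)^{2-N}\,\nu(B_{2^{-k}\delta}(x))\,M_\nu h(x)\le C(2^{-k}\delta)^{s-N+2}M_\nu h(x),
\]
where $M_\nu$ is the centred maximal function with respect to $\nu$. Since $s>N-2$ (because $q>2$), the geometric series converges and the near part is at most $C\delta^{a}M_\nu h(x)$, with $a:=s-(N-2)=(N-2)(q-2)/2>0$.

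For the far part I would apply H\"older in $L^{q'}(\nu)$. By the layer-cake formula and the growth bound, and using $(N-2)q>s$,
\[
\int_{|x-y|\ge\delta}|x-y|^{(2-N)q}d\nu\le C\delta^{s-(N-2)q}=C\delta^{-(N-2)q/2}.
\]
Hence the far part is at most $C\delta^{-b}\|h\|_{L^{q'}(\nu)}$, with $b:=(N-2)/2$.

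Choosing $\delta$ to balance the two terms gives
\[
I_2(h\nu)(x)\le C\,(M_\nu h(x))^{\theta}\,\|h\|^{1-\theta}_{L^{q'}(\nu)},\qquad \theta=\frac{b}{a+b}=\frac1{q-1}=\frac{q'}{q}.
\]
Raising to the power $q$ and integrating against $\nu$:
\[
\|I_2(h\nu)\|^q_{L^q(\nu)}\le C\|h\|^{q-q'}_{L^{q'}(\nu)}\int(M_\nu h)^{q'}d\nu .
\]

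\emph{Main obstacle.} The delicate step is the last integral, namely the bound $\|M_\nu h\|_{L^{q'}(\nu)}\le C\|h\|_{L^{q'}(\nu)}$. The measure $\nu$ need not be doubling, so the Vitali covering argument is unavailable. I would use the Besicovitch covering theorem instead: it gives the weak $(1,1)$ bound for the \emph{centred} maximal operator with respect to an arbitrary Radon measure, with constant depending only on $N$. Marcinkiewicz interpolation with the trivial $L^\infty$ bound then gives the strong bound for $1<q'<\infty$. This yields $\|I_2(h\nu)\|_{L^q(\nu)}\le C\|h\|_{L^{q'}(\nu)}$, and by the duality reduction, the trace inequality with $A=A(N,q,\Lambda_0)$.
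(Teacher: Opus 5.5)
Your proof is correct. The paper gives no proof of this statement; it imports it from Adams' paper and the book of Adams and Hedberg, where the general version $\|I_\alpha g\|_{L^q(\nu)}\le A\|g\|_{L^p}$, $1<p<q$, is proved.

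Your argument is specific to the Hilbert exponent $p=2$. Duality plus the composition formula $I_1*I_1=c_NI_2$ turns the trace inequality into a Hardy--Littlewood--Sobolev bound $I_2(h\,d\nu):L^{q'}(\nu)\to L^q(\nu)$ on the space $(\R^N,\nu)$. This composition step is exactly where $N\ge3$ enters: in the plane, $\int|x-y|^{-1}|z-y|^{-1}dy$ diverges logarithmically at infinity. You then prove the bound by Hedberg's splitting with the $\nu$-centred maximal function.

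The details check out:
\begin{itemize}
\item $a>0$ precisely because $q>2$.
\item The far part converges because $s=(N-2)q/2<(N-2)q$.
\item $\theta=1/(q-1)=q'/q$.
\item The growth condition makes $\nu$ locally finite and atomless. So the kernel singularity at $y=x$ is harmless, and Besicovitch applies; it is the right tool for a non-doubling $\nu$ where Vitali fails.
\end{itemize}

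What your route buys is a short, self-contained proof, with $A=A(N,q,\Lambda_0)$ visible, of exactly the case the paper uses for $N\ge3$ in Theorem~\ref{thm:sobolev}.

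What it does not give is the $L^s$ version, $1<s<2$, that the paper invokes in the two-dimensional case of that same proof. For $p\ne2$ the dual inequality $\|I_1(h\,d\nu)\|_{L^{p'}}\le A\|h\|_{L^{q'}(\nu)}$ cannot be expanded into the bilinear form $\int I_2(h\,d\nu)\,h\,d\nu$. So there one genuinely needs the general theorem from the literature.
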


%%%%%%%%%%%%%%%%%%%%%%%%%%%%%%%%%%%%%%%%%%%%%%%%%%%%%%%%%%%%%%%%%%%%%%%%%%%%%%%
\section{A uniform Morrey estimate: proof of Theorem \ref{thm:morrey}}\label{sec:morrey}
%%%%%%%%%%%%%%%%%%%%%%%%%%%%%%%%%%%%%%%%%%%%%%%%%%%%%%%%%%%%%%%%%%%%%%%%%%%%%%%

We first record two localized Caccioppoli-type inequalities, in a form which keeps track of the
size of $|Du|$ on the ball. Throughout this section $1<p<2$, $u\in C^1(\Omega)$ is a fixed weak
solution of \eqref{eq:main}, $f$ satisfies $(F)$, and $\Omega''\Subset\Omega'\Subset\Omega$ are
fixed. (For $p\ge2$ the weight $\rho=|Du|^{p-2}$ is bounded and none of the estimates of this
section is needed; the arguments below extend to $2\le p<3$ verbatim, since $u\in W^{2,2}_{\rm loc}$
also in that range by \cite{DS1,Sci1}, and to all $p>1$ after replacing $\|D^2u\|$ by
$|D(|Du|^{p-2}Du)|$ in the obvious way.) We set
\[
M:=\|Du\|_{L^\infty(\Omega')},\qquad
c_0:=\min_{\overline{\Omega'}}f(u)>0,\qquad
c_1:=\sup_{\overline{\Omega'}}|f'(u)|<\infty .
\]
That $c_0>0$ follows from $u>0$ in $\Omega$, the compactness of $\overline{\Omega'}$ and $(F)$;
$c_1<\infty$ follows from the local Lipschitz continuity of $f$ on $(0,\infty)$. All the integrands
below involving negative powers of $|Du|$ are defined a.e. in $\Omega'$ and are $+\infty$ on the
critical set $Z_u$, which is Lebesgue-negligible by Theorem \ref{thm:DSlin}; integrands of the form
$|Du|^{p-2-\beta}\|D^2u\|^2$ are understood to vanish on $Z_u$, which is legitimate since
$D^2u=0$ a.e. on $Z_u$ (the distributional gradient of a Sobolev function vanishes a.e. on each of
its level sets, applied to $u_{x_i}$ on $\{u_{x_i}=0\}$).

\begin{lemma}[Weighted second-order estimate]\label{lem:cacc2}
Let $0\le\beta<1$ and let $\psi\in C_c^\infty(\Omega')$, $0\le\psi\le1$. Then
\begin{equation}\label{eq:cacc2}
\int_{\Omega'}\psi^2|Du|^{p-2-\beta}\|D^2u\|^2\,dx\ \le\
C_1\left[\int_{\Omega'}|D\psi|^2|Du|^{p-\beta}\,dx+\int_{\Omega'}\psi^2|Du|^{2-\beta}\,dx\right],
\end{equation}
with $C_1=C_1(p,\beta,c_1)>0$.
\end{lemma}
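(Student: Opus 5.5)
The plan is to test the linearized equation \eqref{eq:linear-eq} satisfied by each $u_{x_i}$ (Theorem \ref{thm:DSlin}) with
\[
\varphi_i=\psi^2\,(|Du|+\eps)^{-\beta}\,u_{x_i},
\]
where $\eps>0$ regularizes the weight. Sum over $i$ and let $\eps\to0$ at the end. The test function is admissible by Remark \ref{rem:W22}: $u\in W^{2,2}_{\rm loc}$, the factor $(|Du|+\eps)^{-\beta}$ is Lipschitz in $Du$, and $\psi$ has compact support in $\Omega'$. Writing $D\varphi_i$, we get three pieces:
\[
\psi^2(|Du|+\eps)^{-\beta}Du_{x_i},\qquad -\beta\psi^2(|Du|+\eps)^{-\beta-1}u_{x_i}\,D|Du|,\qquad 2\psi(|Du|+\eps)^{-\beta}u_{x_i}D\psi .
\]

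\textbf{The main term.} Inserting the first piece in the principal part and summing over $i$, \eqref{eq:ellipticity} gives the lower bound
\[
\min\{1,p-1\}\int\psi^2(|Du|+\eps)^{-\beta}|Du|^{p-2}\|D^2u\|^2 .
\]

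\textbf{The $\beta$-term.} This piece is the delicate one. Note that $\sum_i u_{x_i}Du_{x_i}=|Du|\,D|Du|$ a.e. Summing over $i$, the contribution becomes
\[
-\beta\int\psi^2(|Du|+\eps)^{-\beta-1}|Du|^{p-1}\Big[|D|Du||^2+(p-2)\frac{(Du,D|Du|)^2}{|Du|^2}\Big].
\]
The bracket is nonnegative, so this term has a bad sign. Using $|Du|(|Du|+\eps)^{-1}\le1$ and $|D|Du||\le\|D^2u\|$, it is at most $\beta\max\{1,p-1\}=\beta$ times the main integrand (for $p<2$). The principal part therefore yields at least
\[
(\min\{1,p-1\}-\beta)\int\psi^2(|Du|+\eps)^{-\beta}|Du|^{p-2}\|D^2u\|^2 .
\]
This is only positive when $\beta<p-1$, which is a real obstruction for $p-1\le\beta<1$.

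\textbf{Closing the gap for $p-1\le\beta<1$.} Here I would use the finer structure of the linearized matrix $A=|Du|^{p-2}(I+(p-2)\hat n\otimes\hat n)$, with $\hat n=Du/|Du|$. The $\beta$-term equals $\beta\,(A D|Du|,D|Du|)$, weighted by $\psi^2|Du|^{-\beta}$ at $\eps=0$. The good term equals $\sum_i(ADu_{x_i},Du_{x_i})$. Splitting each $Du_{x_i}$ into components along $\hat n$ and orthogonal to $\hat n$, one checks the following. In the $\hat n$ direction, $(D|Du|,\hat n)^2\le\sum_i(Du_{x_i},\hat n)^2$. In the tangential direction, the Kato-type inequality $|D|Du||^2\le\|D^2u\|^2$ holds, and is strict by a fixed factor in the relevant directions.

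To obtain coercivity with constant $1-\beta>0$ (in the spirit of \cite{DS1}, where every $\beta<1$ is allowed), I would aim for
\[
\sum_i(ADu_{x_i},Du_{x_i})-\beta(AD|Du|,D|Du|)\ \ge\ (1-\beta)\sum_i(ADu_{x_i},Du_{x_i}).
\]
This reduces to $(AD|Du|,D|Du|)\le\sum_i(ADu_{x_i},Du_{x_i})$. That inequality follows from Cauchy--Schwarz in the $A$-inner product: $D|Du|=\sum_i\hat n_iDu_{x_i}$ and $\sum_i\hat n_i^2=1$. This is the key algebraic step and the expected main obstacle; I will verify it carefully, including the $\eps$-regularized version, where $|Du|/(|Du|+\eps)\le1$ only helps. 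Once it holds, the left side is bounded below by
\[
c(1-\beta)\int\psi^2(|Du|+\eps)^{-\beta}|Du|^{p-2}\|D^2u\|^2 .
\]

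\textbf{Remaining terms.} The cut-off term is bounded via \eqref{eq:ellipticity} by
\[
C\int\psi|D\psi|(|Du|+\eps)^{-\beta}|Du|^{p-1}\|D^2u\|.
\]
By Young's inequality this is at most $\delta$ times the good term plus $C_\delta\int|D\psi|^2|Du|^{p-\beta}$. The zero-order term is $\int f'(u)\psi^2(|Du|+\eps)^{-\beta}|Du|^2\le c_1\int\psi^2|Du|^{2-\beta}$. Absorbing the $\delta$-term and letting $\eps\to0$ by monotone convergence gives \eqref{eq:cacc2}; on $Z_u$ the integrand is taken to vanish, as agreed above. The resulting constant depends only on $p$, $\beta$ and $c_1$.
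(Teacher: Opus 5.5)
Your proof is correct, but it takes a different route from the paper.

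The paper does not sum over $i$ before testing. It tests the equation for each $u_{x_i}$ separately with $\psi^2T_\eps(u_{x_i})$, where $T_\eps(t)\approx|t|^{-\beta}t$ is a truncated function of the single variable $u_{x_i}$. The principal part of $D\varphi$ is then the scalar multiple $T_\eps'(u_{x_i})Du_{x_i}$, so no cross term ever appears. Coercivity with constant $(1-\beta)\min\{1,p-1\}$ follows from the one-variable bound $T_\eps'(t)\ge(1-\beta)|t|^{-\beta}\chi_{\{|t|>\eps\}}$. This gives a stronger componentwise estimate with weight $|u_{x_i}|^{-\beta}$. The paper then weakens it to the $|Du|^{-\beta}$ weight using Fatou's lemma, $|u_{x_i}|^{-\beta}\ge|Du|^{-\beta}$, and $Du_{x_i}=0$ a.e. on $\{u_{x_i}=0\}$.

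Your isotropic weight $(|Du|+\eps)^{-\beta}$ does produce the bad cross term $-\beta\frac{|Du|}{|Du|+\eps}(AD|Du|,D|Du|)$. Your pointwise Cauchy--Schwarz inequality in the $A$-inner product, $\|\sum_i\hat n_iDu_{x_i}\|_A^2\le\sum_i\|Du_{x_i}\|_A^2$ with $\sum_i\hat n_i^2=1$, controls it with exactly the same constant $1-\beta$. The factor $|Du|/(|Du|+\eps)\le1$ only helps, since $(AD|Du|,D|Du|)\ge0$. In exchange you get a rotation-invariant computation, a simpler regularization (monotone convergence rather than truncation plus Fatou), and a one-line treatment of the cut-off term through $\sum_iu_{x_i}Du_{x_i}=|Du|\,D|Du|$. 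The paper's version uses only scalar calculus and keeps the sharper per-component weight.

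Two small points:
\begin{itemize}
\item The preliminary crude bound and the heuristic normal/tangential splitting are superseded by the Cauchy--Schwarz step, so drop them. In particular, the claim that Kato's inequality is ``strict by a fixed factor'' is not justified for general solutions, and you never use it.
\item Before absorbing the $\delta$-term you should say why it is finite. For $\eps>0$ it is at most $\eps^{-\beta}\int\psi^2|Du|^{p-2}\|D^2u\|^2$, which is finite by Theorem~\ref{thm:DSlin} with $\beta=\gamma=0$.
\end{itemize}
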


\begin{proof}
For $\eps>0$ let
\[
G_\eps(t)=(2t-2\eps)\chi_{[\eps,2\eps]}(t)+t\,\chi_{(2\eps,\infty)}(t)\quad (t> 0),\qquad
G_\eps(t)=-G_\eps(-t)\quad (t<0),\qquad G_\eps(0)=0,
\]
and set $T_\eps(t)=G_\eps(t)|t|^{-\beta}$ (with $T_\eps(0)=0$). Then $T_\eps$ is Lipschitz,
$T_\eps(t)t\ge0$, $|T_\eps(t)|\le|t|^{1-\beta}$ and, for a.e. $t$,
\begin{equation}\label{eq:Teps}
T'_\eps(t)=\frac{1}{|t|^{\beta}}\Big(G'_\eps(t)-\beta\frac{G_\eps(t)}{|t|}\Big)
\ \ge\ \frac{1-\beta}{|t|^{\beta}}\,\chi_{\{|t|>\eps\}}\ \ge 0 ,
\end{equation}
because $G'_\eps=1=G_\eps(t)/|t|$ on $\{|t|>2\eps\}$ and $G'_\eps=2$,
$G_\eps(t)/|t|\le2$ on $\{\eps<|t|<2\eps\}$.

Fix $i$ and use $\varphi=\psi^2T_\eps(u_{x_i})$ in \eqref{eq:linear-eq}. This is admissible: by
Theorem \ref{thm:DSlin} and Remark \ref{rem:W22}, $u_{x_i}\in W^{1,2}_{\rm loc}$, hence
$\varphi\in W^{1,2}$ with compact support. Since
$D\varphi=\psi^2T'_\eps(u_{x_i})Du_{x_i}+2\psi T_\eps(u_{x_i})D\psi$, using \eqref{eq:ellipticity}
and \eqref{eq:Teps} we obtain
\begin{align*}
\min\{1,p-1\}(1-\beta)\int_{\{|u_{x_i}|>\eps\}}\psi^2\frac{|Du|^{p-2}|Du_{x_i}|^2}{|u_{x_i}|^{\beta}}
&\le 2\big(1+|p-2|\big)\int_{\Omega'}\psi|D\psi|\,|Du|^{p-2}|Du_{x_i}|\,|u_{x_i}|^{1-\beta}\\
&\qquad+c_1\int_{\Omega'}\psi^2|u_{x_i}|^{2-\beta} .
\end{align*}
Since $T_\eps$ vanishes identically on $\{|t|\le\eps\}$, the integral on the right-hand side is in
fact carried by $\{|u_{x_i}|>\eps\}$, where we may write
\[
\psi|D\psi||Du|^{p-2}|Du_{x_i}||u_{x_i}|^{1-\beta}
=\Big(\psi\frac{|Du|^{\frac{p-2}{2}}|Du_{x_i}|}{|u_{x_i}|^{\beta/2}}\Big)
\Big(|D\psi|\,|Du|^{\frac{p-2}{2}}|u_{x_i}|^{1-\frac\beta2}\Big),
\]
and apply Young's inequality, the first factor being absorbed into the left-hand side.
Since $|u_{x_i}|\le|Du|$ we get $|Du|^{p-2}|u_{x_i}|^{2-\beta}\le|Du|^{p-\beta}$ and
$|u_{x_i}|^{2-\beta}\le|Du|^{2-\beta}$, whence, after absorption,
\[
\int_{\{|u_{x_i}|>\eps\}}\psi^2\frac{|Du|^{p-2}|Du_{x_i}|^2}{|u_{x_i}|^{\beta}}\,dx\le
C_1\left[\int_{\Omega'}|D\psi|^2|Du|^{p-\beta}+\int_{\Omega'}\psi^2|Du|^{2-\beta}\right]
\]
with $C_1$ independent of $\eps$. Letting $\eps\to0$ and using Fatou's lemma, and then
$|u_{x_i}|^{-\beta}\ge|Du|^{-\beta}$ together with $Du_{x_i}=0$ a.e. on $\{u_{x_i}=0\}$, we obtain
\[
\int_{\Omega'}\psi^2|Du|^{p-2-\beta}|Du_{x_i}|^2\,dx\le
C_1\left[\int_{\Omega'}|D\psi|^2|Du|^{p-\beta}+\int_{\Omega'}\psi^2|Du|^{2-\beta}\right].
\]
Summing over $i=1,\dots,N$ gives \eqref{eq:cacc2}.
\end{proof}

\begin{lemma}[Localized summability inequality]\label{lem:cacc1}
Let $0\le\sigma<p-1$, put $\beta:=\max\{0,\sigma+2-p\}\in[0,1)$, and let $\psi\in C_c^\infty(\Omega')$
with $0\le\psi\le1$. Then
\begin{equation}\label{eq:cacc1}
\int_{\Omega'}\frac{\psi^2}{|Du|^{\sigma}}\,dx\ \le\ C_2\left[\int_{\Omega'}\psi|D\psi|\,|Du|^{p-1-\sigma}dx
+\int_{\Omega'}\psi^2|Du|^{p-2-\beta}\|D^2u\|^2dx\right],
\end{equation}
with $C_2=C_2(N,p,\sigma,c_0,M)>0$.
\end{lemma}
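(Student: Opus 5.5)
Following Damascelli--Sciunzi, the idea is to test the weak form of \eqref{eq:main} with a function of the type $\varphi=\psi^2/(\eps+|Du|)^{\sigma}$, i.e.\ $\psi^2$ times a negative power of the gradient. The equation supplies a lower bound $f(u)\ge c_0$ on the left, while the derivatives falling on $|Du|^{-\sigma}$ produce second-order terms. Concretely, for $\eps>0$ I set
\[
\varphi_\eps:=\frac{\psi^2}{(\eps+|Du|)^{\sigma}} .
\]
This test function lies in $W^{1,2}$ with compact support in $\Omega'$ by Remark \ref{rem:W22}, since $|Du|\in W^{1,2}_{\rm loc}$ with $|D|Du||\le\|D^2u\|$. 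The weak formulation gives
\[
c_0\int\frac{\psi^2}{(\eps+|Du|)^\sigma}\le\int f(u)\varphi_\eps=\int |Du|^{p-2}(Du,D\varphi_\eps).
\]
Expanding $D\varphi_\eps$ produces two terms, each of which I bound separately.

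The first term is $2\psi(\eps+|Du|)^{-\sigma}|Du|^{p-2}(Du,D\psi)$. It is bounded in absolute value by $2\psi|D\psi||Du|^{p-1}(\eps+|Du|)^{-\sigma}\le2\psi|D\psi||Du|^{p-1-\sigma}$. This is exactly the first term in \eqref{eq:cacc1}.

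The second term is $-\sigma\psi^2(\eps+|Du|)^{-\sigma-1}|Du|^{p-2}(Du,D|Du|)$. It is bounded by
\[
\sigma\psi^2|Du|^{p-1}(\eps+|Du|)^{-\sigma-1}\|D^2u\|\le\sigma\psi^2|Du|^{p-2-\sigma}\|D^2u\|.
\]
Young's inequality then splits it as
\[
\sigma\psi^2|Du|^{p-2-\sigma}\|D^2u\|\le\delta\,\psi^2|Du|^{-\sigma'}+C_\delta\,\psi^2|Du|^{2(p-2-\sigma)+\sigma'}\|D^2u\|^2 .
\]
The plan is to choose the exponents so that $\psi^2|Du|^{-\sigma}$ is what gets absorbed, and so that the remaining factor is dominated, using the boundedness $|Du|\le M$, by $|Du|^{p-2-\beta}\|D^2u\|^2$. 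The exponent arithmetic is the main step. Splitting $|Du|^{p-2-\sigma}\|D^2u\|=|Du|^{-\sigma/2}\cdot|Du|^{p-2-\sigma/2}\|D^2u\|$, Young gives $\delta|Du|^{-\sigma}+C_\delta|Du|^{2p-4-\sigma}\|D^2u\|^2$.

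We need $2p-4-\sigma\ge p-2-\beta$, which is equivalent to $\beta\ge\sigma+2-p$. This holds by the definition of $\beta$. Then $|Du|^{2p-4-\sigma}\le M^{\,p-2-\sigma+\beta}|Du|^{p-2-\beta}$, so $C_2$ depends on $M$. If $\beta=0$, i.e.\ $\sigma\le p-2$, the exponent $p-2-\sigma+\beta$ is still $\ge0$, so the bound by a power of $M$ is legitimate.

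The main obstacle is the absorption step. The absorbed term must be finite, otherwise $\delta\int\psi^2|Du|^{-\sigma}$ cannot be subtracted from the left. So I keep $\eps$ throughout: I work with $(\eps+|Du|)^{-\sigma}$, which is bounded, and perform the Young splitting in the $\eps$-regularized form. The term $\psi^2|Du|^{p-1}(\eps+|Du|)^{-\sigma-1}\|D^2u\|$ is bounded by
\[
(\eps+|Du|)^{-\sigma/2}\cdot|Du|^{p-1}(\eps+|Du|)^{-1-\sigma/2}\|D^2u\|,
\]
and the second factor squared is at most $|Du|^{2p-4-\sigma}\|D^2u\|^2$ as before. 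Here one uses $|Du|\le\eps+|Du|$ with the negative exponent $-1-\sigma/2$. Absorbing with $\delta=c_0/2$ yields \eqref{eq:cacc1} with $(\eps+|Du|)^{-\sigma}$ on the left.

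Finally, letting $\eps\to0$ by monotone convergence gives \eqref{eq:cacc1}, using $|Z_u|=0$ from Theorem \ref{thm:DSlin} and the convention that $\|D^2u\|$-integrands vanish on $Z_u$. The constant $C_2$ depends on $N,p,\sigma,c_0,M$, as claimed.
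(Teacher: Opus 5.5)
Your proof is correct and follows essentially the paper's argument: you test the equation with $\psi^2$ times an $\eps$-regularized negative power of $|Du|$, use $f(u)\ge c_0$, control the second-order term through the exponent condition $\beta\ge\sigma+2-p$ together with $|Du|\le M$, absorb while $\eps>0$ keeps the left-hand side finite, and conclude by monotone convergence. The differences are cosmetic. The paper regularizes with $(|Du|^2+\eps^2)^{-\sigma/2}$ and applies Cauchy--Schwarz to the integrals before Young, whereas you use $(\eps+|Du|)^{-\sigma}$ and a pointwise Young splitting. Also, your case $\beta=0$ is vacuous, since $p<2$ forces $\beta=\sigma+2-p>0$.
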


\begin{proof}
For $\eps>0$ set $g_\eps=(|Du|^2+\eps^2)^{-\sigma/2}$ and $\varphi=\psi^2g_\eps$. Since
$\xi\mapsto(|\xi|^2+\eps^2)^{-\sigma/2}$ is Lipschitz on $\R^N$ and $u\in W^{2,2}_{\rm loc}(\Omega)$
by Remark \ref{rem:W22}, $\varphi$ belongs to $W^{1,2}(\Omega)\cap L^\infty(\Omega)$ and has compact
support; as $|Du|^{p-2}Du\in L^\infty(\Omega')$, it is an admissible test function in
\eqref{eq:main}. Testing \eqref{eq:main} with $\varphi$ and using
\[
Dg_\eps=-\sigma(|Du|^2+\eps^2)^{-\frac\sigma2-1}D^2u\,Du,
\]
we obtain
\[
\int f(u)\psi^2g_\eps
=2\int\psi g_\eps|Du|^{p-2}(Du,D\psi)
-\sigma\int\psi^2(|Du|^2+\eps^2)^{-\frac\sigma2-1}|Du|^{p-2}(Du,D^2u\,Du),
\]
whence, using $g_\eps\le|Du|^{-\sigma}$ and $f(u)\ge c_0$,
\begin{equation}\label{eq:step1}
c_0\int\psi^2g_\eps\ \le\ 2\int\psi|D\psi|\,|Du|^{p-1-\sigma}
+\sigma\int\psi^2(|Du|^2+\eps^2)^{-\frac\sigma2-1}|Du|^{p}\,\|D^2u\| .
\end{equation}
(The first term is finite because $p-1-\sigma>0$ and $|Du|\le M$.) We estimate the last term by
Cauchy--Schwarz:
\[
\sigma\int\psi^2(|Du|^2+\eps^2)^{-\frac\sigma2-1}|Du|^{p}\|D^2u\|
\le\sigma\,\mathcal H^{1/2}\left(\int\psi^2|Du|^{p+2+\beta}(|Du|^2+\eps^2)^{-\sigma-2}\right)^{1/2},
\]
where $\mathcal H:=\int\psi^2|Du|^{p-2-\beta}\|D^2u\|^2$. Since $|Du|\le(|Du|^2+\eps^2)^{1/2}$,
\[
|Du|^{p+2+\beta}(|Du|^2+\eps^2)^{-\sigma-2}\le(|Du|^2+\eps^2)^{\frac{p+\beta-2\sigma-2}{2}} .
\]
The choice $\beta\ge\sigma+2-p$ gives $\frac{p+\beta-2\sigma-2}{2}\ge-\frac\sigma2$, so that, as
$(|Du|^2+\eps^2)^{1/2}\le(M^2+1)^{1/2}$ for $\eps\le1$,
\[
(|Du|^2+\eps^2)^{\frac{p+\beta-2\sigma-2}{2}}\le C(M,p,\sigma)\,(|Du|^2+\eps^2)^{-\frac\sigma2}=C\,g_\eps .
\]
Therefore
\[
c_0\int\psi^2g_\eps\le 2\int\psi|D\psi||Du|^{p-1-\sigma}+C\sigma\,\mathcal H^{1/2}\Big(\int\psi^2g_\eps\Big)^{1/2},
\]
and Young's inequality allows us to absorb the last term, giving
\[
\int\psi^2g_\eps\le C_2\Big[\int\psi|D\psi||Du|^{p-1-\sigma}+\mathcal H\Big]
\]
with $C_2$ independent of $\eps\in(0,1]$. Letting $\eps\downarrow0$, monotone convergence yields
\eqref{eq:cacc1}. Finally $\beta=\max\{0,\sigma+2-p\}<1$ because $\sigma<p-1$.
\end{proof}

\begin{proof}[Proof of Theorem \ref{thm:morrey}]
Let $\tau\in(0,1]$ and $L>0$ be as in \eqref{eq:holder} on $\overline{\Omega'}$; enlarging $L$ if
necessary we may and do assume $L\ge1$. Let $\beta=\max\{0,\sigma+2-p\}<1$ and set
\[
\bar r:=\min\Big\{1,\ \tfrac18\dist(\Omega'',\partial\Omega')\Big\} .
\]
Note that $\bar r\le1$, so that $r^{a}\le r^{b}$ whenever $a\ge b$ and $0<r<\bar r$; this will be used
repeatedly to compare the exponents below. Fix $x_0\in\Omega''$ and $0<r<\bar r$, so that $\overline{B_{4r}(x_0)}\subset\Omega'$. Denote
\[
J(r):=\int_{B_r(x_0)}\frac{dy}{|Du(y)|^{\sigma}},\qquad
S_r:=\max_{\overline{B_{4r}(x_0)}}|Du| .
\]

\emph{Step 1: a first bound.} Choose cut-off functions $\psi,\tilde\psi\in C_c^\infty(\R^N)$ with
\[
\psi\equiv1\text{ on }B_r(x_0),\quad \supp\psi\subset B_{2r}(x_0),\quad|D\psi|\le\tfrac2r,
\]
\[
\tilde\psi\equiv1\text{ on }B_{2r}(x_0),\quad \supp\tilde\psi\subset B_{4r}(x_0),\quad|D\tilde\psi|\le\tfrac2r .
\]
By Lemma \ref{lem:cacc1} and Lemma \ref{lem:cacc2} (the latter applied with $\tilde\psi$, which
dominates $\psi$),
\[
J(r)\le C_2\left[\frac2r\int_{B_{2r}(x_0)}|Du|^{p-1-\sigma}
+C_1\Big(\frac{4}{r^2}\int_{B_{4r}(x_0)}|Du|^{p-\beta}+\int_{B_{4r}(x_0)}|Du|^{2-\beta}\Big)\right].
\]
Since $p-1-\sigma>0$, $p-\beta>0$ and $2-\beta>0$, all the integrands are bounded by the
corresponding powers of $S_r$, and therefore
\begin{equation}\label{eq:firstbound}
J(r)\ \le\ C_3\Big[r^{N-1}S_r^{\,p-1-\sigma}+r^{N-2}S_r^{\,p-\beta}+r^{N}S_r^{\,2-\beta}\Big],
\end{equation}
with $C_3=C_3(N,p,\sigma,\beta,c_0,c_1,M)$.

\emph{Step 2: dichotomy.} We distinguish two cases.

\emph{Case 1: $|Du(x_0)|\ge 2L(4r)^{\tau}$.} By \eqref{eq:holder}, for every $y\in B_{4r}(x_0)$,
\[
|Du(y)|\ \ge\ |Du(x_0)|-L(4r)^{\tau}\ \ge\ \tfrac12|Du(x_0)|\ \ge\ L(4r)^{\tau}.
\]
Hence
\[
J(r)\le\big(L(4r)^{\tau}\big)^{-\sigma}|B_r|=C\,r^{\,N-\tau\sigma}.
\]
Since $\sigma<p-1<1$ and $\tau\le1$, we have $\tau\sigma<1$, so
\[
J(r)\le C\,r^{\,N-2+(2-\tau\sigma)},\qquad 2-\tau\sigma>1 .
\]

\emph{Case 2: $|Du(x_0)|<2L(4r)^{\tau}$.} Then, again by \eqref{eq:holder}, for $y\in B_{4r}(x_0)$,
\[
|Du(y)|\le|Du(x_0)|+L(4r)^{\tau}\le 3L(4r)^{\tau},\qquad\text{so } S_r\le C_4\,r^{\tau} .
\]
Plugging this into \eqref{eq:firstbound},
\[
J(r)\le C\Big[r^{\,N-1+\tau(p-1-\sigma)}+r^{\,N-2+\tau(p-\beta)}+r^{\,N+\tau(2-\beta)}\Big]
\le C\,r^{\,N-2+\min\{1,\,\tau(p-\beta)\}},
\]
where we used $r<\bar r$ and that the three exponents are not less than $N-2+1$, $N-2+\tau(p-\beta)$ and
$N-2+2$ respectively.

\emph{Step 3: conclusion.} In both cases
\[
J(r)\le C\,r^{\,N-2+\kappa},\qquad \kappa:=\min\{1,\ 2-\tau\sigma,\ \tau(p-\beta)\}
=\min\{1,\ \tau(p-\beta)\}>0 ,
\]
(the middle quantity being $>1$), with $C$ and $\kappa$ independent of $x_0\in\Omega''$ and of
$0<r<\bar r$. This is \eqref{eq:morrey}.
\end{proof}

\begin{corollary}\label{cor:summ}
Under the hypotheses of Theorem \ref{thm:morrey}, for every $\Omega''\Subset\Omega$ and every
$\sigma<p-1$ one has $\int_{\Omega''}|Du|^{-\sigma}dx<\infty$; in particular $|Z_u|=0$. Thus the
summability estimate of \cite[Theorem 1.1]{DS1} holds for local solutions, and with the additional
Morrey information \eqref{eq:morrey}.
\end{corollary}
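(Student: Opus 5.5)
The corollary follows by covering $\overline{\Omega''}$ with finitely many small balls and applying Theorem \ref{thm:morrey} on each. Given $\Omega''\Subset\Omega$, we first choose an intermediate domain $\Omega'$ with $\Omega''\Subset\Omega'\Subset\Omega$; for instance, a finite union of balls covering $\overline{\Omega''}$ whose doubles lie in $\Omega$, or a small neighbourhood of $\overline{\Omega''}$. Then we choose a third domain $\Omega'''$ with $\Omega''\Subset\Omega'''\Subset\Omega'$. The Morrey bound will be applied with $\Omega'''$ in the role of the inner domain and $\Omega'$ in the role of the outer one. This gives $\kappa>0$, $\bar r>0$ and $C>0$ such that
\[
\int_{B_r(x_0)}|Du|^{-\sigma}\,dy\le C\,r^{N-2+\kappa}<\infty
\qquad\text{for all }x_0\in\Omega''',\ 0<r<\bar r .
\]
Here $\bar r$ depends only on $\dist(\Omega''',\partial\Omega')$, and the constant $C$ is controlled because the four quantities listed in Theorem \ref{thm:morrey} are finite on $\overline{\Omega'}$. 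This finiteness is guaranteed by Theorem \ref{thm:C1alpha} together with the facts $c_0>0$ and $c_1<\infty$ recorded at the start of Section \ref{sec:morrey}.

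Next we use compactness. Fix $r=\bar r/2$. Since $\overline{\Omega''}\subset\Omega'''$ is compact, it is covered by finitely many balls $B_r(x_k)$, $k=1,\dots,K$, with centres $x_k\in\overline{\Omega''}\subset\Omega'''$. Summing the estimate over these balls gives
\[
\int_{\Omega''}|Du|^{-\sigma}\,dx\le\sum_{k=1}^K\int_{B_r(x_k)}|Du|^{-\sigma}\le K\,C\,r^{N-2+\kappa}<\infty .
\]
This is the claimed summability for every $\sigma<p-1$, and in particular for $\sigma=0$. The proof of Theorem \ref{thm:morrey} relies only on Lemmas \ref{lem:cacc2} and \ref{lem:cacc1}, which in turn use only the local statements of Theorem \ref{thm:DSlin}. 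So the estimate holds for local solutions, with no boundary condition.

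For $|Z_u|=0$ we take some $\sigma\in(0,p-1)$, which exists since $p>1$. The integrand $|Du|^{-\sigma}$ equals $+\infty$ on $Z_u$, so finiteness of the integral forces $|Z_u\cap\Omega''|=0$. Since $\Omega''\Subset\Omega$ is arbitrary, exhausting $\Omega$ by countably many such sets gives $|Z_u|=0$.

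One point needs care to avoid circularity. Section \ref{sec:morrey} uses the convention that integrands vanish a.e. on $Z_u$; this concerns only $D^2u=0$ a.e. on $Z_u$, which holds regardless of the measure of $Z_u$. The argument should therefore be checked so that nothing in Lemmas \ref{lem:cacc2} and \ref{lem:cacc1} assumes $|Z_u|=0$. In Lemma \ref{lem:cacc1}, the function $g_\eps$ is bounded and finite everywhere, and the passage to the limit is by monotone convergence, which is valid even where the limit is $+\infty$. So the bound on $\int\psi^2|Du|^{-\sigma}$, understood with the value $+\infty$ on $Z_u$, holds without knowing $|Z_u|=0$. This check is the main obstacle. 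Once it is confirmed, the Morrey estimate is proved independently of \cite{Lou}, and the corollary follows.
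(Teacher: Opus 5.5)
Your proposal is correct and is essentially the paper's argument. You cover $\overline{\Omega''}$ by finitely many balls of radius less than $\bar r$ with admissible centres, sum \eqref{eq:morrey}, and deduce $|Z_u\cap\Omega''|=0$ from the integrability of $|Du|^{-\sigma}$ for some $\sigma\in(0,p-1)$. The auxiliary domain $\Omega'''$ and your check that Lemmas \ref{lem:cacc2} and \ref{lem:cacc1} (via monotone convergence of $g_\eps$) never use $|Z_u|=0$ are extra care the paper leaves implicit, and both are sound.
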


\begin{proof}
Cover $\overline{\Omega''}$ by finitely many balls $B_{r}(x_0)$ with $x_0\in\Omega''$ and
$r<\bar r$, and sum \eqref{eq:morrey}. The set $Z_u$ is contained in $\{|Du|^{-\sigma}=+\infty\}$,
which is negligible for an integrable function.
\end{proof}

\begin{remark}\label{rem:comparison}
Theorem \ref{thm:morrey} should be compared with \eqref{eq:DSsummability}. Choosing $y=x_0$ in
\eqref{eq:DSsummability} and using $|x-x_0|\le r$ on $B_r(x_0)$ gives only
$\int_{B_r(x_0)}|Du|^{-\sigma}\le Cr^{\gamma}$ with $\gamma<N-2$, i.e. a Morrey exponent strictly
\emph{below} $N-2$; on the other hand H\"older's inequality combined with
$|Du|^{-\sigma}\in L^{t}$, $t<\frac{p-1}{\sigma}$, only gives the exponent $N(1-\frac1t)$, which
exceeds $N-2$ exactly when $t>\frac N2$, i.e. under the old condition \eqref{eq:oldcondition}. The
gain in Theorem \ref{thm:morrey} comes from the interplay between the equation (which forces
$|Du|$ to be small on \emph{small} balls centred near $Z_u$, at a rate controlled by the $C^{1,\tau}$
norm) and the second-order estimate \eqref{eq:cacc2}: the term $r^{-2}\int_{B_{4r}}|Du|^{p-\beta}$,
which is the only obstruction to an exponent larger than $N-2$, is precisely the term that becomes
small near the critical set.
\end{remark}

\begin{remark}
The proof shows that $\kappa$ can be taken to be $\min\{1,\tau(p-\beta)\}$, where $\tau$ is any
H\"older exponent of $Du$. Sharper values of $\tau$ (see \cite{Teix}, and \cite{Sci2} for the
consequences on the summability estimates) give sharper values of $\kappa$, hence a better exponent
$q$ in Theorem \ref{thm:sobolev} below, but they do not affect the range of $p$ for which our
results hold: only $\kappa>0$ matters.
\end{remark}

%%%%%%%%%%%%%%%%%%%%%%%%%%%%%%%%%%%%%%%%%%%%%%%%%%%%%%%%%%%%%%%%%%%%%%%%%%%%%%%
\section{Sobolev inequality with a gain, and Moser iteration in the measure \texorpdfstring{$\mu$}{mu}}\label{sec:sobolev}
%%%%%%%%%%%%%%%%%%%%%%%%%%%%%%%%%%%%%%%%%%%%%%%%%%%%%%%%%%%%%%%%%%%%%%%%%%%%%%%

Throughout this section $\Omega'\subset\R^N$ is a bounded domain and $\rho$ is a weight satisfying
condition $(\mathcal M_\kappa)$ of Definition \ref{def:Mkappa}, with constants
$\lambda_0,\Lambda_0,\kappa$. We write $d\mu=\rho\,dx$ on $\Omega'$, so that
\[
\|w\|_{L^{r}(\Omega',\mu)}^{\,r}=\int_{\Omega'}|w|^{r}\rho\,dx\qquad(1\le r<\infty);
\]
in particular $\|Dw\|_{L^2(\Omega',\mu)}^2=\int_{\Omega'}\rho\,|Dw|^2dx$ is the norm of the space
$H^{1,2}_{\rho}(\Omega')$, and the two symbols $\rho$ and $\mu$ refer to one and the same weight
throughout. Note that
$(\mathcal M_\kappa)$ forces $\mu(\Omega')<\infty$, i.e. $\rho\in L^1(\Omega')$.

\subsection{The Sobolev inequality}

\begin{theorem}\label{thm:sobolev}
Assume $\rho$ satisfies $(\mathcal M_\kappa)$ in $\Omega'$ and set
\begin{equation}\label{eq:qdef}
q:=\begin{cases}\ \dfrac{2(N-2+\kappa)}{N-2}=2+\dfrac{2\kappa}{N-2}, & N\ge3,\\[2mm]
\ 2+\kappa, & N=2 .\end{cases}
\end{equation}
Then $q>2$ and there exists $C_S=C_S(N,\kappa,\lambda_0,\Lambda_0,\Omega')>0$ such that
\begin{equation}\label{eq:sobolev}
\|w\|_{L^{q}(\Omega',\mu)}\ \le\ C_S\,\|Dw\|_{L^2(\Omega',\mu)}
\qquad\forall\,w\in H^{1,2}_{0,\rho}(\Omega') .
\end{equation}
Moreover, for every ball $B\subseteq\Omega'$ and every $w\in H^{1,2}_{\rho}(B)$,
\begin{equation}\label{eq:sobolev-mean}
\|w-\overline w_B\|_{L^{q}(B,\mu)}\ \le\ C_S'\,\|Dw\|_{L^2(B,\mu)},\qquad
\overline w_B:=\frac1{|B|}\int_Bw\,dx ,
\end{equation}
with $C_S'$ depending in addition on $\operatorname{diam}(\Omega')$ but not on $B$.
\end{theorem}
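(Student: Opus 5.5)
The plan is to derive \eqref{eq:sobolev} from Adams' trace inequality (Theorem \ref{thm:adams}), applied to the measure $\nu:=\mu=\rho\chi_{\Omega'}dx$, which is a finite Borel measure on $\R^N$ by $(\mathcal M_\kappa)$. The weight then enters the left-hand side through Adams and the right-hand side through the lower bound $\rho\ge\lambda_0$.

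\emph{Case $N\ge3$.} With $q$ as in \eqref{eq:qdef} we have $\frac{q(N-2)}{2}=N-2+\kappa$, so the Morrey condition of Theorem \ref{thm:adams} is exactly the one in Definition \ref{def:Mkappa}. For $w\in C_c^\infty(\Omega')$ we use the pointwise representation
\[
|w(x)|\le c_N\,I_1(|Dw|)(x),
\]
which holds for every $x$ by integrating along rays. Since $w$ is continuous, this bound holds $\mu$-everywhere and not merely Lebesgue-a.e., so no capacity issue arises. Theorem \ref{thm:adams} with $g=|Dw|$ gives $\|w\|_{L^q(\mu)}\le c_NA\|Dw\|_{L^2(dx)}$. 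We then use $\rho\ge\lambda_0$ to get $\|Dw\|_{L^2(dx)}\le\lambda_0^{-1/2}\|Dw\|_{L^2(\Omega',\mu)}$. Finally we extend to $H^{1,2}_{0,\rho}(\Omega')$ by density. If $w_k\to w$ in that norm, then $w_k\to w$ in $H^1_0$ because $\rho\ge\lambda_0$, so a subsequence converges a.e., and hence $\mu$-a.e. since $\mu\ll dx$. Fatou's lemma then finishes the extension.

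\emph{Case $N=2$.} Here $q=2+\kappa$ and Theorem \ref{thm:adams} does not apply directly. Since $\Omega'$ is bounded, $\mu(B_r(x))\le\Lambda_0r^{\kappa}$ for all $r>0$ gives $\mu(B_r(x))\le C r^{\kappa'}$ for any $\kappa'\le\kappa$ on the relevant scales $r\le\operatorname{diam}\Omega'$, and trivially for larger $r$. The clean route is to add a dummy variable: set $\tilde w(x,t)=w(x)\phi(t)$ in $\R^3$ with $\phi\in C_c^\infty$, and take $\tilde\nu=\mu\otimes dt$. Then $\tilde\nu(B_r)\le C r^{1+\kappa}$, which is the $N=3$ condition with exponent $q=2+2\kappa$. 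This yields the inequality with exponent $2+2\kappa$ in the $x$ variable, and by Hölder (since $\mu$ is finite) any smaller exponent, in particular $2+\kappa$. The alternative is to use directly the $L^2\to L^q(\nu)$ trace theorem for $I_1$ in the plane with a Morrey bound $r^{\kappa}$, in the form stated in \cite[Theorem 7.2.1]{AdamsHedberg}.

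\emph{Mean-value version \eqref{eq:sobolev-mean}.} For a ball $B$ and $x\in B$ we use the classical estimate
\[
|w(x)-\overline w_B|\le c_N\int_B\frac{|Dw(y)|}{|x-y|^{N-1}}dy=c_N I_1(|Dw|\chi_B)(x),
\]
first for smooth $w$ and then by density, which is valid on balls. Applying Adams with $\nu=\mu\llcorner B$, which inherits the Morrey bound with the same $\Lambda_0$, gives
\[
\|w-\overline w_B\|_{L^q(B,\mu)}\le C\|Dw\|_{L^2(B)}\le C\lambda_0^{-1/2}\|Dw\|_{L^2(B,\mu)}.
\]
The constant comes only from Adams and $\lambda_0$; the dependence on $\operatorname{diam}\Omega'$ enters in the $N=2$ reduction, and possibly through the constant in the ray estimate.

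\emph{Main obstacle.} I expect the delicate step to be the passage from smooth functions to general $w\in H^{1,2}_\rho(B)$ in \eqref{eq:sobolev-mean}, where no zero trace is available. The plan is to approximate $w$ in $H^{1,2}(B)$ by functions in $C^\infty(\overline B)$, which is possible since $B$ is a ball and $w\in H^{1,2}(B)$ because $\rho\ge\lambda_0$. The difficulty is that convergence of $\int\rho|Dw_k|^2$ to $\int\rho|Dw|^2$ may fail. To avoid needing it, I would apply the pointwise estimate to $w$ itself: the inequality $|w-\overline w_B|\le c I_1(|Dw|\chi_B)$ holds a.e. for Sobolev $w$ on convex sets. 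Then Adams with $\|Dw\|_{L^2(B,dx)}$ on the right, followed by $\rho\ge\lambda_0$, gives the result with no approximation of the weighted norm needed.
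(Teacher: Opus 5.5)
Your proposal is correct except for one case: \eqref{eq:sobolev-mean} in dimension $N=2$ is not actually proved.

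\textbf{Where you agree with the paper.} For $N\ge3$ your argument is the paper's. For \eqref{eq:sobolev} you use the ray representation, Theorem~\ref{thm:adams} with $\nu=\mu$, the bound $\rho\ge\lambda_0$, and density. For \eqref{eq:sobolev-mean} you apply the a.e.\ estimate of Gilbarg--Trudinger's Lemma~7.16 directly to $w$; this is also how the paper handles it, and it is how your last paragraph removes the approximation issue.

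\textbf{Where you differ: \eqref{eq:sobolev} for $N=2$.} The paper uses the $L^s$ form of Adams' theorem with $s\in(1,2)$ chosen so that $q(2-s)/s\le\kappa$. It then applies H\"older's inequality $\|Dw\|_{L^s(\Omega')}\le C\|Dw\|_{L^2(\Omega')}$. You instead lift to $\R^3$, and the lift is correct: $\tilde\nu(B_r)\le2\Lambda_0r^{1+\kappa}$ is the $N=3$ hypothesis for $q=2+2\kappa$. Its advantage is that it uses only the $L^2$, $N\ge3$ statement actually quoted as Theorem~\ref{thm:adams}. You should add one step. Since $|D\tilde w|^2=\phi^2|Dw|^2+(\phi')^2w^2$, the term $\|w\|_{L^2(\Omega')}$ must be absorbed by Poincar\'e's inequality on the bounded set $\Omega'$.

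\textbf{The gap: \eqref{eq:sobolev-mean} for $N=2$.}
\begin{enumerate}
\item Your lift needs $\tilde w=w\phi\in C_c^\infty(\R^3)$, i.e.\ $w$ vanishing near $\partial\Omega'$. It therefore does not apply to $w-\overline w_B$ on a ball.
\item Your mean-value paragraph instead applies Adams' inequality in the plane to $g=|Dw|\chi_B\in L^2(\R^2)$ with $\alpha=1$. That is the borderline case $2\alpha=N$.
\item This case is excluded from Theorem~\ref{thm:adams}. It is also excluded from the Adams--Hedberg trace theorem you offer as an alternative, which requires $\alpha s<N$.
\end{enumerate}

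\textbf{How to repair it.} Either of the following works.
\begin{enumerate}
\item Argue as the paper does. With $s\in(1,2)$ as above, Adams' theorem and H\"older give $\|w-\overline w_B\|_{L^q(B,\mu)}\le C\|Dw\|_{L^s(B)}\le C|B|^{\frac1s-\frac12}\|Dw\|_{L^2(B)}$. This is where $\operatorname{diam}(\Omega')$ enters.
\item Lift the potential rather than the function.
\begin{itemize}
\item Let $0\le g$ be supported in $B$, set $D:=\operatorname{diam}B$ and $\tilde g:=g\otimes\chi_{[0,D]}$, and let $I^{(3)}_1$ denote the Riesz potential in $\R^3$.
\item For $x,y\in B$ and $t\in[0,D]$ one has $\int_0^D\frac{dt'}{|x-y|^2+(t-t')^2}\ge\frac1{2|x-y|}$. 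Hence $I_1g(x)\le2\,I^{(3)}_1\tilde g(x,t)$.
\item Average over $t\in[0,D]$ and apply Theorem~\ref{thm:adams} in $\R^3$ to $\tilde\nu=\mu\otimes dt$. This gives $\|I_1g\|_{L^{q}(B,\mu)}\le2A\,D^{\frac12-\frac1q}\|g\|_{L^2(\R^2)}$ with $q=2+2\kappa$.
\end{itemize}
After this, your argument goes through, including the H\"older step down to $q=2+\kappa$.
\end{enumerate}
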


\begin{proof}
Assume first $N\ge3$. With $q$ as in \eqref{eq:qdef} we have $\frac{q(N-2)}{2}=N-2+\kappa$, so
$(\mathcal M_\kappa)$ is exactly the hypothesis of Theorem \ref{thm:adams} for the measure $\mu$.
For $w\in C_c^\infty(\Omega')$ the classical potential estimate gives
\[
|w(x)|\le\frac{1}{N\omega_N}\,I_1(|Dw|)(x)\qquad\forall x\in\R^N,
\]
where $\omega_N=|B_1|$ (this is elementary: extend $w$ by zero and integrate $Dw$ along rays). Hence, by Theorem \ref{thm:adams} applied with
$g=|Dw|\chi_{\Omega'}$,
\[
\|w\|_{L^q(\Omega',\mu)}\le \frac{A}{N\omega_N}\,\|Dw\|_{L^2(\Omega')}
\le \frac{A}{N\omega_N}\,\lambda_0^{-1/2}\,\|Dw\|_{L^2(\Omega',\mu)} ,
\]
where in the last step we used $\rho\ge\lambda_0$. Inequality \eqref{eq:sobolev} then follows for
all $w\in H^{1,2}_{0,\rho}(\Omega')$ by density, using again $\rho\ge\lambda_0$ (which guarantees
$H^{1,2}_{0,\rho}(\Omega')\subseteq H^{1,2}_0(\Omega')$ and that a Cauchy sequence in the weighted
norm is Cauchy in the unweighted one). For \eqref{eq:sobolev-mean} one starts instead from
\[
|w(x)-\overline w_B|\le\frac{(\operatorname{diam}B)^N}{N|B|}\int_B\frac{|Dw(y)|}{|x-y|^{N-1}}\,dy
\qquad\text{for a.e. }x\in B,
\]
see \cite[Lemma 7.16]{GT}, and argues in the same way.

If $N=2$ the exponent $p=2$ is not admissible in Theorem \ref{thm:adams}, and we argue as follows.
Fix $q:=2+\kappa>2$. Since $\Omega'$ is bounded and $\mu(\Omega')<\infty$, condition
$(\mathcal M_\kappa)$ (which for $N=2$ reads $\mu(B_r(x))\le\Lambda_0r^{\kappa}$) implies, for every
$0<\kappa'\le\kappa$,
\[
\mu(B_r(x))\le\Lambda_0'\,r^{\kappa'}\qquad\forall x\in\R^2,\ r>0 ,
\]
with $\Lambda_0'=\max\{\Lambda_0,\mu(\Omega')\}$: indeed $r^{\kappa}\le r^{\kappa'}$ for $r\le1$,
while for $r>1$ one simply uses $\mu(B_r)\le\mu(\Omega')\le\mu(\Omega')r^{\kappa'}$. Choose
$s\in(1,2)$ so
close to $2$ that
\[
\frac{q(2-s)}{s}\ \le\ \kappa .
\]
Then $\mu(B_r(x))\le\Lambda_0'r^{\frac{q(N-s)}{s}}$ with $N=2$, and $q>2>s$, so Theorem
\ref{thm:adams} (applied with $\alpha=1$ and exponent $s$ in place of $2$; note $\alpha s=s<2=N$)
gives $\|I_1g\|_{L^q(\mu)}\le A\|g\|_{L^s(\R^2)}$. Consequently, for $w\in C_c^\infty(\Omega')$,
\[
\|w\|_{L^q(\Omega',\mu)}\le \tfrac{A}{2\omega_2}\|Dw\|_{L^{s}(\Omega')}
\le C(s,\Omega')\|Dw\|_{L^2(\Omega')}\le C\lambda_0^{-1/2}\|Dw\|_{L^2(\Omega',\mu)} ,
\]
by H\"older's inequality on the bounded set $\Omega'$. Inequality \eqref{eq:sobolev-mean} is
obtained in the same way from \cite[Lemma 7.16]{GT}.
\end{proof}

\begin{remark}
It is instructive to compare \eqref{eq:sobolev} with the corresponding statements in the
literature. In \cite[Theorem 2.2]{DS2}, \cite[Theorem 4.1]{Sci2} and \cite[Theorem 2.2]{MMS} one
finds a weighted Sobolev inequality $\|w\|_{L^{q}(\Omega')}\le C\|Dw\|_{L^2(\Omega',\mu)}$ with the
\emph{Lebesgue} measure on the left; that inequality is trivially true for $1<p<2$ (because
$\rho\ge\lambda_0$), and it is precisely this triviality which is misleading: the Lebesgue norm on
the left cannot absorb the term $\int\rho\,w^2|D\eta|^2$ produced on the right by the Caccioppoli
inequality. Estimate \eqref{eq:sobolev} carries the \emph{same} measure $\mu$ on both sides, and this
is what makes the iteration self-contained.
\end{remark}

\subsection{The weak Harnack comparison inequality}

We now prove Theorem \ref{thm:abstract}. Set $w:=v-u\ge0$ and, for $\varsigma>0$,
$w_\varsigma:=w+\varsigma$. The starting point is the Caccioppoli inequality, which is completely
standard and which we recall for the reader's convenience.

\begin{lemma}\label{lem:cacc-comp}
Let $u,v\in C^1(\Omega)$ satisfy \eqref{eq:scp-hyp} in $B_{6\delta}(x)\subset\Omega'$, set
$\rho=(|Du|+|Dv|)^{p-2}$ and let $\eta\in C_c^1(B_{6\delta}(x))$, $\eta\ge0$. Then for every $\beta<0$,
$\beta\ne-1$, setting $\widetilde w=w_\varsigma^{\frac{\beta+1}{2}}$ and $r=\beta+1$,
\begin{equation}\label{eq:cacc-comp}
\int\rho\,\eta^2|D\widetilde w|^2\,dx\ \le\ \dot C\,r^2\int \widetilde w^{\,2}\big(\eta^2+\rho|D\eta|^2\big)\,dx ,
\end{equation}
while for $\beta=-1$, i.e. $\widetilde w=\log w_\varsigma$,
\begin{equation}\label{eq:cacc-log}
\int\rho\,\eta^2|D\widetilde w|^2\,dx\ \le\ \dot C\int\big(\eta^2+\rho|D\eta|^2\big)\,dx ,
\end{equation}
with $\dot C=\dot C(p,\Lambda,\|v\|_{L^\infty},\|Du\|_{L^\infty},\|Dv\|_{L^\infty})$ independent of
$\varsigma$ and of $\beta$ as long as $|\beta|\ge c>0$.
\end{lemma}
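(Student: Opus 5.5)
The plan is the classical one: test the difference of the two inequalities in \eqref{eq:scp-hyp} with $\varphi=\eta^2 w_\varsigma^{\beta}$, exploit the monotonicity inequalities \eqref{eq:vector}--\eqref{eq:vector2}, and absorb the cross term by Young's inequality.

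\emph{Admissibility and the basic identity.} Since $u,v\in C^1$ and $w_\varsigma\ge\varsigma>0$, the function $\varphi=\eta^2 w_\varsigma^\beta$ is Lipschitz with compact support in $B_{6\delta}(x)$. It is nonnegative, so it is admissible in the weak form of \eqref{eq:scp-hyp}. Writing $A(\xi)=|\xi|^{p-2}\xi$, this gives
\[
\int (A(Dv)-A(Du),D\varphi)\,dx\ \ge\ -\Lambda\int w\,\varphi\,dx .
\]
We have $D\varphi=\beta\eta^2w_\varsigma^{\beta-1}Dw+2\eta w_\varsigma^\beta D\eta$ with $\beta<0$. Moving the first term to the other side, and using \eqref{eq:vector} with $\eta=Dv$, $\eta'=Du$ (the integrand vanishes where $Du=Dv=0$), we get
\[
|\beta|\hat C\int\rho\,\eta^2w_\varsigma^{\beta-1}|Dw|^2\le 2\check C\int\rho\,\eta|D\eta|\,w_\varsigma^{\beta}|Dw|+|\Lambda|\int w\,\eta^2w_\varsigma^{\beta}.
\]
The first term on the right came from \eqref{eq:vector2}. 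On the last term, $w\le w_\varsigma$ bounds $w\,w_\varsigma^\beta$ by $w_\varsigma^{\beta+1}$.

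\emph{Young and rescaling.} Write the middle integrand as $\big(\rho^{1/2}\eta w_\varsigma^{(\beta-1)/2}|Dw|\big)\big(\rho^{1/2}|D\eta|w_\varsigma^{(\beta+1)/2}\big)$. Young's inequality with a parameter proportional to $|\beta|$ then absorbs half of the left-hand side and yields
\[
\int\rho\,\eta^2w_\varsigma^{\beta-1}|Dw|^2\le C\Big(\tfrac{1}{\beta^2}+\tfrac1{|\beta|}\Big)\int w_\varsigma^{\beta+1}\big(\rho|D\eta|^2+\eta^2\big).
\]
For $\beta\ne-1$ we have $|D\widetilde w|^2=\frac{r^2}{4}w_\varsigma^{\beta-1}|Dw|^2$. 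Multiplying by $r^2/4$ gives \eqref{eq:cacc-comp} with a factor $r^2(\beta^{-2}+|\beta|^{-1})$, which is bounded by $\dot C r^2$ as long as $|\beta|\ge c$. The remaining dependence on $\|v\|_\infty$, $\|Du\|_\infty$, $\|Dv\|_\infty$ enters only when one arranges that the zero-order term $\eta^2\widetilde w^2$ appears with weight $1$ rather than $\rho$; since $\rho\ge\lambda_0$, one could also write it against $\mu$. For $\beta=-1$ we have $w_\varsigma^{\beta+1}=1$ and $|D\log w_\varsigma|^2=w_\varsigma^{-2}|Dw|^2$, which gives \eqref{eq:cacc-log} directly.

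\emph{Where care is needed.} The only delicate point is the sign of the $\Lambda$ term. If $\Lambda\ge0$, the term $-\Lambda\int w\varphi$ has a favourable sign, or in any case it is bounded as above. If $\Lambda<0$, we use $w\,w_\varsigma^\beta\le w_\varsigma^{\beta+1}$, which holds for every $\beta$, so no uniform bound on $w$ is needed. The constants do not depend on $\varsigma$ because $\varsigma$ enters only through $w_\varsigma$. I expect no genuine obstacle here; the bookkeeping of the $\beta$-dependence near $\beta=0$ is the only thing to watch, and it is excluded by the hypothesis $|\beta|\ge c$.
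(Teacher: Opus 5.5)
Your proposal is correct and follows the paper's proof essentially step for step: test with $\eta^2w_\varsigma^{\beta}$, use \eqref{eq:vector}--\eqref{eq:vector2}, absorb the cross term by Young's inequality to get the factor $C(\beta^{-2}+|\beta|^{-1})$, and rescale via $|D\widetilde w|^2=\frac{r^2}{4}w_\varsigma^{\beta-1}|Dw_\varsigma|^2$. One small slip in your commentary: the $\Lambda$-term has the favourable sign when $\Lambda\le0$, not $\Lambda\ge0$ (for $\Lambda>0$ it is a positive term on the right that must be bounded), but your displayed bound by $|\Lambda|\int\eta^2w_\varsigma^{\beta+1}$ covers both signs, so nothing in the argument changes.
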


\begin{proof}
This is \cite[(A.1)--(A.9)]{DS2} and \cite[(3.4)--(3.15)]{MMS}; we recall the computation, both
because it is short and because we shall need its exact shape. Hypothesis \eqref{eq:scp-hyp} means
that, for every nonnegative test function $\phi$,
\[
\int_\Omega\big(|Du|^{p-2}Du-|Dv|^{p-2}Dv,\ D\phi\big)dx\ \le\ \Lambda\int_\Omega(v-u)\phi\,dx .
\]
Take $\phi=\eta^2w_\varsigma^{\beta}$ with $\beta<0$, which is admissible since
$w_\varsigma\ge\varsigma>0$ and $w_\varsigma\in C^1$. Writing
$F:=|Dv|^{p-2}Dv-|Du|^{p-2}Du$ and
$D\phi=2\eta w_\varsigma^{\beta}D\eta+\beta\eta^2w_\varsigma^{\beta-1}Dw_\varsigma$, and using
$Dw_\varsigma=Dv-Du$ together with \eqref{eq:vector}, we get
$\beta\,(-F,Dw_\varsigma)\ge|\beta|\hat C\rho|Dw_\varsigma|^2$, whence
\[
|\beta|\hat C\int\eta^2w_\varsigma^{\beta-1}\rho|Dw_\varsigma|^2\,dx
\ \le\ 2\int\eta w_\varsigma^{\beta}|F||D\eta|\,dx+|\Lambda|\int\eta^2w_\varsigma^{\beta+1}dx ,
\]
where we bounded $\Lambda\int(v-u)\phi\le|\Lambda|\int\eta^2w_\varsigma^{\beta+1}$ using
$0\le v-u\le w_\varsigma$. By \eqref{eq:vector2}, $|F|\le\check C\rho|Dw_\varsigma|$, and Young's
inequality applied to
\[
2\check C\int\big(\eta w_\varsigma^{\frac{\beta-1}{2}}\rho^{\frac12}|Dw_\varsigma|\big)
\big(w_\varsigma^{\frac{\beta+1}{2}}\rho^{\frac12}|D\eta|\big)dx
\]
allows us to absorb half of the left-hand side, giving
\[
\int\eta^2w_\varsigma^{\beta-1}\rho|Dw_\varsigma|^2dx\ \le\
\frac{C}{|\beta|}\Big(1+\frac1{|\beta|}\Big)\int w_\varsigma^{\beta+1}\big(\eta^2+\rho|D\eta|^2\big)dx .
\]
Since $|D\widetilde w|^2=\frac{r^2}{4}w_\varsigma^{\beta-1}|Dw_\varsigma|^2$ and
$\widetilde w^{\,2}=w_\varsigma^{\beta+1}$ when $\beta\ne-1$, this is \eqref{eq:cacc-comp}; the
factor $\frac{1}{|\beta|}(1+\frac1{|\beta|})$ is bounded as long as $|\beta|\ge c>0$ and has been
absorbed into $\dot C$. For $\beta=-1$ one has $w_\varsigma^{\beta+1}\equiv1$ and
$|D\widetilde w|^2=w_\varsigma^{-2}|Dw_\varsigma|^2$, which gives \eqref{eq:cacc-log}.
\end{proof}

\begin{proof}[Proof of Theorem \ref{thm:abstract}]
Let $\rho=(|Du|+|Dv|)^{p-2}$ satisfy $(\mathcal M_\kappa)$ in $\Omega'$ and let $q>2$ and $C_S$ be
as in Theorem \ref{thm:sobolev}. Set $\chi:=q/2>1$.

\emph{Step 1: the basic iteration inequality in $\mu$.} Since $\rho\ge\lambda_0$ we have, for any
$\eta\in C_c^1$ with $\|D\eta\|_\infty\le K$,
\begin{equation}\label{eq:passtomu}
\int \widetilde w^{2}\big(\eta^2+\rho|D\eta|^2\big)dx
\le \big(\lambda_0^{-1}+1\big)\int \widetilde w^{2}\big(\eta+|D\eta|\big)^2\rho\,dx
=\big(\lambda_0^{-1}+1\big)\big\|\widetilde w(\eta+|D\eta|)\big\|_{L^2(\mu)}^2 .
\end{equation}
On the other hand, $\eta\widetilde w\in H^{1,2}_{0,\rho}$ and, by Theorem \ref{thm:sobolev},
\[
\|\eta\widetilde w\|_{L^{q}(\mu)}^2\le C_S^2\int\rho|D(\eta\widetilde w)|^2dx
\le 2C_S^2\Big[\int\rho\eta^2|D\widetilde w|^2+\int\rho\widetilde w^2|D\eta|^2\Big].
\]
Combining with \eqref{eq:cacc-comp} and \eqref{eq:passtomu} we obtain the fundamental inequality
\begin{equation}\label{eq:fundamental}
\|\eta\widetilde w\|_{L^{q}(\mu)}^2\ \le\ \ddot C\,(1+|r|)^2\,\big\|\widetilde w\,(\eta+|D\eta|)\big\|_{L^2(\mu)}^2 ,
\end{equation}
where the summand $1$ in $(1+|r|)^2$ accounts for the term $\int\rho\widetilde w^2|D\eta|^2$, which
does not carry the factor $r^2$ produced by \eqref{eq:cacc-comp}.
Analogously, from \eqref{eq:cacc-log},
\begin{equation}\label{eq:fundamental-log}
\int\rho|D\widetilde w|^2\eta^2dx\le \ddot C\int(\eta^2+\rho|D\eta|^2)dx\le \ddot C\,\mu(\Omega')\big(\lambda_0^{-1}+K^2\big) .
\end{equation}

\emph{Step 2: the reverse-H\"older chain.} Define, for $s\ne0$ and $R>0$,
\[
\Phi(s,R,z):=\Big(\int_{B_R(x)}|z|^{s}\,d\mu\Big)^{1/s},\qquad
\Phi(+\infty,R,z):=\sup_{B_R(x)}|z|,\qquad \Phi(-\infty,R,z):=\inf_{B_R(x)}|z| .
\]
Since $\lambda_0\,dx\le d\mu$ and $\mu(\Omega')<\infty$, the measure $\mu$ is finite and mutually
absolutely continuous with respect to $dx$; hence the two limits
$\lim_{s\to\pm\infty}\Phi(s,R,z)=\Phi(\pm\infty,R,z)$ hold, and H\"older's inequality reads
\begin{equation}\label{eq:holderPhi}
\Phi(s_1,R,z)\le\mu(B_R(x))^{\frac1{s_1}-\frac1{s_2}}\,\Phi(s_2,R,z),\qquad 0<s_1<s_2 .
\end{equation}

Fix $\delta\le h'<h''\le5\delta$ and let $\eta\in C_c^1(B_{h''}(x))$ with $\eta\equiv1$ on
$B_{h'}(x)$, $0\le\eta\le1$ and $|D\eta|\le\frac{2}{h''-h'}$. Since $h''-h'\le5\delta$, we have
$\eta+|D\eta|\le\frac{C(\delta)}{h''-h'}$. With $\beta<0$, $\beta\ne-1$, $r:=\beta+1<1$ and
$\widetilde w=w_\varsigma^{r/2}$ we have $\widetilde w^{\,2}=w_\varsigma^{r}$, so that
$\|\widetilde w\|_{L^2(B_{h''}(x),\mu)}^2=\Phi(r,h'',w_\varsigma)^{r}$ and
\[
\|\eta\widetilde w\|^2_{L^{q}(\mu)}
\ \ge\ \Big(\int_{B_{h'}(x)}w_\varsigma^{\chi r}\,d\mu\Big)^{2/q}
=\Phi(\chi r,h',w_\varsigma)^{r} ,
\]
because $\frac{q r}{2}=\chi r$ and $\frac2q=\frac1\chi$. Therefore \eqref{eq:fundamental} becomes
\begin{equation}\label{eq:chain}
\Phi(\chi r,h',w_\varsigma)^{r}\ \le\ \Big(\frac{C(1+|r|)}{h''-h'}\Big)^{2}\,\Phi(r,h'',w_\varsigma)^{r},
\end{equation}
with $C=C(p,\Lambda,\delta,q,\lambda_0,\Lambda_0,\|v\|_{L^\infty},\|Du\|_{L^\infty},\|Dv\|_{L^\infty})$.
Raising \eqref{eq:chain} to the power $1/r$ and remembering that the inequality is reversed when
$r<0$, we obtain
\begin{equation}\label{eq:chain-pos}
\Phi(\chi r,h',w_\varsigma)\ \le\ \Big(\frac{C(1+|r|)}{h''-h'}\Big)^{2/r}\Phi(r,h'',w_\varsigma)
\qquad (0<r<1),
\end{equation}
\begin{equation}\label{eq:chain-neg}
\Phi(\chi r,h',w_\varsigma)\ \ge\ \Big(\frac{C(1+|r|)}{h''-h'}\Big)^{2/r}\Phi(r,h'',w_\varsigma)
\qquad (r<0).
\end{equation}

\emph{Step 3: iteration towards the infimum.} Fix $r_0>0$ (to be chosen in Step 4) and set
\[
r_k:=-r_0\chi^{k},\qquad h_k:=\delta\Big(1+\tfrac32\,2^{-k}\Big),\qquad k\ge0 ,
\]
so that $r_k\to-\infty$, $h_0=\frac{5\delta}{2}$, $h_k\downarrow\delta$ and
$h_k-h_{k+1}=\frac{3\delta}{2}2^{-(k+1)}$. Applying \eqref{eq:chain-neg} with $r=r_k$,
$h'=h_{k+1}$, $h''=h_k$ gives
\[
\Phi(r_{k+1},h_{k+1},w_\varsigma)\ \ge\
\Big(\frac{2^{k+2}C\,(1+r_0\chi^{k})}{3\delta}\Big)^{-\frac{2}{r_0\chi^{k}}}\Phi(r_k,h_k,w_\varsigma) ,
\]
where we used $|r_k|=r_0\chi^{k}$ and $h_k-h_{k+1}=\frac{3\delta}{2}2^{-(k+1)}$. Since
$1+r_0\chi^{k}\le(1+r_0^{-1})r_0\chi^{k}$, taking logarithms and summing over $k\ge0$ leads to the
series
\[
\sum_{k\ge0}\frac{2}{r_0\chi^{k}}\Big[\log\tfrac{4C(1+r_0^{-1})r_0}{3\delta}+k\log 2+k\log\chi\Big] ,
\]
which converges because $\chi>1$. Hence there is $C_1=C_1(r_0,\chi,\delta,C)>0$, independent of
$k$, such that $\Phi(r_k,h_k,w_\varsigma)\ge C_1\,\Phi(-r_0,\frac{5\delta}{2},w_\varsigma)$ for every
$k$. On the other hand $h_k\ge\delta$ and $r_k<0$, so that
$\Phi(r_k,h_k,w_\varsigma)\le\Phi(r_k,\delta,w_\varsigma)$, and the latter converges to
$\Phi(-\infty,\delta,w_\varsigma)$ as $k\to\infty$. Therefore
\begin{equation}\label{eq:toinf}
\inf_{B_{\delta}(x)}w_\varsigma=\Phi(-\infty,\delta,w_\varsigma)\ \ge\ C_1\,\Phi(-r_0,\tfrac{5\delta}{2},w_\varsigma).
\end{equation}

\emph{Step 4: Trudinger's exponential estimate.} We claim that there exist $r_0>0$ and $C_2>0$,
both independent of $\varsigma$, such that
\begin{equation}\label{eq:trudinger}
\Phi\big(r_0,\tfrac{5\delta}{2},w_\varsigma\big)\ \le\ C_2\,\Phi\big(-r_0,\tfrac{5\delta}{2},w_\varsigma\big).
\end{equation}
Put
\[
k_\varsigma:=\exp\Big(\frac{1}{|B_{5\delta}(x)|}\int_{B_{5\delta}(x)}\log w_\varsigma\,dy\Big),
\qquad \widetilde w:=\log\frac{w_\varsigma}{k_\varsigma},
\]
so that $\widetilde w$ has zero \emph{Lebesgue} mean on $B_{5\delta}(x)$. We stress that
$w_\varsigma$ itself is \emph{not} renormalized, it is the function $v-u+\varsigma$, which is the
one satisfying \eqref{eq:scp-hyp}, and that only the auxiliary function $\widetilde w$ carries the
constant $k_\varsigma$; this is legitimate because
$D\widetilde w=Dw_\varsigma/w_\varsigma$ irrespective of $k_\varsigma$, and because $k_\varsigma$
will cancel identically in the final display.

Choosing in \eqref{eq:cacc-log} a cut-off $\eta$ with $\eta\equiv1$ on $B_{5\delta}(x)$ and
$\supp\eta\subset B_{6\delta}(x)$, this is the only point where the hypothesis
$\overline{B_{6\delta}(x)}\subset\Omega'$ is needed, we get from \eqref{eq:fundamental-log}
\begin{equation}\label{eq:logenergy}
\int_{B_{5\delta}(x)}\rho\,|D\widetilde w|^2dy\ \le\ C_3 ,
\end{equation}
with $C_3$ independent of $\varsigma$. Since $\overline{\widetilde w}_{B_{5\delta}(x)}=0$, the
Poincar\'e--Sobolev inequality \eqref{eq:sobolev-mean} and \eqref{eq:logenergy} give
\begin{equation}\label{eq:logLq}
\Phi\big(q,5\delta,\widetilde w\big)=\|\widetilde w\|_{L^{q}(B_{5\delta}(x),\mu)}\ \le\ C_4 ,
\end{equation}
again with $C_4$ independent of $\varsigma$. This uniformity is what allows us to let
$\varsigma\to0$ at the end.

Next we derive a Caccioppoli inequality for powers of $\widetilde w$. Let $\beta\ge1$ and let
$\eta\in C_c^1(B_{5\delta}(x))$, $0\le\eta\le1$. Using in \eqref{eq:scp-hyp} the (nonnegative) test
function
\[
\phi:=\eta^2\,\frac{\Psi(\widetilde w)}{w_\varsigma},\qquad \Psi(t):=|t|^{\beta}+(2\beta)^{\beta},
\]
whose gradient is
\[
D\phi=\frac{2\eta\,\Psi(\widetilde w)}{w_\varsigma}D\eta
+\frac{\eta^2}{w_\varsigma^{2}}\Big[\beta\operatorname{sgn}(\widetilde w)|\widetilde w|^{\beta-1}-\Psi(\widetilde w)\Big]Dw_\varsigma ,
\]
and recalling $D\widetilde w=Dw_\varsigma/w_\varsigma$, $(F,Dw_\varsigma)\ge\hat C\rho|Dw_\varsigma|^2$
and $|F|\le\check C\rho|Dw_\varsigma|$, we obtain
\begin{align*}
&\int\frac{\eta^2}{w_\varsigma^{2}}\Big[\Psi(\widetilde w)-\beta\operatorname{sgn}(\widetilde w)|\widetilde w|^{\beta-1}\Big](F,Dw_\varsigma)\,dy\\
&\qquad\le\ 2\check C\int\eta\,\Psi(\widetilde w)\,\rho\,|D\widetilde w||D\eta|\,dy
+|\Lambda|\int\eta^2\Psi(\widetilde w)\,dy ,
\end{align*}
where we used $0\le v-u\le w_\varsigma$ in the last term. For $\beta\ge1$, Young's inequality with
exponents $\frac{\beta}{\beta-1}$ and $\beta$ yields
$2\beta|t|^{\beta-1}\le\frac{\beta-1}{\beta}|t|^{\beta}+\frac1\beta(2\beta)^{\beta}\le\Psi(t)$,
whence
\begin{equation}\label{eq:elem}
\Psi(t)-\beta\operatorname{sgn}(t)|t|^{\beta-1}\ \ge\ \Psi(t)-\beta|t|^{\beta-1}\ \ge\ \beta|t|^{\beta-1}\ \ge\ 0 .
\end{equation}
Consequently
\[
\beta\hat C\int\eta^2\rho\,|\widetilde w|^{\beta-1}|D\widetilde w|^2
\le 2\check C\int\eta\rho\,\big(|\widetilde w|^{\beta}+(2\beta)^{\beta}\big)|D\widetilde w||D\eta|
+|\Lambda|\int\eta^2\Psi(\widetilde w) .
\]
We split the first integral on the right. Young's inequality gives
\[
2\check C\int\eta\rho|\widetilde w|^{\beta}|D\widetilde w||D\eta|
\le\frac{\beta\hat C}{2}\int\eta^2\rho|\widetilde w|^{\beta-1}|D\widetilde w|^2
+C\int\rho\,|\widetilde w|^{\beta+1}|D\eta|^2 ,
\]
while, for the second piece, \eqref{eq:logenergy} gives
\[
2\check C(2\beta)^{\beta}\int\eta\rho|D\widetilde w||D\eta|
\le (2\beta)^{\beta}\Big[\check C\int\eta^2\rho|D\widetilde w|^2+\check C\int\rho|D\eta|^2\Big]
\le C(2\beta)^{\beta}+\check C(2\beta)^\beta\int\rho|D\eta|^2 .
\]
Finally $\int\eta^2\Psi(\widetilde w)\le C\int\eta^2\big(|\widetilde w|^{\beta+1}+(2\beta)^{\beta}\big)$
because $|t|^{\beta}\le|t|^{\beta+1}+1$ and $1\le(2\beta)^{\beta}$, and, since
$\int\eta^2 dy\ge|B_{5\delta/2}(x)|$ for all the cut-offs used below, the additive constant
$C(2\beta)^{\beta}$ can be written as $C(\delta)\int\eta^2(2\beta)^{\beta}dy$. Absorbing and
collecting terms we arrive at
\begin{equation}\label{eq:cacc-log-beta}
\int\rho\,\eta^2|\widetilde w|^{\beta-1}|D\widetilde w|^2dy\ \le\
C\int\big(|\widetilde w|^{\beta+1}+(2\beta)^{\beta}\big)\big(\eta^2+\rho|D\eta|^2\big)dy ,
\end{equation}
which is the analogue of \eqref{eq:cacc-comp} for the logarithm, with the extra additive term
$(2\beta)^{\beta}$.

Now set $r:=\beta+1\ge2$ and $\widehat w:=|\widetilde w|^{r/2}$, so that
$|D\widehat w|^2=\frac{r^2}{4}|\widetilde w|^{\beta-1}|D\widetilde w|^2$. Exactly as in Step 1,
combining \eqref{eq:cacc-log-beta} with Theorem \ref{thm:sobolev} and \eqref{eq:passtomu} we get,
for $\frac{5\delta}{2}\le h'<h''\le5\delta$,
\[
\Phi(\chi r,h',\widetilde w)^{r}\ \le\ \Big(\frac{C(1+|r|)}{h''-h'}\Big)^{2}
\Big[\Phi(r,h'',\widetilde w)^{r}+(2\beta)^{\beta}\mu(B_{5\delta}(x))\Big] .
\]
Note that here $r\ge2$, so that $1+|r|\le\frac32 r$.
Taking the power $1/r\le\frac12$ and using $(a+b)^{1/r}\le a^{1/r}+b^{1/r}$ together with
$\big((2\beta)^{\beta}\mu(B_{5\delta}(x))\big)^{1/r}\le(2\beta)^{\frac{\beta}{\beta+1}}\max\{1,\mu(B_{5\delta}(x))\}
\le 2r\max\{1,\mu(B_{5\delta}(x))\}$, we obtain
\begin{equation}\label{eq:chain-log}
\Phi(\chi r,h',\widetilde w)\ \le\ \Big(\frac{C|r|}{h''-h'}\Big)^{2/r}
\Big[\Phi(r,h'',\widetilde w)+\gamma\,r\Big],\qquad r\ge2 ,
\end{equation}
with $\gamma=\gamma(\mu(B_{5\delta}(x)))>0$.

We iterate \eqref{eq:chain-log} along $\mu_k:=\frac{5\delta}{2}(1+2^{-k})$ (so that
$\mu_0=5\delta$, $\mu_k\downarrow\frac{5\delta}{2}$ and $\mu_{k-1}-\mu_k=\frac{5\delta}{2}2^{-k}$)
and $r=\chi^{k-1}q$. Setting $A_k:=\Phi(\chi^{k}q,\mu_k,\widetilde w)$ and
$\theta_k:=\big(\tfrac{2^{k+1}C\chi^{k-1}q}{5\delta}\big)^{\frac{2}{\chi^{k-1}q}}$, inequality
\eqref{eq:chain-log} reads $A_k\le\theta_k\,(A_{k-1}+\gamma\chi^{k-1}q)$. Since
$\sum_{k\ge1}\frac{2}{\chi^{k-1}q}\log\big(\tfrac{2^{k+1}C\chi^{k-1}q}{5\delta}\big)<\infty$, the
infinite product $\Theta:=\prod_{k\ge1}\theta_k$ converges, and an elementary induction gives
\[
A_k\ \le\ \Theta A_0+\gamma\Theta q\sum_{j=1}^{k}\chi^{\,j-1}\ \le\ C\big(A_0+\chi^{k}q\big) ,
\]
with $C$ independent of $k$. Given $m\ge q$, let $k_m:=\min\{h\in\N:\ \chi^{h}q\ge m\}$, so that
$\chi^{k_m}q\le\chi m$; by \eqref{eq:holderPhi},
\begin{equation}\label{eq:momentbound}
\Phi\big(m,\tfrac{5\delta}{2},\widetilde w\big)\ \le\
C\,\Phi\big(\chi^{k_m}q,h_{k_m},\widetilde w\big)\ \le\ C\big(\Phi(q,5\delta,\widetilde w)+m\big)
\ \le\ C_5\,(1+m),
\end{equation}
where in the last step we used \eqref{eq:logLq}; the constant $C_5$ does not depend on $\varsigma$
nor on $m$.

We can now conclude Step 4. Note first that \eqref{eq:momentbound} also holds, after enlarging
$C_5$, for the finitely many integers $1\le k<q$: indeed \eqref{eq:holderPhi} gives
$\Phi(k,\frac{5\delta}{2},\widetilde w)\le\mu(B_{5\delta/2}(x))^{\frac1k-\frac1q}\Phi(q,\frac{5\delta}{2},\widetilde w)$,
which is bounded by \eqref{eq:logLq}. Expanding the exponential and using \eqref{eq:momentbound}
for every $k\ge1$ (the term $k=0$ equals $\mu(B_{5\delta/2}(x))$ and is absorbed into the
constant $C$ below),
\[
\int_{B_{5\delta/2}(x)}e^{r_0|\widetilde w|}d\mu
\le\sum_{k\ge0}\frac{r_0^{k}}{k!}\int_{B_{5\delta/2}(x)}|\widetilde w|^{k}d\mu
=\sum_{k\ge0}\frac{r_0^{k}\,\Phi\big(k,\frac{5\delta}{2},\widetilde w\big)^{k}}{k!}
\le C\sum_{k\ge0}\frac{\big(C_5 r_0\big)^{k}(1+k)^{k}}{k!} .
\]
Since $\frac{(1+k)^{k}}{k!}\le e^{k+1}$ by Stirling's bound $k!\ge k^{k}e^{-k}$, the last series
converges as soon as $r_0<\frac{1}{eC_5}$; we fix such an $r_0$, which depends only on the data.
Hence $\int_{B_{5\delta/2}(x)}e^{r_0|\widetilde w|}d\mu\le C$ and therefore
\[
\int_{B_{5\delta/2}(x)}w_\varsigma^{\,r_0}d\mu\cdot\int_{B_{5\delta/2}(x)}w_\varsigma^{-r_0}d\mu
=k_\varsigma^{\,r_0}\!\int e^{r_0\widetilde w}d\mu\cdot k_\varsigma^{-r_0}\!\int e^{-r_0\widetilde w}d\mu
\le\Big(\int e^{r_0|\widetilde w|}d\mu\Big)^{2}\le C^{2} ,
\]
the constant $k_\varsigma$ cancelling exactly, as announced. Raising to the power $\frac1{r_0}$
gives \eqref{eq:trudinger}, with a constant $C_2=C^{2/r_0}$ independent of $\varsigma$.

\emph{Step 5: conclusion.} Let $0<s<\chi$. If $s\le r_0$, then by \eqref{eq:holderPhi}
\[
\Phi(s,2\delta,w_\varsigma)\le\Phi\big(s,\tfrac{5\delta}{2},w_\varsigma\big)
\le\mu\big(B_{5\delta/2}(x)\big)^{\frac1s-\frac1{r_0}}\Phi\big(r_0,\tfrac{5\delta}{2},w_\varsigma\big).
\]
If instead $r_0<s<\chi$, choose $k_0\in\N$ so large that $r_1:=s\chi^{-(k_0+1)}\le r_0$ and set
$\hat r_k:=r_1\chi^{k}$ and $\frac{5\delta}{2}=\varrho_0>\varrho_1>\dots>\varrho_{k_0+1}=2\delta$. Since
$\hat r_{k}\le r_1\chi^{k_0}=s/\chi<1$ for $k\le k_0$, inequality \eqref{eq:chain-pos} may be applied at
each of the $k_0+1$ steps, and it yields
$\Phi(s,2\delta,w_\varsigma)\le C\,\Phi\big(r_1,\frac{5\delta}{2},w_\varsigma\big)$; here $C$ is the
product of the $k_0+1$ factors appearing in \eqref{eq:chain-pos}, hence a finite constant depending
on $s$, on $\delta$ and on the data, but not on $\varsigma$ (this is the only place where the
constant of Theorem \ref{thm:abstract} depends on $s$). By
\eqref{eq:holderPhi} and \eqref{eq:trudinger},
\[
\Phi\big(r_1,\tfrac{5\delta}{2},w_\varsigma\big)\le C\,\Phi\big(r_0,\tfrac{5\delta}{2},w_\varsigma\big)
\le C\,\Phi\big(-r_0,\tfrac{5\delta}{2},w_\varsigma\big) .
\]
In both cases, combining with \eqref{eq:toinf} we obtain
\[
\Phi(s,2\delta,w_\varsigma)\ \le\ C\,\Phi\big(-r_0,\tfrac{5\delta}{2},w_\varsigma\big)
\ \le\ C\,\inf_{B_{\delta}(x)}w_\varsigma ,
\]
with $C$ independent of $\varsigma$. Letting $\varsigma\downarrow0$ and using monotone convergence
on the left and $\inf_{B_{\delta}(x)}w_\varsigma=\varsigma+\inf_{B_{\delta}(x)}w$ on the right, we
conclude that
\[
\Big(\int_{B_{2\delta}(x)}(v-u)^{s}\,d\mu\Big)^{1/s}\ \le\ C\,\inf_{B_{\delta}(x)}(v-u)
\qquad\text{for every }0<s<\chi .
\]

\emph{Step 6: the strong comparison principle.} Let $\Omega_0\subseteq\Omega'$ be a connected
subdomain and set $U:=\{y\in\Omega_0:\ u(y)=v(y)\}$. Since $u,v$ are continuous, $U$ is closed in
$\Omega_0$. If $y_0\in U$, choose $\delta>0$ with $\overline{B_{6\delta}(y_0)}\subset\Omega_0$; then
$\inf_{B_{\delta}(y_0)}(v-u)=0$, hence $\int_{B_{2\delta}(y_0)}(v-u)^s\,d\mu=0$ and, since $\mu$ and $dx$
are mutually absolutely continuous, $u\equiv v$ on $B_{2\delta}(y_0)$. Thus $U$ is open, and the
connectedness of $\Omega_0$ gives the alternative.
\end{proof}

\begin{remark}
Since $\lambda_0\,dx\le d\mu$, the inequality of Theorem \ref{thm:abstract} implies the usual one
with respect to the Lebesgue measure,
$\|v-u\|_{L^s(B_{2\delta}(x))}\le\lambda_0^{-1/s}C\inf_{B_{\delta}(x)}(v-u)$.
\end{remark}

\subsection{The linearized operator}\label{subsec:linearized}
The proof of Theorem \ref{thm:main1} is identical, once the Caccioppoli inequality
\eqref{eq:cacc-comp} is replaced by the following one, which is even simpler because the operator
is linear.

\begin{lemma}\label{lem:cacc-lin}
Let $u\in C^1(\Omega)$ be a weak solution of \eqref{eq:main}, set $\rho=|Du|^{p-2}$ and let
$v\in H^{1,2}_{\rho}(\Omega')\cap L^\infty(\Omega')$, $v\ge0$, be a weak supersolution of
$L_u(v,\cdot)=0$ in $B_{6\delta}(x)\subset\Omega'$. Then, with $v_\varsigma:=v+\varsigma$
$(\varsigma>0)$, $\eta\in C_c^1(B_{6\delta}(x))$, $\eta\ge0$, $\beta<0$, $r=\beta+1$ and
$\widetilde v=v_\varsigma^{r/2}$ if $\beta\ne-1$, $\widetilde v=\log v_\varsigma$ if $\beta=-1$,
the estimates \eqref{eq:cacc-comp} and \eqref{eq:cacc-log} hold with $\widetilde w$ replaced by
$\widetilde v$.
\end{lemma}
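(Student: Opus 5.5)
The plan is to repeat the computation of Lemma \ref{lem:cacc-comp}, now in the linear setting. Let $a(x)\xi\cdot\zeta:=|Du|^{p-2}(\xi,\zeta)+(p-2)|Du|^{p-4}(Du,\xi)(Du,\zeta)$ denote the coefficient matrix of $L_u$. By \eqref{eq:ellipticity} it satisfies
\[
a\xi\cdot\xi\ge\min\{1,p-1\}\rho|\xi|^2,\qquad |a\xi|\le\max\{1,p-1\}\rho|\xi| ,
\]
where the second bound follows from symmetry and Cauchy--Schwarz for the quadratic form. These two inequalities replace \eqref{eq:vector} and \eqref{eq:vector2}. The zero-order term $-\int f'(u)v\varphi$ plays the role of the $\Lambda$-term: on $\Omega'$ we have $|f'(u)|\le c_1$, as in Section \ref{sec:morrey}.

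The supersolution inequality reads $\int a\,Dv\cdot D\varphi\ge\int f'(u)v\varphi$ for nonnegative admissible $\varphi$. I will test it with $\varphi=\eta^2v_\varsigma^{\beta}$, $\beta<0$. Admissibility is the first point to check. Since $v\ge0$ is bounded, $v_\varsigma\in[\varsigma,\|v\|_\infty+\varsigma]$, so $t\mapsto(t+\varsigma)^\beta$ is smooth and bounded on the range of $v$. Also $v\in H^{1,2}_\rho\subset H^{1,2}$ because $\rho\ge\lambda_0$. Hence $\varphi\in W^{1,2}$ with compact support in $B_{6\delta}(x)$, and $\int\rho|Dv||D\varphi|<\infty$ by Cauchy--Schwarz in $L^2(\mu)$ together with $\rho\in L^1$; the latter is guaranteed in our range by Theorem \ref{thm:morrey} with $\sigma=2-p$.

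Writing $D\varphi=2\eta v_\varsigma^\beta D\eta+\beta\eta^2v_\varsigma^{\beta-1}Dv$ and moving the $\beta$-term to the left (it has a good sign since $\beta<0$), we get
\[
|\beta|\min\{1,p-1\}\int\eta^2v_\varsigma^{\beta-1}\rho|Dv|^2\le 2\max\{1,p-1\}\int\eta v_\varsigma^\beta\rho|Dv||D\eta|+c_1\int\eta^2 v_\varsigma^{\beta+1},
\]
where I used $|f'(u)v\varphi|\le c_1\eta^2 v_\varsigma^{\beta+1}$ since $0\le v\le v_\varsigma$. Young's inequality on the middle term, split as $(\eta v_\varsigma^{(\beta-1)/2}\rho^{1/2}|Dv|)(v_\varsigma^{(\beta+1)/2}\rho^{1/2}|D\eta|)$, absorbs half of the left-hand side. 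This yields
\[
\int\eta^2 v_\varsigma^{\beta-1}\rho|Dv|^2\le\frac{C}{|\beta|}\Big(1+\frac1{|\beta|}\Big)\int v_\varsigma^{\beta+1}(\eta^2+\rho|D\eta|^2),
\]
with $C=C(p,c_1)$.

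Translating back is then exactly as before. For $\beta\ne-1$ we have $|D\widetilde v|^2=\frac{r^2}{4}v_\varsigma^{\beta-1}|Dv|^2$ and $\widetilde v^2=v_\varsigma^{r}$, which gives \eqref{eq:cacc-comp}. For $\beta=-1$ we have $v_\varsigma^{\beta+1}\equiv1$ and $|D\log v_\varsigma|^2=v_\varsigma^{-2}|Dv|^2$, which gives \eqref{eq:cacc-log}. The constant $\dot C$ is independent of $\varsigma$, and of $\beta$ as long as $|\beta|\ge c>0$.

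The only step needing care is admissibility and the justification of the integrations, i.e.\ that $L_u(v,\varphi)$ makes sense for this $\varphi$ given only $v\in H^{1,2}_\rho$. This is settled above by the bound $|a\xi|\le C\rho|\xi|$ and the integrability of $\rho$. If $v$ is a supersolution only for test functions in $H^{1,2}_{0,\rho}$, I would additionally note that $\varphi\in H^{1,2}_{0,\rho}(B_{6\delta})$, since $\rho|D\varphi|^2\le C\rho(|Dv|^2+|D\eta|^2)$ is integrable.
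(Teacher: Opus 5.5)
Your proof is correct and follows the paper's argument. You use the same test function $\eta^2v_\varsigma^{\beta}$, the same lower and upper bounds on the coefficient matrix of $L_u$, the same control of the zero-order term through $|f'(u)|\le c_1$ and $0\le v\le v_\varsigma$, and the same Young absorption as in Lemma \ref{lem:cacc-comp}. Your upper constant $\max\{1,p-1\}$ is sharper than the paper's $1+|p-2|$, which makes no difference. The admissibility check you add is not in the paper and is sound, except for the closing aside: compact support plus finite weighted norm does not by itself place $\varphi$ in $H^{1,2}_{0,\rho}$ without a density argument. Nothing in your main line depends on that aside.
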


\begin{proof}
Write $L_u(v,\varphi)\ge0$ for $\varphi\ge0$ as
$\int(A(x)Dv,D\varphi)\ge\int f'(u)v\varphi$, where
$A(x)=|Du|^{p-2}\big(\mathrm{Id}+(p-2)\frac{Du\otimes Du}{|Du|^{2}}\big)$ satisfies, by
\eqref{eq:ellipticity},
\[
(A\xi,\xi)\ \ge\ \min\{1,p-1\}\,\rho\,|\xi|^2,\qquad |A\xi|\ \le\ (1+|p-2|)\,\rho\,|\xi| .
\]
Take $\varphi=\eta^2v_\varsigma^{\beta}$ with $\beta<0$ and use $Dv_\varsigma=Dv$,
$0\le v\le v_\varsigma$ and $|f'(u)|\le c_1$ on $\supp\eta$ to get
\[
\min\{1,p-1\}\,|\beta|\int\eta^2v_\varsigma^{\beta-1}\rho|Dv_\varsigma|^2
\le 2(1+|p-2|)\int\eta v_\varsigma^{\beta}\rho|Dv_\varsigma||D\eta|
+c_1\int\eta^2v_\varsigma^{\beta+1} .
\]
Young's inequality now gives the claim exactly as in the proof of Lemma \ref{lem:cacc-comp}.
\end{proof}

With Lemma \ref{lem:cacc-lin} in hand, Steps 1--5 of the proof of Theorem \ref{thm:abstract} apply
word for word, the only structural inputs being $(\mathcal M_\kappa)$ through Theorem
\ref{thm:sobolev}, and the two displayed bounds on $A$, and yield
\eqref{eq:weakharnack1}. Analogously, testing with $\eta^2v_\varsigma^{\beta}$, $\beta>0$, in the
reversed inequality produces the same Caccioppoli estimate \eqref{eq:cacc-comp}, now for
$r=\beta+1>1$; since the passage from \eqref{eq:fundamental} to \eqref{eq:chain-pos} only uses
$r>0$, the chain \eqref{eq:chain-pos} is available for these values of $r$ as well, and iterating it
along $r_k=s\chi^{k}\to+\infty$, $h_k=\delta(1+2^{-k})$ gives the local boundedness estimate
\[
\sup_{B_{\delta}(x)}v\ \le\ C\,\|v\|_{L^{s}(B_{2\delta}(x),\mu)}\qquad (s>0)
\]
for nonnegative bounded subsolutions; no logarithmic step is needed here. Combining the two, one
obtains the full Harnack inequality $\sup_{B_{\delta}(x)}v\le C\inf_{B_{2\delta}(x)}v$ for solutions of
$L_u(v,\cdot)=0$, and likewise the Harnack comparison inequality
$\sup_{B_{\delta}(x)}(v-u)\le C\inf_{B_{2\delta}(x)}(v-u)$ of \cite[Corollary 3.2]{DS2}, in the range
$\frac32<p<2$.

%%%%%%%%%%%%%%%%%%%%%%%%%%%%%%%%%%%%%%%%%%%%%%%%%%%%%%%%%%%%%%%%%%%%%%%%%%%%%%%
\section{Proof of the main results}\label{sec:proofs}
%%%%%%%%%%%%%%%%%%%%%%%%%%%%%%%%%%%%%%%%%%%%%%%%%%%%%%%%%%%%%%%%%%%%%%%%%%%%%%%

\begin{proposition}\label{prop:verify}
Let $\frac32<p<2$, let $f$ satisfy $(F)$ and let $u\in C^1(\Omega)$ be a weak solution of
\eqref{eq:main}. Let $\Omega''\Subset\Omega'\Subset\Omega$. Then the weight $\rho=|Du|^{p-2}$
satisfies condition $(\mathcal M_\kappa)$ in $\Omega''$ with
\[
\lambda_0=\|Du\|_{L^\infty(\Omega')}^{\,p-2},\qquad \kappa=\min\{1,\ \tau(3p-4)\}>0 ,
\]
$\tau$ being the H\"older exponent of $Du$ on $\overline{\Omega'}$. The same holds for
$\rho=(|Du|+|Dv|)^{p-2}$ for any $v\in C^1(\Omega)$.
\end{proposition}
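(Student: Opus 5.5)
Our plan is to apply Theorem \ref{thm:morrey} with $\sigma:=2-p$ and then pass from the local estimate (small radii, centres in $\Omega''$) to the global condition in Definition \ref{def:Mkappa} (all centres in $\R^N$, all radii, measure $\mu=\rho\chi_{\Omega''}dx$). Since $\frac32<p<2$ we have $0<\sigma=2-p<p-1$, so Theorem \ref{thm:morrey} applies. It gives $\beta=\max\{0,\sigma+2-p\}=4-2p\in(0,1)$, hence $p-\beta=3p-4>0$ and $\kappa=\min\{1,\tau(3p-4)\}$, exactly as claimed. The lower bound is \eqref{eq:lowerweight}: $\rho=|Du|^{p-2}\ge\|Du\|_{L^\infty(\Omega')}^{p-2}=\lambda_0$ a.e. in $\Omega''\subset\Omega'$.

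For the Morrey bound, we first need $\mu(\Omega'')<\infty$. This follows either from Corollary \ref{cor:summ} or by covering $\overline{\Omega''}$ with finitely many balls $B_{\bar r/2}(x_0)$, $x_0\in\Omega''$. Now fix $x\in\R^N$ and $r>0$, and split into three cases.

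\emph{(a) $r<\bar r/2$ and $B_r(x)\cap\Omega''\neq\emptyset$.} Pick $x_0\in B_r(x)\cap\Omega''$. Then $B_r(x)\subset B_{2r}(x_0)$ with $2r<\bar r$, and \eqref{eq:morrey} gives $\mu(B_r(x))\le C(2r)^{N-2+\kappa}$.

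\emph{(b) $r<\bar r/2$ and $B_r(x)\cap\Omega''=\emptyset$.} Then $\mu(B_r(x))=0$.

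\emph{(c) $r\ge\bar r/2$.} Use $\mu(B_r(x))\le\mu(\Omega'')\le\mu(\Omega'')(2/\bar r)^{N-2+\kappa}r^{N-2+\kappa}$.

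Taking $\Lambda_0$ to be the maximum of the constants from (a) and (c) yields $(\mathcal M_\kappa)$ in $\Omega''$. The only delicate point is the recentring in case (a). It works because the Morrey estimate is uniform in centres of $\Omega''$, and the doubled radius stays below $\bar r$.

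For $\rho=(|Du|+|Dv|)^{p-2}$ with $v\in C^1(\Omega)$, the pointwise bound $(|Du|+|Dv|)^{p-2}\le|Du|^{p-2}$ holds since $p-2<0$. So the measure of every ball is dominated by the previous one, and the Morrey bound carries over with the same $\kappa$ and $\Lambda_0$. The lower bound becomes $\rho\ge(\|Du\|_{L^\infty(\Omega')}+\|Dv\|_{L^\infty(\Omega')})^{p-2}>0$, which is finite because $v\in C^1$ on the compact set $\overline{\Omega'}$. For this weight the constant $\lambda_0$ must therefore be adjusted to this value; the stated formula for $\lambda_0$ is only for $\rho=|Du|^{p-2}$.
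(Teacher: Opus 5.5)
Your proposal is correct and follows essentially the same route as the paper's proof. You apply Theorem \ref{thm:morrey} with $\sigma=2-p$ (so $\beta=4-2p$ and $p-\beta=3p-4$), recentre at a point of $B_r(x)\cap\Omega''$ when $r<\bar r/2$, use $\mu(\Omega'')<\infty$ when $r\ge\bar r/2$, and treat $(|Du|+|Dv|)^{p-2}$ by pointwise domination with $\lambda_0$ adjusted accordingly.
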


\begin{proof}
Set $\sigma:=2-p$. Since $p>\frac32$ we have $\sigma=2-p<p-1$, so Theorem \ref{thm:morrey} applies
and yields $\bar r>0$, $C>0$ and $\kappa=\min\{1,\tau(p-\beta)\}$ with
$\beta=\max\{0,\sigma+2-p\}=4-2p\in(0,1)$, hence $p-\beta=3p-4>0$, such that
\begin{equation}\label{eq:mu-small}
\mu(B_r(x_0))=\int_{B_r(x_0)}|Du|^{p-2}dy\le C\,r^{N-2+\kappa}\qquad\forall x_0\in\Omega'',\ 0<r<\bar r .
\end{equation}
It remains to upgrade \eqref{eq:mu-small} to all centres $x\in\R^N$ and all radii $r>0$, for the
measure $\mu=|Du|^{p-2}\chi_{\Omega''}dx$. If $B_r(x)\cap\Omega''=\emptyset$ there is nothing to
prove. If $B_r(x)\cap\Omega''\ne\emptyset$ and $r<\bar r/2$, pick $x_0\in B_r(x)\cap\Omega''$; then
$B_r(x)\subset B_{2r}(x_0)$ and \eqref{eq:mu-small} gives
$\mu(B_r(x))\le C(2r)^{N-2+\kappa}$. If $r\ge\bar r/2$ then, taking $r=\bar r/2$ in the previous case
and covering $\Omega''$ by finitely many such balls, $\mu(\Omega'')<\infty$ and
\[
\mu(B_r(x))\le\mu(\Omega'')\le\mu(\Omega'')\Big(\frac{2}{\bar r}\Big)^{N-2+\kappa}r^{N-2+\kappa} .
\]
Finally $\rho\ge\lambda_0$ by \eqref{eq:lowerweight}, and
$(|Du|+|Dv|)^{p-2}\le|Du|^{p-2}$ pointwise, so the last assertion follows as well (with
$\lambda_0$ replaced by $(\|Du\|_\infty+\|Dv\|_\infty)^{p-2}$).
\end{proof}

\begin{proof}[Proof of Theorems \ref{thm:main1}, \ref{thm:main2} and \ref{thm:main4}]
Let $\Omega''\Subset\Omega'\Subset\Omega$. By Proposition \ref{prop:verify}, the weight
$\rho=|Du|^{p-2}$ (resp. $(|Du|+|Dv|)^{p-2}$) satisfies $(\mathcal M_\kappa)$ in $\Omega''$.
Theorem \ref{thm:main4} is then exactly Theorem \ref{thm:abstract} applied in $\Omega''$: given a
connected subdomain $\Omega_0\subseteq\Omega$, the set $\{u=v\}\cap\Omega_0$ is closed in $\Omega_0$
by continuity, and it is open because every $y_0\in\Omega_0$ has a ball $\overline{B_{6\delta}(y_0)}$
contained in some $\Omega''\Subset\Omega'\Subset\Omega_0$, to which Theorem \ref{thm:abstract}
applies. The statement with the hypothesis
$-\Lap{p}(u)-f(u)\le-\Lap{p}(v)-f(v)$ reduces to \eqref{eq:scp-hyp} exactly as in
\cite[Remark 3.2]{DS2}: since $f$ is locally Lipschitz on $(0,\infty)$, for every compact
$[a,b]\subset(0,\infty)$ there is $\Lambda>0$ such that $s\mapsto f(s)+\Lambda s$ is nondecreasing on
$[a,b]$.

Theorem \ref{thm:main1} is the weak Harnack inequality obtained by running Steps 1--5 of the proof
of Theorem \ref{thm:abstract} on the linearized equation, as detailed in \S\ref{subsec:linearized};
note that $\rho=|Du|^{p-2}$ satisfies $(\mathcal M_\kappa)$ by Proposition \ref{prop:verify}. In
both Theorem \ref{thm:main1} and Theorem \ref{thm:main4} one may take $\chi=q/2$, with $q$ given by
\eqref{eq:qdef} and $\kappa=\min\{1,\tau(3p-4)\}$ as in Proposition \ref{prop:verify}.

Theorem \ref{thm:main2} follows from Theorem \ref{thm:main1} in the classical way: if $\Omega_0$ is
a connected subdomain of $\Omega$, $v\ge0$ is a continuous supersolution in $\Omega_0$ and
$K:=\{v=0\}\cap\Omega_0$, then $K$ is closed in $\Omega_0$ by continuity and open by
\eqref{eq:weakharnack1} (applied on balls $B_{6\delta}(x)$ contained in $\Omega_0$); hence
$K=\emptyset$ or $K=\Omega_0$. The final assertion follows since
$u_{x_i}$ solves the linearized equation, by \eqref{eq:linear-eq}.
\end{proof}

\begin{proof}[Proof of Theorem \ref{thm:main3}]
By \cite[Theorem 2.3 and Corollary 2.1]{DS2} (proved in \cite{DS1}), which hold for every
$1<p<\infty$ and rely only on the
weak comparison principle and the moving plane method, for every direction $\nu$ and every
$\lambda\in(a(\nu),\lambda_2(\nu))$ one has $u(x)\le u(x_\lambda^\nu)$ in $\Omega^\nu_\lambda$, and
$\frac{\partial u}{\partial\nu}\ge0$ in $\Omega^{\nu}_{\lambda_2(\nu)}$. By Theorem \ref{thm:main2}, in each connected component $\mathcal C$ of $\Omega^{\nu}_{\lambda_2(\nu)}$ either
$\frac{\partial u}{\partial\nu}\equiv0$ or $\frac{\partial u}{\partial\nu}>0$. The first alternative
is impossible: $\mathcal C\cap\partial\Omega\ne\emptyset$, $u=0$ on $\partial\Omega$ and $u>0$ in
$\Omega$, so $u$ cannot be constant along $\nu$ in $\mathcal C$. Hence
$\frac{\partial u}{\partial\nu}>0$ in $\Omega^{\nu}_{\lambda_2(\nu)}$, i.e.
\[
Z_u\cap\Omega^{\nu}_{\lambda_2(\nu)}=\emptyset .
\]
When $\Omega$ is convex and symmetric in the directions $e_1,\dots,e_N$ one has
$\lambda_2(\pm e_i)=0$ for every $i$, so that $Z_u$ is contained in
$\bigcap_{i=1}^N\{x_i=0\}=\{0\}$; on the other hand $Z_u\ne\emptyset$ because $u$ attains its
maximum in $\Omega$. Hence $Z_u=\{0\}$. Since the $p$-Laplace operator is uniformly elliptic with
H\"older continuous coefficients on any compact subset of $\Omega\setminus Z_u$, elliptic
regularity gives $u\in C^2(\Omega\setminus\{0\})$; see \cite[Theorem 1.3]{DS2}.
\end{proof}

\begin{proof}[Proof of Corollary \ref{cor:ball}]
Let $\Omega=B_R(0)$ and let $u$ be a solution of
\eqref{eq:dirichlet}. By \cite[Corollary 2.1]{DS2} (valid for all $1<p<\infty$), $u$ is radially
symmetric and radially decreasing. Writing $u=u(s)$, $s=|x|$, integrating the equation on $B_s(0)$
gives
\[
s^{N-1}|u'(s)|^{p-2}(-u'(s))=\int_0^{s}f(u(t))t^{N-1}dt ,
\]
so that, with $c_*:=\min_{\overline{B_R}}f(u)>0$ and $c^*:=\max_{\overline{B_R}}f(u)$ (cf.
\cite{SerrinTang} for the asymptotics of radial solutions near the origin),
\begin{equation}\label{eq:radial}
\Big(\frac{c_*}{N}\Big)^{\frac1{p-1}}s^{\frac1{p-1}}\ \le\ |Du(x)|\ \le\
\Big(\frac{c^*}{N}\Big)^{\frac1{p-1}}s^{\frac1{p-1}},\qquad s=|x| .
\end{equation}
Let $\Omega''\Subset B_R(0)$, $\sigma=2-p$ and put $\theta:=\frac{2-p}{p-1}$, which satisfies
$\theta<2$ precisely because $p>\frac43$. Let $x_0\in\Omega''$, $d:=|x_0|$ and $r>0$. If
$d\ge2r$ then $|Du|\ge cd^{\frac1{p-1}}\ge c(2r)^{\frac{1}{p-1}}$ on $B_r(x_0)$ by \eqref{eq:radial},
whence
\[
\int_{B_r(x_0)}|Du|^{-\sigma}\le C\,r^{-\theta}r^{N}=C\,r^{N-\theta} .
\]
If $d<2r$ then $B_r(x_0)\subset B_{3r}(0)$ and, by \eqref{eq:radial},
\[
\int_{B_r(x_0)}|Du|^{-\sigma}\le C\int_0^{3r}s^{-\theta}s^{N-1}ds=C\,r^{N-\theta},
\]
the integral being convergent since $\theta<2\le N$. In both cases
$\mu(B_r(x_0))\le Cr^{N-2+(2-\theta)}$ with $2-\theta>0$, and one extends the estimate to all
centres and radii as in the proof of Proposition \ref{prop:verify}. Thus $(\mathcal M_\kappa)$ holds
with $\kappa=2-\theta$, and Theorems \ref{thm:abstract} and \ref{thm:sobolev} apply.
\end{proof}

\begin{remark}\label{rem:minkowski}
The proof of Corollary \ref{cor:ball} uses only the lower bound in \eqref{eq:radial}. More
generally, suppose that $Z_u\cap\Omega''$ has upper Minkowski dimension $d\in[0,N-1]$, in the
quantitative sense that
\[
\big|\{y\in B_r(x_0):\ \dist(y,Z_u)<t\}\big|\ \le\ C\,r^{\,d}\,t^{\,N-d}
\qquad (0<t\le r),
\]
and that $|Du(y)|\ge c\,\dist(y,Z_u)^{\frac1{p-1}}$. The exponent $\frac1{p-1}$ is the maximal
degeneracy rate compatible with \eqref{eq:main}: integrating the equation over $B_s(x_0)$ with
$x_0\in Z_u$ and using the divergence theorem gives
$c_0\,\omega_Ns^{N}\le\int_{B_s(x_0)}f(u)\le N\omega_N s^{N-1}\max_{\partial B_s(x_0)}|Du|^{p-1}$,
whence $\max_{\partial B_s(x_0)}|Du|\ge(c_0s/N)^{\frac1{p-1}}$ for every small $s>0$. Writing $\theta=\frac{2-p}{p-1}$, one then
gets $\mu(B_r(x_0))\lesssim r^{\,N-\theta}$ provided
\[
\theta<\min\{2,\ N-d\} ,
\]
the constraint $\theta<N-d$ being the one guaranteeing $\rho\in L^1_{\rm loc}$ and the constraint
$\theta<2$ the Morrey exponent. Hence $(\mathcal M_\kappa)$ holds for $p>\frac43$ whenever
$d\le N-2$, and for $p>\frac32$ when $d=N-1$. The case $d=N-1$ is realized by the solutions of
Proposition \ref{prop:sharp} and by those of Proposition \ref{prop:annulus}, and the case $d=0$ by
the radial solutions of Corollary
\ref{cor:ball}. This is the precise sense in which the threshold $\frac32$ of Theorems
\ref{thm:main1}--\ref{thm:main3} is dictated by the possible presence of a hypersurface of critical
points.
\end{remark}

%%%%%%%%%%%%%%%%%%%%%%%%%%%%%%%%%%%%%%%%%%%%%%%%%%%%%%%%%%%%%%%%%%%%%%%%%%%%%%%
\section{Applications}\label{sec:applications}
%%%%%%%%%%%%%%%%%%%%%%%%%%%%%%%%%%%%%%%%%%%%%%%%%%%%%%%%%%%%%%%%%%%%%%%%%%%%%%%

The strong comparison principle of \cite{DS2}, together with the strong maximum principle for
the linearized operator, which is its companion, is one of the standard tools in the qualitative
theory of $p$-Laplace equations, and they are used as black boxes in a number of papers, each of
which inherits the hypothesis $\frac{2N+2}{N+2}<p<2$. For the papers discussed below we have
checked that this hypothesis is invoked \emph{only} through \cite[Theorems 1.2 and 1.4]{DS2}, so
that replacing the latter by our Theorems \ref{thm:main2} and \ref{thm:main4} widens the admissible
range at once.

\subsection{A more flexible hypothesis}
In the applications one rarely has $f>0$ on $(0,\infty)$ together with $u>0$; what one really has
is that the \emph{composition} $f(u)$ has a constant sign on the subdomain under consideration.
Accordingly we introduce:

\medskip
\noindent$(\mathcal H)$\quad $u\in C^1(\Omega)$ is a weak solution of $-\Delta_pu=f(u)$ in
$\Omega$, where $f$ is locally Lipschitz continuous on an open interval containing
$u(\Omega)$, and $f(u(x))>0$ for every $x\in\Omega$, or $f(u(x))<0$ for every $x\in\Omega$.
\medskip

\begin{proposition}\label{prop:H}
Theorems \ref{thm:main1}, \ref{thm:main2}, \ref{thm:main4}, \ref{thm:morrey}, Proposition
\ref{prop:verify} and Corollary \ref{cor:summ} remain valid, with the same proofs, if the hypothesis
``$f$ satisfies $(F)$ and $u>0$'' is replaced by $(\mathcal H)$.
\end{proposition}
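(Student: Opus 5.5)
The plan is to audit every place where ``$f$ satisfies $(F)$ and $u>0$'' enters the chain Theorem \ref{thm:morrey} $\to$ Corollary \ref{cor:summ} $\to$ Proposition \ref{prop:verify} $\to$ Theorems \ref{thm:abstract}, \ref{thm:main1}, \ref{thm:main2}, \ref{thm:main4}. In each case I will check that only two quantitative facts on a fixed $\Omega'\Subset\Omega$ are used. The first is $c_0:=\min_{\overline{\Omega'}}|f(u)|>0$. The second is $c_1:=\sup_{\overline{\Omega'}}|f'(u)|<\infty$, meaning a Lipschitz bound for $f$ on the compact interval $u(\overline{\Omega'})$. Both follow from $(\mathcal H)$. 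The set $u(\overline{\Omega'})$ is a compact subset of the open interval on which $f$ is locally Lipschitz, which gives $c_1$. The function $x\mapsto f(u(x))$ is continuous and of one sign on $\overline{\Omega'}\subset\Omega$, so it has a positive minimum of $|f(u)|$ there, which gives $c_0$. Positivity of $u$ itself is never used except to produce these two constants.

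Next I will go through the inputs one by one. Theorems \ref{thm:C1alpha} and \ref{thm:DSlin} are local regularity results that only require $f$ locally Lipschitz near the range of $u$. Lemma \ref{lem:cacc2} uses only $c_1$. Lemma \ref{lem:cacc1} is the one place where the sign matters. In the case $f(u)<0$ on $\Omega$, I will test with the same $\varphi=\psi^2g_\eps$. The identity then gives $\int(-f(u))\psi^2g_\eps$ equal to minus the same right-hand side, and taking absolute values yields \eqref{eq:step1} with $c_0$ replaced by $\min|f(u)|$. Equivalently, $-u$ solves $-\Delta_p(-u)=\tilde f(-u)$ with $\tilde f(t)=-f(-t)$ positive on the range, and $|D(-u)|=|Du|$. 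The rest of the proof of Theorem \ref{thm:morrey} uses only the Hölder estimate and these two lemmas, so Theorem \ref{thm:morrey}, Corollary \ref{cor:summ} and Proposition \ref{prop:verify} carry over verbatim, including the lower bound $\rho\ge\lambda_0$.

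For the Harnack-type results, Section \ref{sec:sobolev} uses only $(\mathcal M_\kappa)$ and the structural inequalities. Lemma \ref{lem:cacc-lin} uses only $|f'(u)|\le c_1$ on $\supp\eta$, and Lemma \ref{lem:cacc-comp} does not involve $f$ at all. So Theorems \ref{thm:main1} and \ref{thm:main2} follow once Proposition \ref{prop:verify} holds. For Theorem \ref{thm:main4} I also need the reduction of the variant hypothesis $-\Delta_pu-f(u)\le-\Delta_pv-f(v)$ to \eqref{eq:scp-hyp}. This requires, on each $\Omega''$, a $\Lambda$ with $s\mapsto f(s)+\Lambda s$ nondecreasing on a compact interval containing $u(\overline{\Omega''})\cup v(\overline{\Omega''})$. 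That holds provided $f$ is Lipschitz on that interval. I will therefore interpret $(\mathcal H)$ for that variant as requiring the ranges of both functions to lie in the open interval where $f$ is locally Lipschitz. This is the main point to check, and I may need to state it explicitly rather than claim it is automatic.

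Finally, the connectedness and open–closed arguments in the proofs of Theorems \ref{thm:main2} and \ref{thm:main4} are purely topological and local. Because every constant is computed on some $\Omega'\Subset\Omega$, the conclusions hold for arbitrary subdomains exactly as before, which completes the proof.
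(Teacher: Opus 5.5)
Your proposal is correct and follows essentially the paper's route. The paper likewise reduces everything to the local inputs of Theorems \ref{thm:C1alpha}--\ref{thm:DSlin} and the two constants $c_0,c_1$, treating $f(u)<0$ by the reflection $\widetilde u=-u$, $\widetilde f(s)=-f(-s)$, which you mention as an alternative to taking absolute values in Lemma \ref{lem:cacc1}. Your caveat about the variant hypothesis $-\Lap{p}(u)-f(u)\le-\Lap{p}(v)-f(v)$ in Theorem \ref{thm:main4} is also legitimate and is left implicit in the paper: $f$ must be Lipschitz on an interval containing the ranges of both $u$ and $v$ on compact subsets.
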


\begin{proof}
Assume first $f(u)<0$ in $\Omega$ and set $\widetilde u:=-u$, $\widetilde f(s):=-f(-s)$. Then
$|D\widetilde u|=|Du|$, $Z_{\widetilde u}=Z_u$,
$\Delta_p\widetilde u=-\Delta_p u$, so that $-\Delta_p\widetilde u=\widetilde f(\widetilde u)$ with
$\widetilde f(\widetilde u)=-f(u)>0$ in $\Omega$; moreover $u\le v$ if and only if
$\widetilde v\le\widetilde u$, and the linearized operators at $u$ and at $\widetilde u$ coincide.
Hence we may and do assume $f(u)>0$ in $\Omega$.

It remains to observe where the hypotheses on $f$ and $u$ are used. In Section \ref{sec:morrey}
they enter only through
\begin{enumerate}[label=\rm(\roman*),leftmargin=2.2em]
\item the local $C^{1,\tau}$ regularity of Theorem \ref{thm:C1alpha}, which only requires
$f(u)\in L^\infty_{\rm loc}(\Omega)$;
\item the validity of the linearized equation \eqref{eq:linear-eq} and of the second-order estimate
of Theorem \ref{thm:DSlin}; these are established in \cite[Theorem 1.1 and (1.2)]{DS1}, and an
inspection of the proofs given there shows that the only property of $f$ which is used is its local
Lipschitz continuity on an interval containing $u(\overline{\Omega'})$, this is also the way in
which these results are applied in \cite{EFMS1,EFMS2}, where the solutions are not assumed to be
positive;
\item the two constants $c_0=\min_{\overline{\Omega'}}f(u)>0$ and
$c_1=\sup_{\overline{\Omega'}}|f'(u)|<\infty$, whose finiteness and positivity are exactly what
$(\mathcal H)$ provides on compact subsets.
\end{enumerate}
Sections \ref{sec:sobolev} and \ref{sec:proofs} use no property of $f$ beyond the local Lipschitz
continuity and the conclusion of Theorem \ref{thm:morrey}.
\end{proof}

\begin{remark}
Hypothesis $(\mathcal H)$ is precisely the assumption $(f_u)$--$(f_v)$ under which the strong
comparison principle is quoted in \cite[Theorem 2.4]{EFMS1} and \cite[Theorem 2.2]{EFMS2}, and it is
what makes the results below directly applicable to nonlinearities that change sign globally, as
long as they have a sign on the subdomain where the comparison is performed.
\end{remark}

\subsection{Gibbons' conjecture for the \texorpdfstring{$p$}{p}-Laplacian}
In \cite{EFMS1}, Esposito, Farina, Montoro and Sciunzi proved Gibbons' conjecture for
$-\Delta_pu=f(u)$ in $\R^N$ under the assumptions
\begin{itemize}[leftmargin=2.6em]
\item[$(h_f)$] $f\in C^1([-1,1])$, $f(-1)=f(1)=0$, $f'_+(-1)<0$, $f'_-(1)<0$, and
$\mathcal N_f:=\{t\in[-1,1]:\ f(t)=0\}$ is a finite set.
\end{itemize}
As far as we can see, the hypothesis $\frac{2N+2}{N+2}<p<2$ is invoked there only through
\cite[Theorems 2.4 and 2.6]{EFMS1}, which are verbatim \cite[Theorems 1.4 and 1.2]{DS2}, i.e. the
strong comparison principle and the strong maximum principle for the linearized operator, and in
the accompanying remark that $\rho=|Du|^{p-2}\in L^1$, which also holds precisely for
$p>\frac32$; the weak comparison principle in half-spaces proved in \cite[\S3]{EFMS1}, the other
main ingredient of that paper, carries no such restriction. Replacing those two theorems by our
Theorems \ref{thm:main4} and \ref{thm:main2} (in the form given by Proposition \ref{prop:H}) one
obtains the conclusion of \cite[Theorem 1.1]{EFMS1} for every $\frac32<p<2$.

In fact one can do better, and weaken $(h_f)$ at the same time. In \cite{LeFM} the author proved
Gibbons' conjecture, by a variant of the sliding method which avoids the linearized equation
altogether, under the following condition, in which the differentiability of $f$ at $\pm1$ is not
required:
\begin{itemize}[leftmargin=2.6em]
\item[$(\tilde h_f)$] $f:[-1,1]\to\R$ is Lipschitz continuous, $f(-1)=f(1)=0$, the set
$Z_f:=\{z\in(-1,1):\ f(z)=0\}$ is finite, and there exists $0<\delta<1$ such that $f$ is strictly
decreasing on $[-1,-1+\delta]$ and on $[1-\delta,1]$.
\end{itemize}
(Since $f$ is continuous and the two intervals are compact, $(\tilde h_f)$ is equivalent to the
quantitative form in which it is stated in \cite{LeFM}: for every $0<\eps<\delta$ the supremum of the
difference quotients $\frac{f(t_2)-f(t_1)}{t_2-t_1}$ over $t_2-t_1\ge\eps$ is negative.) The argument of \cite{LeFM} uses the restriction on $p$ only through the strong
comparison principle of \cite[Theorem 1.2]{MMS} and the strong maximum principle for the linearized
operator of \cite[Theorem 1.4]{Mon}, which are quoted there as Theorems 5 and 6; both are improved
to the range $\frac32<p<2$ by Theorem \ref{thm:firstorder} below (and, in the case $a\equiv0$,
already by Theorems \ref{thm:main4} and \ref{thm:main2}). We therefore obtain:

\begin{theorem}\label{thm:gibbons}
Let $N>1$ and $\frac32<p<2$, and let $f$ satisfy $(\tilde h_f)$. Let $u\in C^1(\R^N)$ be a weak
solution of $-\Delta_pu=f(u)$ in $\R^N$ with $|u|\le1$ and
\[
\lim_{y\to+\infty}u(x',y)=1,\qquad \lim_{y\to-\infty}u(x',y)=-1
\]
uniformly with respect to $x'\in\R^{N-1}$. Then $u$ depends only on $y$ and $\partial_yu>0$ in
$\R^N$.
\end{theorem}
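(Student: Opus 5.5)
The plan is to follow the argument of \cite{LeFM} step by step, and to replace the two ingredients that carry the restriction on $p$ by the new dimension-free versions. In \cite{LeFM} the exponent enters only through the strong comparison principle of \cite[Theorem 1.2]{MMS} and the strong maximum principle for the linearized operator of \cite[Theorem 1.4]{Mon} (Theorems 5 and 6 there). So the proof reduces to checking, at each point where these are invoked, that the hypotheses of our Theorems \ref{thm:main4} and \ref{thm:main2} hold, in the form given by Proposition \ref{prop:H}. If a first-order term appears, we use Theorem \ref{thm:firstorder} instead.

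\emph{Step 1: the sliding argument.}
\begin{enumerate}
\item Use the uniform limits and $(\tilde h_f)$ to show that $u_t(x',y):=u(x',y+t)\ge u$ in $\R^N$ for all $t$ large. Near $y=\pm\infty$, $u$ lies in the regions $[1-\delta,1]$ and $[-1,-1+\delta]$, where $f$ is strictly decreasing. There a weak comparison principle in half-spaces or strips applies; it has no restriction on $p$.
\item Set $t_0:=\inf\{t>0:\ u_s\ge u \text{ for all } s\ge t\}$ and argue by contradiction that $t_0=0$.
\item If $t_0>0$, take a sequence of points where $u_{t_0-\eps_n}<u$. Translate in $x'$ and use $C^{1,\tau}$ compactness (Theorem \ref{thm:C1alpha}) to pass to a limit solution $\bar u$ with $\bar u_{t_0}\ge\bar u$, touching at some point of a bounded strip.
\end{enumerate}

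\emph{Step 2: the touching point (the main obstacle).} At the touching point we must apply the strong comparison principle to $\bar u$ and $\bar u_{t_0}$. Theorem \ref{thm:main4} is stated under $(F)$, but $f$ changes sign, so we need the $(\mathcal H)$ form of Proposition \ref{prop:H}: $f(\bar u)$ must have constant sign on a connected neighbourhood $\Omega_0$ of the point. Since $Z_f$ is finite, we split into cases.
\begin{itemize}
\item If $\bar u$ at the touching point is not in $Z_f\cup\{\pm1\}$, continuity gives a small ball on which $f(\bar u)\ne0$ has constant sign.
\item If $\bar u$ takes a value in $Z_f$ there, we use the version of Theorem \ref{thm:main4} with hypothesis $-\Delta_p u-f(u)\le-\Delta_p v-f(v)$, exactly as in \cite{LeFM}, and check that the argument there only needs constant sign of $f(u)$ on the subdomain where comparison is performed.
\end{itemize}
I expect this to be the hardest point, because the Morrey estimate of Theorem \ref{thm:morrey} needs $c_0>0$. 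If a sign-degenerate set of positive measure appears, I would fall back on the route of \cite{LeFM}, which avoids comparing at zeros of $f$ by choosing the touching region inside $\{|u|>1-\delta\}$ or a set where $f(u)$ has a sign.

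\emph{Step 3: propagation and conclusion.}
\begin{enumerate}
\item Once $\bar u\equiv\bar u_{t_0}$ on a connected component, propagate to all of $\R^N$ by connectedness, as in Theorem \ref{thm:abstract}, Step 6. This contradicts the boundary limits when $t_0>0$. Hence $t_0=0$, so $u_t\ge u$ for all $t>0$ and $\partial_y u\ge0$.
\item Apply Theorem \ref{thm:main2}, again via Proposition \ref{prop:H}, to $\partial_yu$, which solves the linearized equation \eqref{eq:linear-eq}. Since $\partial_yu\not\equiv0$ because of the limits, we get $\partial_yu>0$; where needed we use the local sign argument of Step 2.
\item One-dimensional symmetry then follows, as in \cite{LeFM}, by repeating the sliding argument in the directions $(\nu',\nu_N)$ with $\nu_N>0$ and letting $\nu_N\to0$. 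This gives $Du\cdot(e,0)=0$ for every $e\in\R^{N-1}$.
\end{enumerate}
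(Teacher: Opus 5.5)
Your proposal follows the paper's argument: the paper obtains Theorem~\ref{thm:gibbons} solely by noting that the proof in \cite{LeFM} uses the restriction on $p$ only through \cite[Theorem 1.2]{MMS} and \cite[Theorem 1.4]{Mon}, which Theorem~\ref{thm:firstorder} (or, with no gradient term, Theorems~\ref{thm:main4} and \ref{thm:main2}) extends to $\frac32<p<2$ under the same hypotheses, so every invocation in \cite{LeFM} stays valid as written and your step-by-step reconstruction of the sliding argument is not needed. Where that reconstruction departs from \cite{LeFM} it does not work: the $-\Lap{p}(u)-f(u)\le-\Lap{p}(v)-f(v)$ form of Theorem~\ref{thm:main4} still requires $f(\bar u)$ to have a strict sign near the touching point (Lemma~\ref{lem:cacc1} uses $c_0>0$), and Proposition~\ref{prop:H} cannot be applied to $\partial_yu$ on all of $\R^N$, where $f(u)$ changes sign, so at points with $\bar u\in Z_f$ you must use the argument of \cite{LeFM} itself rather than a local sign argument.
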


\begin{remark}
Condition $(\tilde h_f)$ is strictly weaker than $(h_f)$. Indeed, if $f\in C^1$ near $\pm1$ with
$f'<0$ on $(-1,-1+\delta]\cup[1-\delta,1)$, then $(\tilde h_f)$ holds
\cite[Remark 2]{LeFM}; in particular $(h_f)$ implies $(\tilde h_f)$, whereas $(\tilde h_f)$ requires
neither the differentiability of $f$ at $\pm1$ nor, when the one-sided derivatives exist, that
$f'_+(-1)$ and $f'_-(1)$ be different from $0$. The assumption
$f'_+(-1)<0$, $f'_-(1)<0$ is essential in the approach of \cite{EFMS1}, where it is used to derive a
key estimate for the linearized equation, and this is precisely what the sliding method of
\cite{LeFM} dispenses with. A typical nonlinearity covered by $(\tilde h_f)$ but not by $(h_f)$ is
$f(u)=|u|^{r}u\,|1-u^2|^{s}(1-u^2)$ with $r,s>0$.
\end{remark}

\subsection{Monotonicity in half-spaces for changing-sign nonlinearities}
In \cite{EFMS2} the same authors proved the monotonicity, in the direction orthogonal to the
boundary, of positive solutions of $-\Delta_pu=f(u)$ in a half-space under zero Dirichlet boundary
conditions, for a genuinely changing-sign nonlinearity, under the assumptions
\begin{itemize}[leftmargin=2.6em]
\item[$(h_f')$] $f\in C^1([0,+\infty))$, $f(0)\ge0$, and
$\mathcal N_f:=\{t\in[0,+\infty):\ f(t)=0\}$ is a discrete set.
\end{itemize}
Again, the restriction $\frac{2N+2}{N+2}<p<2$ enters only through \cite[Theorems 2.2 and
2.3]{EFMS2}, which are \cite[Theorems 1.4 and 1.2]{DS2}, and through the requirement
$\rho\in L^1$. Consequently:

\begin{theorem}\label{thm:halfspace}
Let $N>1$, $\frac32<p<2$, and let $f$ satisfy $(h_f')$. Let
$u\in C^{1,\alpha}_{\rm loc}(\R^N_+)$, $\alpha\in(0,1)$, be a weak solution of
\[
-\Delta_pu=f(u)\ \text{ in }\R^N_+,\qquad u>0\ \text{ in }\R^N_+,\qquad u=0\ \text{ on }\partial\R^N_+,
\]
with $Du\in L^\infty(\Sigma_\lambda)$ for every strip
$\Sigma_\lambda=\R^{N-1}\times(0,\lambda)$. Then $u$ is monotone increasing with respect to the
$y$-direction, $\frac{\partial u}{\partial y}\ge0$ in $\R^N_+$, and
$\frac{\partial u}{\partial y}>0$ in $\R^N_+\setminus Z_{f(u)}$, where
$Z_{f(u)}:=\{x\in\R^N_+:\ u(x)\in\mathcal N_f\}$.
\end{theorem}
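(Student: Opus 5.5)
The plan is to follow the proof of \cite[Theorem 1.1]{EFMS2} verbatim, replacing only the tools restricted to $\frac{2N+2}{N+2}<p<2$. The skeleton is the moving plane method in the $y$-direction. First, the weak comparison principle in strips $\Sigma_\lambda$ of \cite{EFMS2} shows that $u\le u_\lambda$ in $\Sigma_\lambda$ for $\lambda$ small, where $u_\lambda(x',y)=u(x',2\lambda-y)$. This step uses only the bound $Du\in L^\infty(\Sigma_\lambda)$ and the local Lipschitz continuity of $f$, and carries no restriction on $p$. Next we set $\Lambda:=\sup\{\lambda>0:\ u\le u_\mu\text{ in }\Sigma_\mu\ \forall\mu\le\lambda\}$ and argue by contradiction that $\Lambda=+\infty$. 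The resulting inequality $u\le u_\lambda$ for every $\lambda$ gives $\partial_yu\ge0$.

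The restriction on $p$ enters at two points. The first is the passage from $u\le u_\Lambda$ to $u<u_\Lambda$ in $\Sigma_\Lambda\setminus Z_{f(u)}$, together with the compactness and translation argument that lets $\lambda$ move past $\Lambda$. There we invoke Theorem~\ref{thm:main4} in the form of Proposition~\ref{prop:H}. Take a connected component $\mathcal C$ of $\Sigma_\Lambda\setminus\{x:\ u(x)\in\mathcal N_f\}$. On it $f(u)$ has a constant sign, and the hypothesis $-\Delta_pu-f(u)\le-\Delta_pu_\Lambda-f(u_\Lambda)$ reduces to \eqref{eq:scp-hyp} with a suitable $\Lambda$ on compact subsets, as in the proof of Theorem~\ref{thm:main4}. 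Hence on $\mathcal C$ either $u\equiv u_\Lambda$ or $u<u_\Lambda$. The discreteness of $\mathcal N_f$ is used, exactly as in \cite{EFMS2}, to exclude $u\equiv u_\Lambda$. For translated limits $u_n(x',y)=u(x'+x'_n,y)$, the $C^{1,\alpha}$ bounds on strips give a limit profile that solves the same problem. We apply the same strong comparison to it, and also use $\rho\in L^1_{\rm loc}$, which by Proposition~\ref{prop:sharp} and Corollary~\ref{cor:summ} holds precisely when $p>\frac32$.

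The second point is strict monotonicity. Having $\partial_yu\ge0$ in $\R^N_+$, we apply Theorem~\ref{thm:main2}, again via Proposition~\ref{prop:H}, on each connected component of $\R^N_+\setminus Z_{f(u)}$. There $f(u)$ has a fixed sign and $\partial_yu$ solves the linearized equation by Theorem~\ref{thm:DSlin}. This gives, on each component, either $\partial_yu\equiv0$ or $\partial_yu>0$. The alternative $\partial_yu\equiv0$ is excluded as in \cite{EFMS2}, because $u$ would then be independent of $y$ on an open set, contradicting the monotone behaviour already established (or the boundary condition, if the component reaches $\partial\R^N_+$).

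I expect the main obstacle to be the uniformity of constants in the translation and compactness step. Proposition~\ref{prop:H} and Theorem~\ref{thm:morrey} are local and depend on $\min f(u)$ over compact sets. For translated solutions these lower bounds must be controlled uniformly, or else the strong comparison must be applied only to the limit profile, which itself satisfies $(\mathcal H)$ locally. The plan is to apply the principles only to fixed limit functions on fixed compact sets, so that no uniformity in $n$ is needed. This is how \cite{EFMS2} uses them, so the replacement of \cite[Theorems 1.2, 1.4]{DS2} by Theorems~\ref{thm:main2} and \ref{thm:main4} is then literal.
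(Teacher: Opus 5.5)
Your proposal is correct and matches the paper's own argument. The paper likewise keeps the proof of \cite{EFMS2} unchanged, replaces \cite[Theorems 1.4 and 1.2]{DS2} by Theorems~\ref{thm:main4} and \ref{thm:main2} in the form of Proposition~\ref{prop:H}, and obtains the needed local integrability of $\rho$ from Corollary~\ref{cor:summ} for $p>\frac32$. One small correction: on a component $\mathcal C$ of $\R^N_+\setminus Z_{f(u)}$, the alternative $\partial_y u\equiv0$ is excluded not by monotonicity but by following a vertical segment downward from a point of $\mathcal C$. Along it $u$ stays constant, positive and outside $\mathcal N_f$, and the segment must leave $\mathcal C$ either at $\partial\R^N_+$, where $u=0$, or at a point of $Z_{f(u)}$, where $u\in\mathcal N_f$; either case is a contradiction.
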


\subsection{Equations with a first-order term}
The Harnack comparison inequality and the strong comparison principle of Merch\'an, Montoro and
Sciunzi \cite[Theorems 1.1--1.2]{MMS} concern
\[
-\Delta_pu+a(x,u)|Du|^{q}=f(x,u),\qquad q\ge\max\{p-1,1\},
\]
and are proved by exactly the scheme discussed in this paper; there the restriction
$\frac{2N+2}{N+2}<p<2$ is used at a single point, namely in Case~(b) of the proof of
\cite[Theorem 3.1]{MMS}, in order to guarantee that $\rho\in L^{t}$ with $t>N/2$. The strong
comparison and strong maximum principles of \cite[Theorems 1--3]{Le}, which deal with the same
equation and a sign-changing $f$, inherit the restriction \emph{only} from
\cite[Theorem 1.2]{MMS} and from the strong maximum principle for the linearized operator of
\cite[Theorem 1.4]{Mon}, the latter being obtained by the same scheme. Our Theorem \ref{thm:abstract} replaces that requirement by
$(\mathcal M_\kappa)$, so the exponent can be lowered to $\frac32$ as soon as the analogue of
Theorem \ref{thm:morrey} is available in that setting. The latter follows by the very same
dichotomy: the second-order estimate is \cite[Theorem 2.1]{MMS}, whose proof is carried out with a
cut-off function and therefore delivers at once the localized form of Lemma \ref{lem:cacc2} that we
need; whereas in the proof of Lemma \ref{lem:cacc1} the first-order term contributes, after testing
with $\psi^2g_\eps$, the additional summand
\[
\int_{\Omega'}|a(x,u)|\,|Du|^{q}\psi^2g_\eps\,dx\ \le\ K\int_{\Omega'}\psi^2|Du|^{q-\sigma}dx
\ \le\ K\,S_r^{\,q-\sigma}\,|B_{2r}| ,
\]
which is harmless because $q\ge\max\{p-1,1\}>\sigma$, so that this term is bounded by $Cr^{N}$ in
Case~1 and by $Cr^{N+\tau(q-\sigma)}$ in Case~2 of the proof of Theorem \ref{thm:morrey}; both
exponents exceed $N-2$. We omit the remaining (routine) details and simply record the outcome:

\begin{theorem}\label{thm:firstorder}
The Harnack comparison inequality and the strong comparison principle of
\cite[Theorems 1.1--1.2]{MMS}, the strong maximum principle of \cite[Theorem 1.4]{Mon}, and the
strong comparison and strong maximum principles of \cite[Theorems 1--3]{Le}, all hold in the range
$\frac32<p<2$ in place of $\frac{2N+2}{N+2}<p<2$.
\end{theorem}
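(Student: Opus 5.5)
The plan is to revisit each of the cited results, MMS Theorems 1.1--1.2, Mon Theorem 1.4 and Le Theorems 1--3, and show that the restriction $\frac{2N+2}{N+2}<p<2$ enters all of them at a single point. That point is the Moser iteration in the singular case, where $\rho\in L^t$ with $t>N/2$ is used to absorb the cut-off term. We then replace that step by Theorem \ref{thm:abstract}, i.e. by Moser's iteration in the measure $\mu=\rho\,dx$. The argument therefore reduces to two tasks. First, derive the Caccioppoli inequalities \eqref{eq:cacc-comp}, \eqref{eq:cacc-log} and \eqref{eq:cacc-log-beta} for the equation with the first-order term. Second, verify $(\mathcal M_\kappa)$ for $\rho=|Du|^{p-2}$, and for $(|Du|+|Dv|)^{p-2}$, when $\frac32<p<2$. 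Once both are in hand, Steps 1--6 of the proof of Theorem \ref{thm:abstract} and the argument of \S\ref{subsec:linearized} run verbatim. They give the weak Harnack comparison inequality, the Harnack comparison inequality, the strong comparison principle and the strong maximum principle for the linearized operator.

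For the Caccioppoli step, test the inequality for $w=v-u$ with $\eta^2w_\varsigma^\beta$, exactly as in Lemma \ref{lem:cacc-comp}. The new contribution is $\int\eta^2w_\varsigma^\beta\,[a(x,u)|Du|^q-a(x,v)|Dv|^q]$. We split it as $[a(x,u)-a(x,v)]|Du|^q$ plus $a(x,v)\,[|Du|^q-|Dv|^q]$. The first piece is bounded by $C w_\varsigma^{\beta+1}$, using the local Lipschitz continuity of $a$ in $u$. The second is bounded by $C|Dw|\,w_\varsigma^\beta$, because $q\ge1$ and the gradients are bounded. This last term is handled by Young's inequality using $\rho\ge\lambda_0$:
\[
\eta^2w_\varsigma^\beta|Dw|\le\epsilon\,\rho\,\eta^2w_\varsigma^{\beta-1}|Dw|^2+C_\epsilon\lambda_0^{-1}\eta^2w_\varsigma^{\beta+1}.
\]
The second term on the right is of the admissible form $\widetilde w^{\,2}\eta^2$. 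This is exactly MMS (3.4)--(3.15), so I take it from there. The same computation works for the linearized operator of Mon and Le, where the extra drift coefficient is bounded.

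The main obstacle is the analogue of Theorem \ref{thm:morrey} in this setting, and I would follow its proof step by step. Lemma \ref{lem:cacc2} is replaced by the second-order estimate of MMS Theorem 2.1. I would check that its proof, which already uses cut-offs, yields
\[
\int\psi^2|Du|^{p-2-\beta}\|D^2u\|^2\le C\Big[\int|D\psi|^2|Du|^{p-\beta}+\int\psi^2|Du|^{2-\beta}\Big],
\]
possibly with an additional term $\int\psi^2|Du|^{2q-p-\beta+\dots}$ that is bounded by $Cr^N$. In Lemma \ref{lem:cacc1} the additional summand is $K\int\psi^2|Du|^{q-\sigma}\le KS_r^{q-\sigma}|B_{2r}|$, which is harmless since $q>\sigma$. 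Positivity of $f(x,u)$ on compacts supplies $c_0$, after the sign reduction of Proposition \ref{prop:H} where $f$ changes sign, as in Le. The dichotomy $|Du(x_0)|\gtrless2L(4r)^\tau$ then gives $J(r)\le Cr^{N-2+\kappa}$ uniformly in $x_0$. With $\sigma=2-p<p-1$ this produces $(\mathcal M_\kappa)$ as in Proposition \ref{prop:verify}.

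The remaining delicate point is the $C^{1,\tau}$ regularity with a gradient term. This holds since the lower-order term is bounded in $L^\infty_{\rm loc}$. For Mon and Le, one also needs the linearized equation together with the weighted second-order estimate, which are proved there for all $p>1$. Given these, Theorem \ref{thm:sobolev} applies and the conclusions follow for $\frac32<p<2$.
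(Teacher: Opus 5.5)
Your proposal is correct and follows essentially the paper's route. The single use of $\rho\in L^t$, $t>N/2$, in the Moser iteration is replaced by Theorem~\ref{thm:abstract}, and $(\mathcal M_\kappa)$ is verified by the same dichotomy as in Theorem~\ref{thm:morrey}, with \cite[Theorem 2.1]{MMS} supplying the localized second-order estimate and the gradient term adding only the harmless summand $K\int\psi^2|Du|^{q-\sigma}\le K S_r^{q-\sigma}|B_{2r}|$ in Lemma~\ref{lem:cacc1}. Your explicit handling of the first-order term in the Caccioppoli inequalities (Lipschitz splitting, then Young's inequality with $\rho\ge\lambda_0$) spells out details that the paper leaves as routine.
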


\begin{remark}
The list above is not exhaustive: the same substitution can be performed in every result whose only
use of \cite[Theorems 1.2 and 1.4]{DS2} is as a black box. We also stress the converse reading of
Proposition \ref{prop:sharp}: since the weight $|Du|^{p-2}$ need not be locally integrable for
$1<p\le\frac32$, the value $\frac32$ is where all these statements will have to stop, unless the
linearization in $H^{1,2}_{\rho}$ is abandoned.
\end{remark}

%%%%%%%%%%%%%%%%%%%%%%%%%%%%%%%%%%%%%%%%%%%%%%%%%%%%%%%%%%%%%%%%%%%%%%%%%%%%%%%
\section{Optimality of the threshold}\label{sec:remarks}
%%%%%%%%%%%%%%%%%%%%%%%%%%%%%%%%%%%%%%%%%%%%%%%%%%%%%%%%%%%%%%%%%%%%%%%%%%%%%%%

\begin{proof}[Proof of Proposition \ref{prop:sharp}]
Let $1<p<2$, $p'=\frac{p}{p-1}>2$, and let $u$ be as in \eqref{eq:example}. Then
\[
Du(x)=-|x_1|^{\frac{1}{p-1}}\operatorname{sgn}(x_1)\,e_1 ,
\]
which is continuous since $\frac1{p-1}>0$; hence $u\in C^1(\R^N)$ and
$|Du(x)|=|x_1|^{\frac1{p-1}}$. Moreover
\[
|Du|^{p-2}Du=-|x_1|^{\frac{p-2}{p-1}}|x_1|^{\frac{1}{p-1}}\operatorname{sgn}(x_1)e_1=-x_1e_1 ,
\]
so that $\Lap{p}(u)=\operatorname{div}(-x_1e_1)=-1$, i.e. $-\Lap{p}(u)=1$. Thus $u$ solves
\eqref{eq:main} with $f\equiv1$, which satisfies $(F)$, on any ball $B$ where $u>0$; such balls exist
for every $c>0$ and, for $c$ large, $B$ can be taken of any prescribed radius. Clearly
$Z_u=\{x_1=0\}$.

(i) For a ball $B\subset\{|x_1|<a\}$ meeting $\{x_1=0\}$,
\[
\int_B|Du|^{-\sigma}dx=\int_B|x_1|^{-\frac{\sigma}{p-1}}dx ,
\]
which is finite if and only if $\frac{\sigma}{p-1}<1$, i.e. $\sigma<p-1$. Recalling that
\eqref{eq:DSsummability} asserts the finiteness of $\int|Du|^{-(p-1)r}$ for every $r<1$, this shows
that the exponent $p-1$ therein cannot be increased.

(ii) Here $\rho=|Du|^{p-2}=|x_1|^{-\frac{2-p}{p-1}}$, and $\rho\in L^1(B)$ if and only if
$\frac{2-p}{p-1}<1$, i.e. $2-p<p-1$, i.e. $p>\frac32$.

(iii) Let $\theta=\frac{2-p}{p-1}$ and $x_0\in Z_u\cap B$. By Fubini's theorem,
\[
\int_{B_r(x_0)}\rho\,dx\simeq r^{N-1}\int_{-r}^{r}|t|^{-\theta}\,dt ,
\]
which is finite exactly when $\theta<1$, i.e. $p>\frac32$, in agreement with (ii); in that case it
equals $\simeq r^{N-\theta}=r^{\,N-2+(2-\theta)}$ with $2-\theta>1$, so that $(\mathcal M_\kappa)$
holds with $\kappa=2-\theta$. Note that the Morrey requirement alone would only ask $\theta<2$,
i.e. $p>\frac43$; it is the integrability of $\rho$, equivalently $\theta<1$, that is binding
here, because $Z_u$ is a hypersurface.
\end{proof}

\begin{proof}[Proof of Proposition \ref{prop:annulus}]
Existence and uniqueness of $u\in W^{1,p}_0(\Omega)$ follow from the strict monotonicity of
$-\Lap{p}$, and $u\in C^{1,\alpha}(\overline\Omega)$ by \cite{Lie}. Since $\Omega$ and the equation
are invariant under rotations, $u\circ R$ solves \eqref{eq:annulus} for every rotation $R$, so
uniqueness forces $u$ to be radial; we write $u=u(r)$, $r=|x|$.

In radial coordinates \eqref{eq:annulus} reads $-\big(r^{N-1}|u'|^{p-2}u'\big)'=r^{N-1}$, hence
there is a constant, which we write as $r_0^N/N$ with $r_0>0$, such that
\begin{equation}\label{eq:annulus-ode}
\Phi\big(u'(r)\big)=g_{r_0}(r):=\frac{r_0^{N}-r^{N}}{N\,r^{N-1}},\qquad
\Phi(s):=|s|^{p-2}s ,
\end{equation}
so that $u'(r)=\Phi^{-1}(g_{r_0}(r))$ with $\Phi^{-1}(y)=|y|^{\frac{2-p}{p-1}}y$ continuous and
strictly increasing. For fixed $r$ the map $r_0\mapsto g_{r_0}(r)$ is strictly increasing, hence so
is $r_0\mapsto\int_a^b\Phi^{-1}(g_{r_0}(r))\,dr$; this integral is negative for $r_0=a$ (where
$g_a<0$ on $(a,b]$) and positive for $r_0=b$ (where $g_b>0$ on $[a,b)$). Therefore there is exactly
one $r_0\in(a,b)$ for which $\int_a^bu'=0$, i.e. for which $u(a)=u(b)=0$, and this is the value
occurring in \eqref{eq:annulus-ode}.

Since $g_{r_0}$ is strictly decreasing on $(0,\infty)$ and vanishes only at $r_0$, we get $u'>0$ on
$(a,r_0)$ and $u'<0$ on $(r_0,b)$; together with $u(a)=u(b)=0$ this gives $u>0$ in $\Omega$, and
\[
Z_u=\{r=r_0\},\qquad
\inf\big\{|Du(x)|:\ \big||x|-r_0\big|\ge\delta\big\}>0\quad\text{for every }\delta>0 .
\]
Moreover $g_{r_0}(r_0)=0$ and $g_{r_0}'(r_0)=-1$, whence $|g_{r_0}(r)|/|r-r_0|\to1$ as $r\to r_0$
and therefore
\[
\frac{|Du(x)|}{\big||x|-r_0\big|^{\frac1{p-1}}}
=\Big(\frac{|g_{r_0}(r)|}{|r-r_0|}\Big)^{\frac1{p-1}}\longrightarrow 1 .
\]
Finally,
\[
\int_\Omega|Du|^{p-2}dx=N\omega_N\int_a^b|u'(r)|^{p-2}r^{N-1}dr ,
\]
whose integrand is bounded away from $r_0$ and comparable with $|r-r_0|^{-\theta}$ near $r_0$; it is
therefore finite if and only if $\theta<1$, i.e. $p>\frac32$. The local statements at a point
$x_0\in Z_u$ follow exactly as in part (iii) of the proof of Proposition \ref{prop:sharp}, with the
sphere $\{|x|=r_0\}$ in place of the hyperplane $\{x_1=0\}$.
\end{proof}

\begin{remark}[Why $\frac32$ is a genuine barrier]\label{rem:barrier}
Every treatment of the strong comparison principle across the critical set known to us, namely
\cite{DS2,Sci2,MMS,Mon,Le} and the present paper, proceeds by linearizing and working in the
weighted space $H^{1,2}_{\rho}$ with $\rho=|Du|^{p-2}$ (or $(|Du|+|Dv|)^{p-2}$). By Propositions
\ref{prop:sharp}(ii) and \ref{prop:annulus}, when $1<p\le\frac32$ this space is not even defined for
all solutions: the
weight fails to be locally integrable, so that $\mu=\rho\,dx$ is not a Radon measure, no Sobolev
inequality of the form \eqref{eq:sobolev} can hold, and the natural energy
$\int\rho|D\varphi|^2$ assigns infinite mass to constants. Observe that the failure is caused by the
codimension-one geometry of $Z_u$: it is the fact that the critical set can be a hypersurface, along
which $|Du|$ degenerates at the maximal admissible rate $\dist(\cdot,Z_u)^{\frac1{p-1}}$, that
saturates the summability exponent in \eqref{eq:DSsummability}. This is precisely the phenomenon
recorded in \cite[Example 5.2]{Sci2}, and it is explicitly noted in \cite{MMS} that
the weight need not belong to $L^1$ when $p$ is close to $1$. Consequently, the value
$\frac32$ obtained in Theorems \ref{thm:main1}--\ref{thm:main3} is optimal within the
weighted-linearization framework, and any progress below it must come from a genuinely different
idea.

It should be stressed that this obstruction is not confined to the local theory. The solution
\eqref{eq:example} is only a local solution of \eqref{eq:main}; but Proposition \ref{prop:annulus}
exhibits the same failure for the Dirichlet problem \eqref{eq:dirichlet} on a bounded smooth
domain, with the simplest possible nonlinearity $f\equiv1$, the critical set being a smooth
hypersurface along which $|Du|$ degenerates at the maximal admissible rate. One caveat remains:
condition $(\mathcal M_\kappa)$ is a
hypothesis on the particular pair $(u,v)$ under consideration; Theorem \ref{thm:abstract} therefore
applies, for any $p>1$, to every solution whose critical set is small enough in the sense of Remark
\ref{rem:minkowski}.
\end{remark}

\begin{remark}[Some observations on the range $1<p\le\frac32$]
Let $w=v-u\ge0$ and $F:=|Dv|^{p-2}Dv-|Du|^{p-2}Du$, so that $-\operatorname{div}F+\Lambda w\ge0$
weakly. For $1<p<2$ and $Du,Dv$ bounded, \eqref{eq:vector} gives the \emph{uniform} lower ellipticity
\[
(F,Dw)\ \ge\ \hat C\,(2M)^{p-2}|Dw|^2 ,
\]
that is, the linearized operator is uniformly elliptic \emph{from below}; the whole difficulty comes
from the upper bound $|F|\le\check C\rho|Dw|$ with $\rho$ unbounded. One also has the H\"older-type
bound $|F|\le C_p|Dw|^{p-1}$, which is free of $\rho$; testing with $\eta^2w^{\beta}$ and using it
leads, after Young's inequality with exponents $\frac{2}{p-1},\frac{2}{3-p}$, to
\[
\int\eta^2w^{\beta-1}|Dw|^2\ \lesssim\ \int w^{\beta+\frac{p-1}{3-p}}\eta^{\frac{2(2-p)}{3-p}}|D\eta|^{\frac{2}{3-p}}
+\int\eta^2w^{\beta+1},
\]
in which no weight appears; unfortunately $\frac{p-1}{3-p}<1$ for $p<2$, so the powers of $w$ on the
two sides do not match and Moser's scheme degenerates into a Serrin-type Harnack inequality with an
additive constant, which is not sufficient for the strong comparison principle. Whether Theorem
\ref{thm:main4} holds for $1<p\le\frac32$ appears to be open; no counterexample is known, and none is
provided by the explicit solutions of Propositions \ref{prop:sharp} and \ref{prop:annulus}.
\end{remark}

\begin{remark}[Further extensions]
Beyond the applications collected in Section \ref{sec:applications}, the arguments of Sections
\ref{sec:morrey}--\ref{sec:sobolev} are purely local and use only the $C^{1,\tau}$ regularity of the
solution, the validity of the linearized equation \eqref{eq:linear-eq}, and a positive lower bound
for the source term on compact subsets. They therefore apply, with the same threshold
$p>\frac32$, to source terms $f=f(x,u)$ that have a sign and are locally Lipschitz in $u$
uniformly in $x$, in particular to H\'enon-type nonlinearities $f=|x|^{s}g(u)$, $s\ge0$, and,
in the vanishing-source setting of \cite{Sci2}, whenever the summability estimate
\cite[Theorem 3.1]{Sci2} is available: the only property of that estimate which we use is the
exponent $\sigma<p-1$, and the dichotomy of Step 2 in the proof of Theorem \ref{thm:morrey} is
insensitive to the structure of $f$ beyond \eqref{eq:cacc1}--\eqref{eq:cacc2}.
\end{remark}

%%%%%%%%%%%%%%%%%%%%%%%%%%%%%%%%%%%%%%%%%%%%%%%%%%%%%%%%%%%%%%%%%%%%%%%%%%%%%%%
\section*{Statements and Declarations}
%%%%%%%%%%%%%%%%%%%%%%%%%%%%%%%%%%%%%%%%%%%%%%%%%%%%%%%%%%%%%%%%%%%%%%%%%%%%%%%
%\noindent\textbf{Funding.} This research is funded by ...
\smallskip

\noindent\textbf{Conflict of interest.} The author declares that he has no
conflict of interest.

\smallskip

\noindent\textbf{Data availability.} Data sharing is not applicable to this
article, as no datasets were generated or analysed during the current study.

\smallskip

\noindent\textbf{Use of AI tools.} AI-assisted tools were used in developing and drafting this
work; the author has verified its contents and takes full responsibility.

\end{document}